\newtheorem{corollary}{Corollary}
\newtheorem{theorem}{Theorem}
\newtheorem{lemma}{Lemma}
\numberwithin{lemma}{section}
\numberwithin{claim}{section}
\numberwithin{definition}{section}
\numberwithin{proposition}{section}
\numberwithin{equation}{section}
\def\N{\mathbb{N}}
\def\Z{\mathbb{Z}}
\def\C{\mathbb{C}}
\def\R{\mathbb{R}}
\def\<={\leq}
\def\>={\geq}
\def\inv{^{-1}}
\def\bd{\partial}
\def\l{\ell}
\newcommand{\set}[1]{\left\lbrace#1\right\rbrace}
\title{Billiard tables with rotational symmetry}
\author[1]{Misha Bialy\thanks{School of Mathematical Sciences,
    Tel Aviv University, Tel Aviv 69978, Israel.
    \textit{E-mail}: \href{mailto:bialy@tauex.tau.ac.il}{\texttt{bialy@tauex.tau.ac.il}}}\hspace{2mm}  and Daniel Tsodikovich\thanks{   School of Mathematical Sciences,
   Tel Aviv University, Tel Aviv 69978, Israel.
    \textit{E-mail}: \href{mailto:tsodikovich@tauex.tau.ac.il}{\texttt{tsodikovich@tauex.tau.ac.il}}}}
\date{}
\begin{document}
\maketitle
\begin{abstract}
We generalize the following simple geometric fact: the only centrally symmetric convex curve of constant width is a circle.
 Billiard interpretation of the condition of constant width  reads: 
   a planar curve has constant width, if and only if, the Birkhoff billiard map  inside the planar curve has a rotational invariant curve of $2$-periodic orbits.
   We generalize this statement to curves that are invariant under a rotation by angle $\frac{2\pi}{k}$,  for which the billiard map has a rotational invariant curve of $k$-periodic orbits.
    Similar result holds true also for Outer billiards and Symplectic billiards. 
    Finally, we consider  Minkowski billiards inside a unit disc of Minkowski (not necessarily symmetric) norm which is invariant under a linear map of order $k\ge 3$.
    We find a criterion for the existence of an invariant curve of $k$-periodic orbits.
     As an application, we get rigidity results for all those billiards.
\end{abstract}

\section{Introduction and Main Results}\label{section intro}
\subsection{Background} \label{subsect bg}
Mathematical billiards are fascinating examples of conservative dynamical systems. 
 Billiard systems, in many cases, provide examples of twist symplectic maps of cylinders.
Many important results are known for twist maps and will be used heavily in this paper, including Birkhoff's theorem on rotational invariant curves and Aubry-Mather variational theory (see \cite{AST_1986__144__1_0},\cite{Bangert1988MatherSF},\cite{gole2001symplectic} for relevant background). 
However, billiards form a much more rigid subclass of twist maps, where the geometry of the table plays a crucial additional role.
In this Subsection we will recall the definitions of exact twist maps, and Birkhoff, Outer, Symplectic and Minkowski billiard systems, and in particular how they are realized as exact twist maps of cylinders.
\begin{enumerate}
\item \textbf{Exact Twist Map of cylinders}: Let $\mathcal{A}=S^1\times\R$ be a cylinder. 
An area preserving diffeomorphism $T:\mathcal{A}\to\mathcal{A}, T(q,p)=(Q(q,p),P(q,p))$ is called a twist map if $\frac{\partial Q}{\partial p}$ never vanishes. 
The fact that $T$ is area preserving implies that the 1-form $PdQ - pdq$ is closed.
The map $T$ is called an exact twist map if this one form is exact.
In this case, there exists a smooth function $S$, called generating function, such that $dS=PdQ-pdq$.
The twist condition implies that function $S$ can be considered as a function of the variables $(q,Q)$ and the mixed second order partial derivative of $S$ does not vanish.
Conversely, for such $S$, the relation $PdQ-pdq=dS$ defines a twist map the generating function of which is $S$. It is important that the sequences $\{q_n\}$ corresponding to the orbits of the twist map, $(q_n,p_n):=T^n(q_0,p_0)$, are extremals of the variational principle $\sum_{n}S(q_n, q_{n+1})$.
\item \textbf{Birkhoff Billiards}: This model was introduced and studied extensively by G.D.Birkhoff.
A point mass moves in a straight line inside a convex planar domain with smooth boundary $\gamma$. 
When hitting the boundary, it reflects according to the classical law of geometric optics: the angle of incidence is equal to the angle of return.
The phase cylinder of the Birkhoff billiard system is the space of oriented lines intersecting $\gamma$, and it can be parametrized the following way: if $s$ is the arc length parameter on $\gamma$, then every oriented line that intersects $\gamma$ is uniquely determined by the pair $(s,\alpha)$, where $s$ is the arc length parameter of the first intersection point of the line with $\gamma$, and $\alpha$ is the angle between the line and $\dot{\gamma}(s)$. 
The Birkhoff billiard map is the twist map that corresponds to the generating function $L(s,s')=|\gamma(s)-\gamma(s')|$.
See Figure \ref{fig: birkBilliard}.
   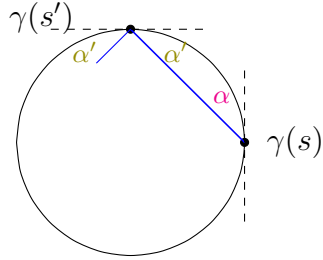
\begin{figure}
        \centering
       \begin{tikzpicture}[scale = 1.5]
   \draw[domain = -180:180, smooth, variable = \t, black] plot({cos(\t)},{sin(\t)});
   \tkzDefPoint(1,0){A};
   \tkzDefPoint(0,1){B};
   \tkzDrawPoints(A,B);
   \tkzDrawSegment[blue](A,B);
   \draw[dashed](1,-0.7)--(1,0.7);
   \draw[dashed](-0.7,1)--(0.7,1);
   \draw[blue](0,1)--(-0.3,0.7);
   \node[right] at (1.1,0) {$\gamma(s)$};
   \node[left] at (-0.4,1.1) {$\gamma(s')$};
   \node[magenta] at (0.8, 0.4) {\footnotesize{$\alpha$}};
   \node[olive] at (0.4,0.8) {\footnotesize{$\alpha'$}};
   \node[olive] at (-0.4,0.8) {\footnotesize{$\alpha'$}};
       \end{tikzpicture}
       \caption{Birkhoff billiard map.\label{fig: birkBilliard}}
    \end{figure}
\item \textbf{Outer Billiards}: Outer billiards were introduced by B. Neumann, and popularized by Moser as a toy model for planetary motion. Unlike Birkhoff billiards, the Outer billiard map acts on the exterior of a closed convex curve.
It is defined as follows (see, e.g., \cite{tabachnikov2005geometry}, \cite{BoylandOuterBilliardImpactOscilators}): given a smooth strictly convex curve $\gamma$ and a point $x$ in the exterior of $\gamma$, consider the two tangents from $x$ to $\gamma$.
Pick the right tangency point from the point of view of $x$, and reflect $x$ through this tangency point. 
Let $t$ be any parameter of $\gamma$.
If $x$ is a point in the exterior of $\gamma$ then there are unique $t,t_1$, and $\lambda,\lambda_1>0$ for which $x=\gamma(t)+\lambda\dot{\gamma}(t)=\gamma(t_1)-\lambda_1\dot{\gamma}(t_1)$. 
The outer billiard map acts on the (infinite) cylinder that has the coordinates $(t,\lambda)$, by $T(t,\lambda)=(t_1,\lambda_1)$.
See Figure \ref{fig OuterBilliardReflectionAsterisk}.
The outer billiard map is a twist map of this cylinder, that corresponds to the following generating function:
\[S(t,t_1)=\mathrm{Area}(\mathrm{conv}(\gamma\cup\set{x})) \,,\]
where $x$ is the intersection point of the tangents to $\gamma$ at $t,t_1$.
\begin{figure}
\begin{center}
\begin{tikzpicture}[scale = 2]
\draw[domain = -60:100, smooth, variable = \t, black] plot({cos(\t)},{0.3*cos(\t)+sin(\t)});
\tkzDefPoint(0.70710,-0.49497){A};
\tkzDefPoint(0.70710,0.91923){B};
\tkzDrawPoints(A,B);
\tkzDefPoint(1.41421,0.42426){C};
\tkzDrawPoint(C);
\tkzDrawSegment[dashed](A,C);
\tkzDrawSegment(B,C);
\tkzDefPoint(0,1.41421){D};
\tkzDrawSegment[blue](C,D);
\tkzDrawPoint(D);
\node[left] at (0.6,-0.49) {$\gamma(t)$};
\node[right] at (1,-0.1) {$\lambda$};
\node[right,magenta] at (1.414,0.424) {$(t,\lambda)$};
\node[below] at (0.65,0.9) {$\gamma(t_1)$};
\node[above,olive] at (1.1,0.65) {$\lambda_1$};
\node[right,olive] at (0.4,1.2) {$\lambda_1$};
\node[above,magenta] at (0,1.45) {$(t_1,\lambda_1)=T(t,\lambda)$};
\end{tikzpicture}
\end{center}
\caption{Outer billiard reflection law.}\label{fig OuterBilliardReflectionAsterisk}
\end{figure}
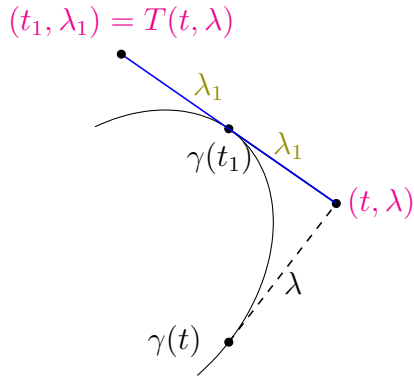
\item \textbf{Symplectic billiards}: Symplectic billiards were introduced by Albers and Tabachnikov, see \cite{albers2017introducing}.
Geometrically, this system is defined as follows: three points on a smooth convex curve $\gamma$ are three consecutive points of a symplectic billiard orbit if and only if the tangent to $\gamma$ at the second point is parallel to the segment connecting the first and the third points. 
See Figure \ref{fig SympBilliardReflectionAsterisk}.
    \begin{figure}
  \begin{center}
  \begin{tikzpicture}[scale = 2.5]
  \tikzset{
arr/.style={postaction=decorate,
decoration={markings,
mark=at position .7 with {\arrow[thick]{#1}}
}}}
  \draw[domain = -60:100, smooth, variable = \t, black] plot({cos(\t)},{0.3*cos(\t)+sin(\t)});
\tkzDefPoint(0.70710,-0.49497){A};
\tkzDefPoint(0.70710,0.91923){B};
\tkzDefPoint(1,0.3){C};
\tkzDrawPoints(A,B,C);
\tkzDrawSegment[olive](A,C);
\tkzDrawSegment[olive](B,C);
\tkzDrawSegment[dashed,blue](A,B);
\tkzDefPoint(1,-0.1){D};
\tkzDefPoint(1,0.9){E};
\tkzDrawSegment[arr=stealth,blue](D,E);
\node[left] at (0.707,-0.5) {$\gamma(t_1)$};
\node[left] at (0.707, 0.9) {$\gamma(t_3)$};
\node[right] at (1,0.35) {$\gamma(t_2)$};
  \end{tikzpicture}
  \caption{Symplectic billiards reflection law.}\label{fig SympBilliardReflectionAsterisk}
  \end{center}
  \end{figure}
This map can also be described as a twist map in the following way: given any parametrization $\gamma(t)$ of $\gamma$, the symplectic billiard map is the one that corresponds to the generating function $S(t,t_1)=[\gamma(t),\gamma(t_1)]$, where $[\cdot,\cdot]$ denotes the determinant of two vectors in $\R^2$. 
\item \textbf{Minkowski Billiards}:  Minkowski  and, more generally, Finsler billiards were introduced and studied by Gutkin and Tabachnikov in \cite{GUTKIN2002277}.
 Our motivation comes from the recent work by Artstein-Avidan, Karasev, and Ostrover \cite{Artstein_Avidan_2014} on Mahler's conjecture.
 Consider a smooth convex curve $\gamma$, and a Minkowski (not necessarily symmetric) norm $N$.
 The Minkowski billiard system in $\gamma$ with respect to the norm $N$ is defined as a twist map that corresponds to the generating function \[L(t,t_1)=N(\gamma(t_1)-\gamma(t))\,,\]
 where $\gamma$ is parametrized by $N$-arc length parameter (meaning, $N(\dot{\gamma}(t))=1$).
Thus, one sees that the Birkhoff billiard system is a special case of the Minkowski billiard system where one chooses $N$ to be the Euclidean norm.
In this paper we will work with a specific Minkowski norm, which is the norm determined by $\gamma$ itself. 
If $K$ is the interior of $\gamma$, then we will choose $N$ to be the gauge function of $K$:
\[N(x)=g_K(x)=\inf\set{r>0\mid x\in rK} \,.\] 
\end{enumerate}
\subsection{Formulation of The Results} \label{subsect Formulation}
Our motivation comes from recent progress towards Birkhoff conjecture for Birkhoff billiards, see for example \cite{KaloshinSorrentinoOnIntegrability}, and especially \cite{bialy2020birkhoffporitsky}, where Birkhoff conjecture was proved for centrally symmetric curves. In this paper we analyze billiard tables that have higher order symmetry.
Namely, we consider billiard tables with rotational symmetry of order $k\ge 3$.
We show that among such tables, the only ones that have a rotational invariant curve that consists of $k$-periodic orbits are circles (an exact formulation will appear below).
Note that this claim does not assume foliation by invariant curves, but assumes the existence of a specific invariant curve, the one consisting of $k$-periodic orbits.
This result is a generalization of the following simple geometric fact:
a centrally symmetric planar convex domain of constant width must be a disc.
This is a generalization because a centrally symmetric convex domain is just one that is invariant under a rotation of order two, and a convex domain of constant width is one in which the billiard ball map has a rotational invariant curve of 2-periodic orbits.
Moreover, we will study a similar setting for other billiard systems - Outer billiards (see, e.g., \cite{BoylandOuterBilliardImpactOscilators},\cite{tabachnikov2005geometry}), Symplectic billiards (see, e.g., \cite{albers2017introducing}), and Minkowski billiards (see, e.g., \cite{GUTKIN2002277}).
In the first two cases we will obtain analogous results, but in the last case we will obtain a slightly different result.
Since these systems are invariant under all affine transformations, we assume in these settings that the billiard table is invariant under a (finite) order $k\ge 3$ element of $\textrm{GL}(2,\R)$.
In the first two cases, we show that if for such a domain there exists a rotational invariant curve of $k$-periodic orbits, then it must be an ellipse, and in the last case we find a complete characterization of the domains for which such rotational invariant curve exists.
It is an elementary fact in linear algebra that an order $k\ge 3$ element in $\textrm{GL}(2,\R)$ is conjugate to a rotation by angle $\frac{2\pi m}{k}$, where $m$ and $k$ are coprime.
For such $k,m$, any set that is invariant under a rotation by $\frac{2\pi m}{k}$ is also invariant under a rotation by $\frac{2\pi}{k}$.
For this reason, we can always assume that the order $k$ linear map is conjugate to a rotation by angle $\frac{2\pi}{k}$. 
Here and below, we write $R_\theta$ for the rotation of $\R^2$ by angle $\theta$.

Our main results are the following:
\begin{theorem}\label{theorem euclideanK}
Let $\gamma$ be a $C^2$-smooth, planar, strictly convex curve which is invariant under $R_{\frac{2\pi}{k}}$, with $k\ge 3$.
 If the Birkhoff billiard map inside $\gamma$ has a rotational invariant curve of $k$-periodic orbits, then $\gamma$ is a circle.
\end{theorem}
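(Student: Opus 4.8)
The plan is to isolate the correct generalization of ``constant width'' and then run a rigidity argument that exploits the rotational symmetry dynamically.

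First I would record the one general fact about twist maps that plays the role of constant width: if $\Gamma$ is a rotational invariant curve all of whose points are $k$-periodic, then the total action (here the perimeter) $\ell=\sum_{i=0}^{k-1}L(s_i,s_{i+1})$ takes the same value on every orbit of $\Gamma$. This is immediate from the variational description recalled in the introduction: differentiating $\ell$ along the one-parameter family and regrouping, the coefficient of each $s_i'$ is precisely the stationarity relation $\partial_2 L(s_{i-1},s_i)+\partial_1 L(s_i,s_{i+1})=0$ (the reflection law), so $\ell'\equiv 0$. Thus the hypothesis yields a one-parameter family of inscribed billiard $k$-gons of equal perimeter, which is the honest analogue for $k\ge 3$ of a body of constant width.

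Second I would encode the symmetry dynamically. Since $R_{\frac{2\pi}{k}}$ is an isometry of $\gamma$, it acts on the phase cylinder as the rigid horizontal shift $(s,\alpha)\mapsto(s+\tfrac{L}{k},\alpha)$, where $L$ is the length of $\gamma$, and it commutes with the billiard map $T$. By Aubry--Mather theory the invariant curve $\Gamma$ consists of minimizers and, having rotation number $p/k$, coincides with the corresponding Mather set; being canonical, $\Gamma$ is invariant under this shift. The induced ``next vertex'' circle map $f$ therefore commutes with the rigid rotation by $\tfrac{L}{k}$, and since $R_{\frac{2\pi}{k}}^{-p}\circ T$ has rotation number $0$, a Poincar\'e-index count produces a fixed point, i.e.\ a genuinely rotation-symmetric billiard $k$-gon whose vertices form a single $R_{\frac{2\pi}{k}}$-orbit $R^{jp}Q_0$.

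The heart of the matter, and the step I expect to be the main obstacle, is upgrading the periodicity of the \emph{whole} family into rigidity of the boundary. I would parametrize $\gamma$ by the outer-normal angle and work with the support function $h$ and the radius of curvature $\rho=h+h''$; rotational symmetry means exactly that $h$ is $\tfrac{2\pi}{k}$-periodic, so its Fourier spectrum lives on multiples of $k$. The persistence of $k$-periodicity along $\Gamma$ says that the linearized billiard return map is the identity on the whole band, which through the discrete Jacobi (mirror) equation becomes a functional constraint on $h$; the remaining work is to show that this constraint, intersected with the symmetry restriction to frequencies divisible by $k\ge 3$, annihilates every non-constant mode, forcing $h\equiv\mathrm{const}$ and hence a circle. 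A clean model for why a circle is forced is the symmetric orbit itself: with vertices $R^{jp}Q_0$ one has $|Q_{j-1}Q_j|=|Q_jQ_{j+1}|$, and using $R^{p}+R^{-p}=2\cos\tfrac{2\pi p}{k}\,I$ the midpoint of the chord joining the two neighbors of a vertex $Q$ satisfies $M-Q=(\cos\tfrac{2\pi p}{k}-1)\,Q$, i.e.\ it lies on the ray through $Q$, so the reflection law there holds exactly when the normal to $\gamma$ is radial. The difficulty is that, applied to this isolated orbit, the computation only pins down the normal at $k$ points; the genuine input is that \emph{every} orbit of the band is $k$-periodic, and the technical core is to show that this much stronger global condition propagates the radial-normal conclusion to all of $\gamma$.
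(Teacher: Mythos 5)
Your setup closely parallels the paper's: the commutation of the billiard map with the induced shift on the phase cylinder, Birkhoff's theorem, and the Aubry--Mather uniqueness argument showing that the invariant curve is itself symmetric are exactly the ingredients used there. But there is a genuine gap at the decisive step, and you flag it yourself without closing it. From the fact that the induced circle map on the invariant curve commutes with the shift by $\tfrac{2\pi r}{k}$ you extract only \emph{one} symmetric orbit via a fixed-point count. The paper instead proves a short but essential lemma: a strictly increasing lift $\tilde S$ satisfying both $\tilde S^k(\psi)=\psi+2\pi r$ and $\tilde S(\psi+\tfrac{2\pi r}{k})=\tilde S(\psi)+\tfrac{2\pi r}{k}$ must equal the shift $\psi\mapsto\psi+\tfrac{2\pi r}{k}$ identically (if $\tilde S(\psi_0)<\psi_0+\tfrac{2\pi r}{k}$ somewhere, summing the resulting chain of $k$ inequalities contradicts $\tilde S^k(\psi_0)=\psi_0+2\pi r$). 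Hence \emph{every} orbit on the invariant curve has vertices forming a single rotation orbit. This is precisely the statement you describe as ``the technical core'' needed to ``propagate the radial-normal conclusion to all of $\gamma$,'' and your proposal leaves it as an unexecuted plan: the support-function/discrete-Jacobi-equation Fourier analysis is announced but never carried out, so the proof is not complete as written.

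Once that lemma is in place, your own ``clean model'' computation finishes the argument more directly than you realize: applied at every $\psi$, the two chords adjacent to each boundary point have equal length, so the reflection law forces the normal at $\gamma(\psi)$ to be proportional to $(R^{p}+R^{-p}-2I)\gamma(\psi)=2\bigl(\cos\tfrac{2\pi p}{k}-1\bigr)\gamma(\psi)$, i.e.\ radial everywhere; then $|\gamma|$ is constant and $\gamma$ is a circle. (The paper takes a slightly different finish: all orbits being symmetric implies the reflection angle $d(\psi)$ is constant equal to $\tfrac{\pi r}{k}$ by an elementary triangle argument, so $\gamma$ is a Gutkin curve, and a non-circular Gutkin curve would force $\tan(nx)=n\tan x$ to have a solution at a rational multiple of $\pi$, contradicting a theorem of Cyr.) So the missing ingredient is not the linearized rigidity analysis you anticipated but the elementary monotone-circle-map lemma; and your opening observation on constancy of the perimeter along the invariant curve, while correct, is not needed for this theorem.
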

\begin{theorem}\label{theorem outer}
Let $\gamma$ be a $C^2$-smooth, planar, strictly convex curve which is invariant under a linear map of order $k\ge 3$.
 If the Outer billiard map in the exterior of $\gamma$ has a rotational invariant curve of $k$-periodic orbits, then $\gamma$ is an ellipse.
\end{theorem}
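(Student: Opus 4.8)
The plan is to use the affine equivariance of the Outer billiard to normalize the symmetry, convert the dynamical hypothesis into a statement about circumscribed polygons, and finish with a rigidity argument. Since the Outer billiard map commutes with the action of $\mathrm{GL}(2,\R)$, and since (as recalled above) an order-$k$ element of $\mathrm{GL}(2,\R)$ with $k\ge 3$ is conjugate to $R:=R_{\frac{2\pi}{k}}$, I would first apply a linear change of coordinates making $\gamma$ invariant under $R$; this carries a rotational invariant curve of $k$-periodic orbits to one of the same type. It then suffices to show that such an $R$-invariant curve must be a \emph{circle}, because pulling back through the normalizing map turns the circle into an ellipse. In these coordinates the support function satisfies $h(\theta+\tfrac{2\pi}{k})=h(\theta)$.

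Next I would translate the dynamics into geometry. Since $x_{i+1}=2P_i-x_i$, the tangency point $P_i$ is the midpoint of the side $[x_i,x_{i+1}]$; hence a $k$-periodic orbit is exactly a convex $k$-gon circumscribed about $\gamma$, each of whose sides is tangent to $\gamma$ at its midpoint. A rotational invariant curve of such orbits is a one-parameter family $\{\Pi_s\}$ of these ``midpoint'' polygons whose tangency points sweep out all of $\gamma$ (the base coordinate covers $S^1$); by Birkhoff's theorem the invariant curve is a Lipschitz graph and the induced dynamics is conjugate to the rotation with number $\tfrac{p}{k}$, with $\gcd(p,k)=1$ forced by the period being exactly $k$.

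The key step is then a conserved quantity. Each orbit is a critical point of the action $\mathcal{A}(t_0,\dots,t_{k-1})=\sum_i S(t_i,t_{i+1})$ with $S$ the Outer generating function, so as $s$ varies we move along a curve of critical points of the \emph{same} functional $\mathcal{A}$; differentiating (justified a.e. by the Lipschitz regularity from Birkhoff's theorem and then using continuity) gives $\frac{d}{ds}\mathcal{A}(\Pi_s)=\langle\nabla\mathcal{A},\dot\Pi_s\rangle=0$, so $\mathcal{A}$ is constant along the family. Writing $K$ for the convex body bounded by $\gamma$ and noting $S(t_i,t_{i+1})=\mathrm{Area}(\mathrm{conv}(\gamma\cup\set{x_i}))=\mathrm{Area}(K)+\Delta_i$, where $\Delta_i$ is the corner region cut off at the vertex $x_i$ and these corner regions tile $\Pi_s\setminus K$, one obtains $\mathcal{A}=(k-1)\mathrm{Area}(K)+\mathrm{Area}(\Pi_s)$. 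Therefore \emph{all} polygons $\Pi_s$ in the family have the same area: this is the Outer-billiard analogue of the constant-width condition appearing for Theorem \ref{theorem euclideanK}.

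It remains to prove the rigidity statement: an $R$-invariant convex curve admitting a one-parameter family, covering all directions, of equal-area circumscribed midpoint-tangent $k$-gons is a circle. I would encode the midpoint-tangency relation and the area as functionals of $h$ and of the circle map $\phi$ on tangency directions (via the closing relation $V(\phi^{-1}\theta)+V(\theta)=2\gamma(\theta)$ for the vertices $V$), impose $h(\theta+\tfrac{2\pi}{k})=h(\theta)$ together with $\phi^k=(\text{shift by }2\pi p)$, and deduce that $h$ is constant modulo a linear term, i.e.\ a circle up to translation. I expect \emph{this} to be the main obstacle: the equal-area condition by itself does not force roundness, since constant-width-type conditions admit non-round solutions (Reuleaux-type curves), so the argument must genuinely use the rotational symmetry to eliminate them, exactly as in the elementary proof that a centrally symmetric curve of constant width is a disc. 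I would handle it by a Fourier/linearization analysis of the resulting functional equation—the symmetry confines $h$ to Fourier modes that are multiples of $k$, and the equal-area constraint annihilates precisely those modes—and then upgrade to the global statement, paralleling the corresponding step in the proof of Theorem \ref{theorem euclideanK}.
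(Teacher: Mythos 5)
Your normalization and geometric reformulation (a $k$-periodic orbit is a circumscribed $k$-gon tangent to $\gamma$ at the midpoints of its sides, and the invariant curve is a one-parameter family of such polygons) are correct, and your observation that the action, hence $\mathrm{Area}(\Pi_s)$, is constant along the family is a valid consequence of Mather theory. But the proof has a genuine gap exactly where you flag ``the main obstacle'', and the mechanism you propose there is not the right one. Two essential ingredients are missing. First, you leave the circle map $\phi$ on tangency parameters as an unknown conjugate of the rotation by $\frac{2\pi p}{k}$; the paper instead pins it down completely: since all points of the invariant curve are periodic, Aubry--Mather uniqueness of rotational invariant curves with a given rational rotation number forces the invariant curve to be preserved by the symmetry, and an elementary monotonicity argument (Lemma \ref{lem impliesIsAShift}) then shows that in a $B$-equivariant parametrization the restricted dynamics is \emph{exactly} the shift $t\mapsto t+\frac{2\pi r}{k}$, i.e.\ the tangency points of every orbit are $\gamma(t),B\gamma(t),\dots,B^{k-1}\gamma(t)$ (Lemma \ref{lem tangencyPointsOuter}). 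Without this step your closing relation contains an unknown reparametrization and the Fourier analysis does not close.

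Second, and more seriously, your expectation that ``the equal-area constraint annihilates precisely those modes'' confined by the symmetry has no arithmetic teeth. The symmetry restricts the support function to Fourier modes divisible by $k$, but nothing in the equal-area condition alone rules those out; equal area is automatic for every invariant curve of periodic orbits and is strictly weaker than the closing condition. What actually kills the higher modes in the paper is the pointwise orbit equation $\lambda(t)(B+I)\dot{\gamma}(t)=(B-I)\gamma(t)$, followed by a reparametrization (solving an ODE, Lemmas \ref{lem ODElemma} and \ref{lem goodParameterOuter}) that makes $\lambda$ constant, after which any nonvanishing Fourier coefficient $c_n$ of $\gamma$ forces $\tan\frac{\pi rn}{k}=n\tan\frac{\pi r}{k}$; the conclusion then rests on the number-theoretic fact (\cite[Theorem 1]{cyr2011number}) that $\tan(nx)=n\tan(x)$ has no solutions at rational multiples of $\pi$. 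Some substitute for this arithmetic input is indispensable: for an arbitrary angle in place of $\frac{\pi r}{k}$ the analogous equations do admit solutions, which is why non-elliptical Gutkin-type and Radon-type curves exist (cf.\ Figure \ref{fig radonExamples}); it is only the rationality of the angle, supplied by the finite-order symmetry, that forces all modes with $|n|>1$ to vanish. Your sketch as written contains no step that could play this role.
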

Here the restriction $k\ge 3$ is natural: there are no $2$-periodic orbits for the Outer billiard map.

Now we turn to an important application of Theorem \ref{theorem outer} to the	 geometry of Radon planes. 
A normed plane $( \mathbb {R}^2, ||\cdot||)$ is called a Radon plane if the Birkhoff orthogonality relation is symmetric (see \cite{MartiniHorst2006AaRc}).
The unit circle of a Radon norm is called a Radon curve.
An equivalent definition in terms of the Outer billiard map was suggested by S. Tabachnikov (\cite {tabachnikovPC}): 
{\it Let $\gamma$ be the unit circle of the norm, then the norm is Radon if and only if the outer billiard map corresponding to $\gamma$ has a rotational invariant curve of 4-periodic orbits. }Using the last definition we get an immediate corollary of Theorem  \ref{theorem outer}:
\begin{corollary}
	If a Radon norm on $\mathbb {R}^2$ is invariant under a linear map of order $4$ then the norm is necessarily Euclidean.
	\end{corollary}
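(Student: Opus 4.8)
The plan is to obtain the corollary as a direct consequence of Theorem~\ref{theorem outer} combined with Tabachnikov's billiard characterization of Radon norms quoted just above. Write $\|\cdot\|$ for the given Radon norm and let $\gamma=\{\,x\in\R^2:\|x\|=1\,\}$ be its unit circle (the associated Radon curve). Invariance of the norm under a linear map $A$ of order $4$ means exactly that $\|Ax\|=\|x\|$ for all $x$, and hence that $A$ maps the level set $\gamma$ to itself; thus $\gamma$ is a strictly convex curve invariant under an order $k=4$ element of $\textrm{GL}(2,\R)$, with $4\ge 3$ as required.

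First I would invoke Tabachnikov's characterization: because $\|\cdot\|$ is Radon, the Outer billiard map in the exterior of $\gamma$ has a rotational invariant curve consisting of $4$-periodic orbits. This is precisely the dynamical hypothesis of Theorem~\ref{theorem outer} with $k=4$, so that theorem applies and forces $\gamma$ to be an ellipse. To finish, I would use the standard linear-algebra fact that a (symmetric) norm whose unit circle is an ellipse is Euclidean: since $A$ is linear it fixes the origin and the unit ball is centrally symmetric, so $\gamma$ is an origin-centered ellipse, a level set $\{\,x:\inprod{Bx}{x}=1\,\}$ for a positive-definite symmetric $B$; then $\|x\|=\inprod{Bx}{x}^{1/2}$ is exactly the norm induced by the inner product $(x,y)\mapsto\inprod{Bx}{y}$. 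Hence the norm is Euclidean.

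Since the deduction is this short, there is no substantial obstacle beyond bookkeeping: the one point deserving a remark is that Theorem~\ref{theorem outer} and Tabachnikov's criterion are stated for $C^2$-smooth, strictly convex $\gamma$, so one must note that these regularity assumptions hold for the (smooth) Radon norm under consideration and are preserved by the order-$4$ symmetry. All the genuine content sits inside Theorem~\ref{theorem outer}; the corollary simply packages the two equivalences ``Radon $\Longleftrightarrow$ a $4$-periodic rotational invariant curve for the outer billiard'' and ``elliptical unit circle $\Longleftrightarrow$ Euclidean norm'' around it.
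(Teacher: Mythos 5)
Your proposal is correct and follows exactly the paper's intended route: apply Tabachnikov's outer-billiard characterization of Radon norms to get the invariant curve of $4$-periodic orbits, invoke Theorem~\ref{theorem outer} with $k=4$ to conclude the unit circle is an ellipse, and note that an origin-centered elliptical unit circle means the norm is Euclidean. The paper treats this as an immediate corollary, and your write-up supplies the same (correct) bookkeeping.
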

Remarkably, examples of non-circular analytic Radon curves having symmetries of 
order $2l$, with odd $l\geq3$, were constructed in a recent paper, see  \cite{bialy2020selfbacklund}, and Figure \ref{fig radonExamples}.
	\begin{figure}
	\begin{center}
	
	\includegraphics[width=1.0\textwidth]{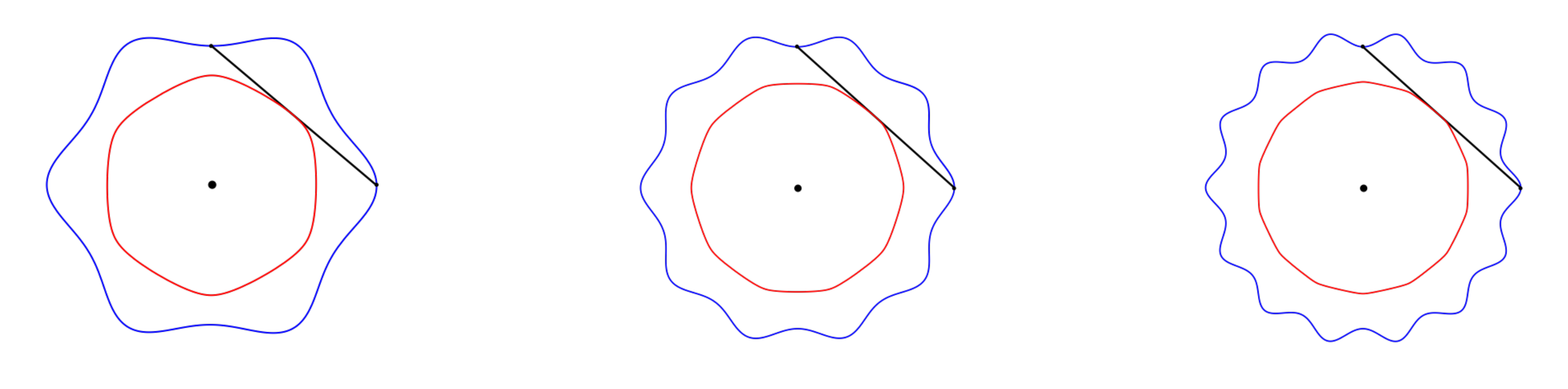}
	\caption{Radon curves (red) and invariant curve of 4-periodic orbits (blue) having symmetries of order $6,10,14$ respectively}\label{fig radonExamples}
	
	\end{center}
\end{figure}

Now we state our result for Symplectic and Minkowski billiards.
\begin{theorem}\label{theorem symplectic}
Let $\gamma$ be a $C^2$-smooth, planar, strictly convex curve which is invariant under a linear map of order $k\ge3$.
 If the Symplectic billiard map inside $\gamma$ has a rotational invariant curve of $k$-periodic orbits, then $\gamma$ is an ellipse.
\end{theorem}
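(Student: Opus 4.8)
The plan is to use the affine invariance of symplectic billiards to normalize the symmetry to an honest rotation, then to show that the hypotheses force the invariant curve to consist precisely of the rotationally symmetric inscribed $k$-gons, after which a one-line computation produces a circle.

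\emph{Normalization.} An order-$k$ element of $\mathrm{GL}(2,\R)$ is conjugate to $R_{2\pi/k}$, and symplectic billiard orbits are preserved by every affine map, since the reflection law ``the tangent at the middle vertex is parallel to the chord joining its neighbours'' is affine invariant. Replacing $\gamma$ by $A\inv\gamma$ for a suitable $A\in\mathrm{GL}(2,\R)$, I may therefore assume that $\gamma$ is invariant under $R:=R_{2\pi/k}$ centred at the origin, and it suffices to prove that the normalized curve is a \emph{circle} (its image under $A$ is then an ellipse). I choose an equivariant parameter $t\in\R/\Z$ with $\gamma(t+\tfrac1k)=R\gamma(t)$, so that $\gamma(t+\tfrac mk)=R^m\gamma(t)$ for all $m$.

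\emph{The invariant curve is symmetric.} Let $T$ be the symplectic billiard map on pairs of consecutive vertices, and let $\rho(t_0,t_1)=(t_0+\tfrac1k,t_1+\tfrac1k)$ be the symmetry of the phase cylinder induced by $R$. Then $\rho$ commutes with $T$, satisfies $\rho^k=\mathrm{id}$, is isotopic to the identity, and has rotation number $\tfrac1k$. Let $\Gamma$ be the given rotational invariant curve of $k$-periodic orbits. The first step is to show $\rho(\Gamma)=\Gamma$: since $\rho$ carries invariant curves to invariant curves of the same rotation number, and distinct rotational invariant curves of a twist map are disjoint Lipschitz graphs (Birkhoff; see \cite{gole2001symplectic}), the circles $\Gamma,\rho\Gamma,\dots,\rho^{k-1}\Gamma$ are totally ordered in the cylinder; because $\rho$ is isotopic to the identity it preserves this order, and $\rho^k=\mathrm{id}$ then forces $\rho\Gamma=\Gamma$.

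\emph{Identifying the orbits.} With $\rho\Gamma=\Gamma$, both $T|_\Gamma$ and $\rho|_\Gamma$ are commuting orientation-preserving circle homeomorphisms. As every orbit on $\Gamma$ has $k$ distinct vertices, $T^k=\mathrm{id}$ on $\Gamma$ and $T|_\Gamma$ has rotation number $j/k$ with $\gcd(j,k)=1$, while $\rho|_\Gamma$ has rotation number $1/k$. Hence $T\rho^{-j}$ restricts to a circle homeomorphism of rotation number $0$ satisfying $(T\rho^{-j})^k=T^k\rho^{-jk}=\mathrm{id}$, and a finite-order orientation-preserving circle homeomorphism of rotation number $0$ is the identity. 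Therefore $T=\rho^j$ on $\Gamma$, which means every orbit on $\Gamma$ has vertices $\gamma(t_0+\tfrac{ij}{k})=R^{ij}\gamma(t_0)$, $i=0,\dots,k-1$; that is, it is exactly the rotationally symmetric $k$-gon $\set{R^i\gamma(t_0)}$. I expect this identification to be the heart of the argument and its principal difficulty, as it is the step that fuses the symmetry of $\gamma$ with the hypothesis that $\Gamma$ genuinely consists of $k$-periodic orbits.

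\emph{Conclusion.} It remains to impose the symplectic reflection law on this symmetric $k$-gon; by symmetry it suffices to do so at $\gamma(t_0)$, where the tangent $\dot\gamma(t_0)$ must be parallel to the chord $\gamma(t_0+\tfrac jk)-\gamma(t_0-\tfrac jk)=(R^j-R^{-j})\gamma(t_0)$. Writing $\beta=2\pi j/k$ and $J=R_{\pi/2}$ one has $R^j-R^{-j}=2\sin\beta\,J$, with $\sin\beta\neq0$ since $k\ge3$ and $\gcd(j,k)=1$. Thus $\dot\gamma(t_0)\parallel J\gamma(t_0)$ for all $t_0$, i.e.\ $[\dot\gamma,J\gamma]=\inprod{\dot\gamma}{\gamma}=\tfrac12\frac{d}{dt}|\gamma|^2=0$, so $|\gamma|$ is constant and the normalized curve is a circle; undoing $A$ shows $\gamma$ is an ellipse. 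As a consistency check, differentiating the inscribed area $\tfrac12\sum_i[\gamma(t_i),\gamma(t_{i+1})]$ along $\Gamma$ and invoking the reflection law shows that this area is constant along $\Gamma$, the conservation law that is the geometric shadow of the dynamical rigidity used above.
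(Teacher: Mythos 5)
Your argument is correct in outline and reaches the theorem, but it splits into a half that mirrors the paper and a half that is genuinely different. The first half (the induced symmetry $\rho$ commutes with $T$, the invariant curve is $\rho$-invariant, and the dynamics on it is the shift $t\mapsto t+\frac{j}{k}$) is exactly what the paper's Lemmas 3.1, 4.1 and 2.1 establish; your commuting-circle-homeomorphism argument is a pleasant alternative to the paper's Lemma 2.1. However, your justification of $\rho\Gamma=\Gamma$ rests on the claim that distinct rotational invariant curves of a twist map are disjoint. That is not Birkhoff's theorem and is false in general: two rotational invariant circles with the same rational rotation number can intersect along a shared periodic orbit (the upper and lower edges of a resonance zone). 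The step is repaired by the tool the paper actually uses, namely the Aubry--Mather uniqueness statement (Bangert, Theorem 5.8): a rotational invariant curve consisting entirely of periodic orbits is the \emph{unique} rotational invariant curve of its rotation number, and $\rho\Gamma$ has the same rotation number and also consists of periodic orbits, so $\rho\Gamma=\Gamma$ outright.

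Your endgame diverges from the paper's and is substantially more elementary. The paper writes the reflection law as $B^2\gamma-\gamma=\lambda(t)B\dot\gamma$, reparametrizes via an ODE lemma to make $\lambda$ constant, expands $\gamma$ in Fourier series, uses Sturm--Hurwitz to get $c_{\pm1}\neq0$, and kills the higher harmonics with the inequality $|\sin nx|\le|n||\sin x|$. You instead normalize the symmetry to a genuine rotation and note that the chord joining the neighbours of $\gamma(t_0)$ is $(R^j-R^{-j})\gamma(t_0)=2\sin\frac{2\pi j}{k}\,R_{\pi/2}\gamma(t_0)$, so the reflection law becomes $\inprod{\gamma}{\dot\gamma}=0$ pointwise and $|\gamma|$ is constant. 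This is correct ($\sin\frac{2\pi j}{k}\neq0$ since $k\ge3$ and $\gcd(j,k)=1$) and bypasses the reparametrization, the Fourier analysis and the trigonometric lemma entirely. It succeeds because $R^j-R^{-j}$ is a fixed nonzero multiple of the complex structure, a feature special to the symplectic reflection law; the analogous step for Outer billiards, where the relation couples $\dot\gamma$ at two different points, does still seem to require the Fourier machinery.
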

In this theorem, the assumption that $k\ge 3$ is necessary: by definition of Symplectic billiards, every curve $\gamma$ admits an invariant curve of two periodic orbits: these are the orbits between the pairs of points of $\gamma$ that have parallel tangents. 

\begin{theorem}\label{theorem Minkowski}
Let $\gamma$ be a $C^2$-smooth, planar, strictly convex curve, and $K$ be its interior. Assume that $K$ is invariant under a linear map $A$ of order $k\ge 3$.
 If the Minkowski billiard map in $K$, with the norm induced by $K$, has a rotational invariant curve of $k$-periodic orbits, then $K$ is invariant under a linear map  of order $ak$, which commutes with $A$,  where $a$ depends on the remainder of $k$ modulo $4$:
\[\begin{cases}
a = 1\,, \textrm{if } k\equiv 2 \pmod{4} \,,\\
a = 2\,, \textrm{if } k\equiv 0 \pmod{4} \,,\\
a = 4\,, \textrm{if } k\equiv 1 \pmod{2} \,.\\
\end{cases}\]
Moreover, these conditions are sufficient for the existence of rotational invariant curves of $k$-periodic orbits, of all rotation numbers $\frac{r}{k}$, where $r$ is coprime with $k$. 
\end{theorem}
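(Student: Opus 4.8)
The plan is to use the affine invariance of Minkowski billiards to reduce to the case $A=R_{2\pi/k}$, and then to exploit two structural facts about the self-gauge norm $N=g_K$. First, $N$ is positively $1$-homogeneous, so $\nabla N$ is $0$-homogeneous and $\nabla N(v)$ is the outer conormal of $\partial K$ at the boundary point $v/N(v)$; in particular $\inprod{\nabla N(v)}{v}=N(v)$. Second, $N$ is $A$-invariant, $N\circ A=N$, because $K$ is; differentiating and using that $A$ is orthogonal yields the equivariance $\nabla N(Ax)=A\,\nabla N(x)$. This equivariance is what will let the $k$-fold symmetry collapse the $k$ reflection equations into a single one. (All this needs $0\in\mathrm{int}\,K$ and $K$ centered at the origin, which is automatic since $R_{2\pi/k}$, $k\ge3$, fixes only $0$.)

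The next step, which I expect to be the main obstacle, is to prove that the orbits on the invariant curve are \emph{regular star polygons}. The rotation $A$ induces a symmetry $\sigma$ of the phase cylinder commuting with the billiard map $T$ and acting as $(t,\cdot)\mapsto(t+\tfrac1k,\cdot)$, so $\sigma^k=\mathrm{id}$ and $\sigma$ has rotation number $\tfrac1k$. Writing $\Gamma$ for the invariant curve of $k$-periodic orbits of rotation number $\tfrac rk$, the graphs $\Gamma,\sigma\Gamma,\dots,\sigma^{k-1}\Gamma$ are rotational invariant curves, hence linearly ordered by height; since $\sigma$ preserves this order (it is isotopic to the identity and fixes the ends of the cylinder) and has finite order, the induced order-preserving permutation of this finite ordered set is the identity, so $\sigma\Gamma=\Gamma$. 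Consequently $g:=T\circ\sigma^{-r}$ restricts to a circle homeomorphism of $\Gamma$ with rotation number $\tfrac rk-\tfrac rk=0$ and $g^k=\mathrm{id}$; a finite-order circle homeomorphism is conjugate to a rotation, so $g=\mathrm{id}$, i.e. $T=\sigma^r$ on $\Gamma$. Thus every orbit has the form $P_j=B^j\gamma(t_0)$ with $B:=A^r=R_{2\pi r/k}$, for \emph{every} $t_0$.

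With regularity established, I would write the Euler--Lagrange (reflection) equation for the action $\sum_j N(\gamma(t_{j+1})-\gamma(t_j))$. For the regular orbit the incoming and outgoing chords at $P_j$ are $B^{j-1}(B-I)\gamma(t_0)$ and $B^{j}(B-I)\gamma(t_0)$; using $\nabla N(B\,\cdot)=B\,\nabla N(\cdot)$ and that the tangent at $P_j$ is $B^{j}\dot\gamma(t_0)$, all $k$ equations reduce to the single condition $\inprod{(I-B)\nabla N(v)}{B\dot\gamma(t_0)}=0$, where $v=(B-I)\gamma(t_0)$. Since $B$ is orthogonal this equals $\inprod{\nabla N(v)}{(B-I)\dot\gamma(t_0)}=\inprod{\nabla N(v)}{v'(t_0)}=\frac{d}{dt_0}N\big((B-I)\gamma(t_0)\big)$. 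Hence the invariant curve exists \emph{if and only if} the Minkowski chord length $N\big((B-I)\gamma(t_0)\big)$ is constant in $t_0$; this is the one-variable identity to which the whole problem reduces.

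Finally I would turn constancy into the extra symmetry via elementary linear algebra. Writing $\beta=2\pi r/k$, one has $B-I=R_\beta-I=2\sin(\beta/2)\,R_{\beta/2+\pi/2}=c\,R_\phi$ with $c=2\sin(\pi r/k)$ and $\phi=\pi r/k+\pi/2$. Constancy of $N((B-I)\gamma)$ says the convex curve $(B-I)\gamma=c\,R_\phi\gamma$ lies on a level set $\set{N=\rho}=\rho\,\partial K$, so $(B-I)K=\rho K$; comparing areas ($\det(B-I)=c^2$) forces $\rho=c$, whence $R_\phi K=K$. Writing $\phi/2\pi=(2r+k)/(4k)$ and combining with the assumed symmetry by $2\pi/k=4/(4k)$, the rotational symmetry group of $K$ is generated by $R_{2\pi\gcd(4,2r+k)/(4k)}$, of order $4k/\gcd(4,2r+k)$; evaluating $\gcd(4,2r+k)$ under $\gcd(r,k)=1$ gives $1,2,4$ according to $k\equiv2$, $k\equiv0\pmod4$, $k$ odd, i.e. order exactly $ak$, and this rotation commutes with $A$; conjugating back gives the stated linear map. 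For sufficiency I would reverse the chain: if $K$ carries the order-$ak$ symmetry then $R_\phi\in\langle R_{2\pi/(ak)}\rangle$ for every coprime $r$ (since $\gcd(4,2r+k)\mid 2r+k$), so $R_\phi K=K$, hence $N((B-I)\gamma)\equiv c$, the single reflection condition holds for all $t_0$, and the regular star $k$-gons $\set{B^j\gamma(t_0)}{}$ sweep out a rotational invariant curve of rotation number $\tfrac rk$. Everything after the regularity step is routine computation, so that step is where I expect the real work to lie.
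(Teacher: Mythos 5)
Your overall route coincides with the paper's: show that the orbits on the invariant curve are the $B$-orbits of single points (regular star polygons), reduce existence of the curve to constancy of $t\mapsto g_K\bigl((B-I)\gamma(t)\bigr)$, and convert that constancy into the extra rotation $R_{\pi r/k+\pi/2}$, finishing with the same $\gcd(4,2r+k)$ computation. Two of your sub-arguments are genuinely different and arguably cleaner. You obtain the constancy directly from the Euler--Lagrange equations of the action, which yields the ``if and only if'' in one stroke, whereas the paper invokes Mather's theorem on constancy of the action along invariant circles for necessity and a separate critical-point computation for sufficiency. And you identify the scaling constant by the area comparison $(B-I)K=\rho K$, $\det(B-I)=c^2$, hence $\rho=c$, instead of the paper's Fourier-coefficient computation, which needs the Sturm--Hurwitz input of Lemma \ref{lem VanishingOfFourier}. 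Your replacement of Lemma \ref{lem impliesIsAShift} by ``a finite-order circle homeomorphism with rotation number $0$ is the identity'' is also correct (rotation numbers of commuting circle maps add), modulo the cosmetic point that $A$ itself need not act as $t\mapsto t+L/k$; only some generator of $\langle A\rangle$ does.

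There is, however, one genuine gap, at exactly the step you flag as the main obstacle. You deduce $\sigma\Gamma=\Gamma$ from the assertion that the rotational invariant curves $\Gamma,\sigma\Gamma,\dots,\sigma^{k-1}\Gamma$ are ``linearly ordered by height'', i.e.\ pairwise disjoint or equal. That is not a general property of rotational invariant circles of a twist map with equal rotation numbers: the two separatrix branches of a pendulum-type resonance are rotational invariant Lipschitz graphs with the same rational rotation number which meet at the hyperbolic periodic orbit without coinciding. What rules this out here is precisely the hypothesis that \emph{every} point of $\Gamma$ (hence of each $\sigma^j\Gamma$) is $(k,r)$-periodic: orbits on a rotational invariant circle are action-minimizing, and by Aubry--Mather theory the set of $(k,r)$-minimal orbits is totally ordered, so a rotational invariant curve consisting entirely of $k$-periodic orbits of rotation number $\frac{r}{k}$ is unique. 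This is how the paper closes the step (citing Bangert, Theorem 5.8). The fact that your ordering claim never uses the periodicity hypothesis is the telltale sign that it cannot be right as stated; once this appeal to Aubry--Mather theory is inserted, the rest of your proof goes through.
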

We remark that this theorem also holds for $k=2$, that is for centrally symmetric $K$.
Indeed, for centrally symmetric $K$, the chords connecting $x\in\bd K$ with $-x$ are Minkowski billiard orbits with respect to the norm induced by $K$.

We illustrate Theorem \ref{theorem Minkowski} with the following examples.
	 
	Let $\gamma=\partial  K$ be a small perturbation of a regular polygon. This perturbation is chosen in such a way that $\gamma$ is smooth, strictly convex, and has the same symmetries as the original regular polygon.
	We shall use the fact that all $k$-periodic orbits corresponding to the same rotational invariant curve have the same action and hence the same (Minkowski) perimeter.   In these examples, $i$ and $i'$,  
	$i=1,2,3...$ denote the intersection points of $\gamma $ with the symmetry axes, see Figure \ref{fig: exampleToThm4}.

	\begin{figure}
		\tikzset{
			arr/.style={postaction=decorate,
				decoration={markings,
					mark=at position .7 with {\arrow[thick]{#1}}
		}}}
		\centering
		\begin{subfigure}[b]{0.4\textwidth}
			\centering
			\begin{tikzpicture}
			\draw[domain = 0:90, smooth, variable = \t, black] plot({cos(\t)^(0.4)},{sin(\t)^(0.4)});
			\draw[domain = 90:180, smooth, variable = \t, black] plot({-abs(cos(\t))^(0.4)},{sin(\t)^(0.4)});
			\draw[domain = 180:270, smooth, variable = \t, black] plot({-abs(cos(\t))^(0.4)},{-abs(sin(\t))^(0.4)});
			\draw[domain = 270:360, smooth, variable = \t, black] plot({abs(cos(\t))^(0.2)},{-abs(sin(\t))^(0.4)});
			\draw[arr=stealth,blue] (1,0)--(0,1);
			\draw[arr=stealth,blue] (0,1)--(-1,0);
			\draw[arr=stealth,blue] (-1,0)--(0,-1);
			\draw[arr=stealth,blue] (0,-1)--(1,0);
			\draw[arr=stealth,red] (0.87,0.87)--(-0.87,0.87);
			\draw[arr=stealth,red] (-0.87,0.87)--(-0.87,-0.87);
			\draw[arr=stealth,red] (-0.87,-0.87)--(0.87,-0.87);
			\draw[arr=stealth,red] (0.87,-0.87)--(0.87,0.87);
			\tkzDefPoint(0.87,0.87){A};
			\tkzDefPoint(0.87,-0.87){B};
			\tkzDefPoint(-0.87,-0.87){C};
			\tkzDefPoint(-0.87,0.87){D};
			\tkzDefPoint(0,0){O};
			\tkzDrawPoints(A,B,C,D,O);
			\node[above] at (1,1) {$1$};
			\node[above] at (-1,1) {$2$};
			\node[below] at (1,-1) {$4$};
			\node[below] at (-1,-1) {$3$};
			
			\node[above] at (0,1) {$1'$};
			\node[left] at (-1,0) {$2'$};
			\node[below] at (0,-1) {$3'$};
			\node[right] at (1,0) {$4'$};
			
			\end{tikzpicture}
			\caption{Two four periodic orbits in a square of different Minkowski perimeters.}\label{subfig orbInSquare}
		\end{subfigure}
		\hspace{1em}
		\begin{subfigure}[b]{0.4\textwidth}
			\centering
			\begin{tikzpicture}[scale = 1.1]
			\draw (1,0)--(0.5,0.866)--(-0.5,0.866)--(-1,0)--(-0.5,-0.866)--(0.5,-0.866)--(1,0);
			\tkzDefPoint(1,0){A};
			\tkzDefPoint(0.5,0.866){B};
			\tkzDefPoint(-0.5,0.866){C};
			\tkzDefPoint(-1,0){D};
			\tkzDefPoint(-0.5,-0.866){E};
			\tkzDefPoint(0.5,-0.866){F};
			\tkzDefPoint(0,0){O}
			\tkzDrawPoints(A,B,C,D,E,F,O);

			\draw[smooth cycle,tension = 3] plot coordinates {(1,0) (0.5,0.866) (-0.5,0.866) (-1,0) (-0.5,-0.866) (0.5,-0.866)};

			\draw[arr=stealth,blue](1,0)--(0.5,0.866);
			\draw[arr=stealth,blue](0.5,0.866)--(-0.5,0.866);
			\draw[arr=stealth,blue](-0.5,0.866)--(-1,0);
			\draw[arr=stealth,blue](-1,0)--(-0.5,-0.866);
			\draw[arr=stealth,blue](-0.5,-0.866)--(0.5,-0.866);
			\draw[arr=stealth,blue](0.5,-0.866)--(1,0);
			
			\draw[arr=stealth,olive](0,0.95)--(-0.75*1.096,0.433*1.096);
			\draw[arr=stealth,olive](-0.75*1.096,0.433*1.096)--(-0.75*1.096,-0.433*1.096);
			\draw[arr=stealth,olive](-0.75*1.096,-0.433*1.096)--(0,-0.866*1.096);
			\draw[arr=stealth,olive](0,-0.866*1.096)--(0.75*1.096,-0.433*1.096);
			\draw[arr=stealth,olive](0.75*1.096,-0.433*1.096)--(0.75*1.096,0.433*1.096);
			\draw[arr=stealth,olive](0.75*1.096,0.433*1.096)--(0,0.866*1.096);
			
			\node[above] at (0.5,0.866) {$1$};
			\node[above] at (-0.5,0.866) {$2$};
			\node[left] at (-1,0) {$3$};
			\node[below] at (-0.5,-0.866) {$4$};
			\node[below] at (0.5,-0.866) {$5$};
			\node[right] at (1,0) {$6$};
			
			\node[above] at (0,0.866*1.096) {$1'$};
			\node[above] at (-0.75*1.096,0.433*1.096) {$2'$};
			\node[below] at (-0.75*1.096,-0.433*1.096) {$3'$};
			\node[below] at (0,-0.866*1.096) {$4'$};
			\node[below] at (0.8*1.096,-0.44*1.096) {$5'$};
			\node[above] at (0.8*1.096,0.44*1.096) {$6'$};
			\end{tikzpicture}
			\caption{Two six periodic orbits in a hexagon with the same Minkowski perimeter.}\label{subfig sameSixInHexagon}
		\end{subfigure}
		\begin{subfigure}[b]{0.4\textwidth}
			\centering
			\begin{tikzpicture}
			
			\tkzDefPoint(1,0){A};
			\tkzDefPoint(0.5,0.866){B};
			\tkzDefPoint(-0.5,0.866){C};
			\tkzDefPoint(-1,0){D};
			\tkzDefPoint(-0.5,-0.866){E};
			\tkzDefPoint(0.5,-0.866){F};
			\tkzDefPoint(0,0){O}
			\tkzDrawPoints(A,B,C,D,E,F,O);
			
			\draw[smooth cycle,tension = 3] plot coordinates {(1,0) (0.5,0.866) (-0.5,0.866) (-1,0) (-0.5,-0.866) (0.5,-0.866)};
			
			\draw[arr=stealth,blue] (1,0)--(-0.5,0.866);
			\draw[arr=stealth,blue] (-0.5,0.866)--(-0.5,-0.866);
			\draw[arr=stealth,blue] (-0.5,-0.866)--(1,0);
			
			\node[right] at (1,0) {$1$};
			\node[above] at (-0.5,0.866){$2$};
			\node[below] at (-0.5,-0.866){$3$};
			
			\draw[arr=stealth,red] (0,0.866*1.096)--(-0.75*1.096,-0.433*1.096);
			\draw[arr=stealth,red] (-0.75*1.096,-0.433*1.096)--(0.75*1.096,-0.433*1.096);
			\draw[arr=stealth,red] (0.75*1.096,-0.433*1.096)--(0,0.866*1.096);
			
			\node[above] at (0,0.866) {$1'$};
			\node[below] at (-0.8,-0.44) {$2'$};
			\node[below] at (0.8,-0.44) {$3'$};
			
			\end{tikzpicture}
			\caption{Two three periodic orbits in a hexagon with different Minkowski perimeters.}\label{subfig diffThreeInHexagon}
		\end{subfigure}
		\caption{Examples to Theorem \ref{theorem Minkowski}.}\label{fig: exampleToThm4}
	\end{figure}
{\it
\begin{enumerate}
\item Let $\gamma=\partial K$ be perturbation of the boundary of a square. 
	$K$ has a symmetry of order $4$, but not of order $8$, so by Theorem \ref{theorem Minkowski}, the Minkowski billiard map inside $K$ has no  invariant curves of $4$ periodic orbits. 
	It is easy to see that the orbits $1234$ and $1'2'3'4'$ in Figure \ref{subfig orbInSquare} are  Minkowski billiard orbits. 
	The Minkowski perimeter of the first orbit is (approximately) $8$, while the perimeter of the second orbit is (approximately) $4$.
	Since these perimeters are not equal, we conclude that there is no invariant curve of $4$-periodic orbits. 
	\item As another example, let $K$ be the perturbation of a regular hexagon. 
	Here again we have two orbits, $123456$ and $1'2'3'4'5'6'$, see Figure \ref{subfig sameSixInHexagon}. Both of them are Minkowski billiard orbits. 
	Their Minkowski perimeters are equal (both equal $6$),  which is consistent with the fact that in this case an invariant curve of $6$-periodic orbits does exist by Theorem \ref{theorem Minkowski}.
	\item On the other hand, if we consider the 3-periodic orbits,  $123$ and $1'2'3'$ in Figure \ref{subfig diffThreeInHexagon}, then these are also Minkowski billiard orbits, but their Minkowski perimeters are approximately $6$ and $4.5$ respectively, which means that no invariant curve of $3$-periodic orbits exists, in agreement with Theorem \ref{theorem Minkowski}.
\end{enumerate}
	}

Next we show that Theorem \ref{theorem Minkowski}  
 implies that for Minkowski billiards we can have coexistence of invariant curves with arbitrary rational rotation numbers.
For example, for any $3\le k_1,k_2\in\N$, let $a_{k_1},a_{k_2}$ be as in Theorem \ref{theorem Minkowski}.
Any curve $\gamma$ which is invariant under an element of $\mathrm{GL}(2,\R)$ of order $\mathrm{lcm}(a_{k_1}k_1,a_{k_2}k_2)$ is also invariant under an element of order $a_{k_1}k_1$ and under an element of order $a_{k_2}k_2$.
As a result, Theorem \ref{theorem Minkowski} implies the existence of rotational invariant curves for the Minkowski billiard map in $\gamma$  of rotation numbers $\frac{r_1}{k_1}$ and $\frac{r_2}{k_2}$, where $r_1,r_2$ are coprime with $k_1,k_2$ respectively.
This construction generalizes naturally to any finite set of rational numbers.
Interestingly, an analogous construction for Birkhoff billiards is not known. 

The proofs of our main results contain very similar ingredients: a convenient parametrization of $\gamma$, and Aubry-Mather theory allow us to deduce that the bounce points of the $k$-periodic orbits coincide with orbits of the symmetry of the table.
In the first three types of billiards, we use this to derive trigonometric equations that cannot be solved by rational multiples of $\pi$.
However, if $\gamma$ was not a circle (for the Birkhoff case) or an ellipse (for the Outer and Symplectic cases), we find a rational multiple of $\pi$ that solves these equations, getting a contradiction.
In fact, the last step in our proof of Theorem \ref{theorem euclideanK} can be replaced as follows: instead of considering trigonometric equations, one can use the Lemma proved in \cite{InnamiNobuhiro1988Ccwp}.

In the Minkowski case, we see that the fact that  the points of the $k$-periodic orbit coincide with the orbit of the symmetry, yields the additional symmetry of $K$ guaranteed by the theorem.

\subsection{Total Integrability}\label{subsect totalInteg}
In this section we apply our main results to the question of total integrability.
Let $T$ be an area preserving twist map of the cylinder, and $U$ be an open subset of the cylinder.
We say that $T$ is \textit{totally integrable in $U$}, if $U$ admits a $C^1$ foliation by rotational invariant curves of $T$.
The following corollaries follow immediately from the main theorems formulated in the previous subsection:

\begin{corollary}\label{cor notTotaIntegrableBirkhoff}
Let $3\<= k\in\N$, and $\gamma$ be a $C^2$-smooth, planar, strictly convex curve which is invariant under $R_{\frac{2\pi}{k}}$. 
Suppose that the Birkhoff billiard map inside $\gamma$ admits a rotational invariant curve of rotation number $\frac{r}{k}$, with $r$ coprime to $k$.
Suppose that this curve has a neighborhood $U$ in which the Birkhoff billiard map is totally integrable.
Then $\gamma$ is a circle.
\end{corollary}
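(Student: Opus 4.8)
The plan is to reduce Corollary~\ref{cor notTotaIntegrableBirkhoff} to Theorem~\ref{theorem euclideanK} by upgrading the given rotational invariant curve $\Gamma_0$ of rotation number $\frac{r}{k}$ to a rotational invariant curve all of whose points are $k$-periodic. Since $\gcd(r,k)=1$, once we know that every point of $\Gamma_0$ is periodic its period is exactly $k$, so $\Gamma_0$ becomes a rotational invariant curve of $k$-periodic orbits; as $\gamma$ is $R_{\frac{2\pi}{k}}$-invariant, Theorem~\ref{theorem euclideanK} then immediately yields that $\gamma$ is a circle. Thus the entire content of the corollary is the implication: total integrability near $\Gamma_0$ forces $\Gamma_0$ to consist of periodic orbits. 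Note that the symmetry hypothesis is not used in this reduction itself; it enters only through the invocation of Theorem~\ref{theorem euclideanK}.

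To prove that implication I would exploit the $C^1$ foliation of $U$ by rotational invariant curves together with area preservation. By Birkhoff's theorem each leaf is a graph over the base circle, so in $U$ I can introduce $C^1$ coordinates $(\theta,I)$, where $\theta$ is the base (arc-length) coordinate and $I$ is a transverse label for the leaves, so that each invariant curve is $\{I=\mathrm{const}\}$ and $\Gamma_0=\{I=I_0\}$. Because the billiard map $T$ carries each invariant curve to itself, in these coordinates it has the skew form $T(\theta,I)=(\phi_I(\theta),I)$, where $\phi_I$ is an orientation-preserving circle homeomorphism whose rotation number equals that of $T$ restricted to the leaf; in particular $\phi_{I_0}$ has rotation number $\frac{r}{k}$. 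Pushing the invariant Lebesgue area form forward through the $C^1$ change of coordinates yields an invariant measure of the form $\mu(\theta,I)\,d\theta\,dI$ with $\mu$ continuous and strictly positive. Since $T$ preserves this measure and fixes the value of $I$, the fiberwise relation $\mu(\theta,I)=\mu(\phi_I(\theta),I)\,\phi_I'(\theta)$ holds, which says precisely that each $\phi_I$ preserves the finite, positive, non-atomic measure $m_I=\mu(\cdot,I)\,d\theta$ on its leaf.

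The periodicity now follows from an elementary measure argument. Suppose $f:=\phi_{I_0}^{\,k}$ were not the identity. Since $\mathrm{rot}(\phi_{I_0})=\frac{r}{k}$ we have $\mathrm{rot}(f)=0$, so $f$ has a fixed point, and if $f\neq\mathrm{id}$ there is an interval $(a,b)\subset\Gamma_0$ bounded by two fixed points on which, say, $f(x)>x$. Picking $x\in(a,b)$, the iterates $f^{\,n}\big((x,f(x))\big)=(f^{\,n}x,f^{\,n+1}x)$, $n\geq 0$, are pairwise disjoint subintervals of $(a,b)$ increasing monotonically toward $b$, each of the same positive $m_{I_0}$-measure by invariance of $m_{I_0}$. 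Infinitely many pairwise disjoint intervals of equal positive measure cannot fit inside the finite-measure set $(a,b)$, a contradiction. Hence $\phi_{I_0}^{\,k}=\mathrm{id}$, i.e.\ every point of $\Gamma_0$ is $k$-periodic, so $\Gamma_0$ is a rotational invariant curve of $k$-periodic orbits and Theorem~\ref{theorem euclideanK} applies.

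I expect the main obstacle to be the regularity bookkeeping in the middle step: justifying that the $C^1$ foliation genuinely produces coordinates in which $T$ is fiber-preserving, and that the disintegrated leaf-measures $m_I$ are honest finite positive measures preserved by $\phi_I$. This is where Birkhoff's graph property and the area-preservation (exactness) of the twist map are used, and one must check that $\mu$ is continuous and bounded away from zero so that nondegenerate arcs carry positive, finite measure. Once the invariant leaf-measure is in hand, the passage from a rational rotation number to genuine periodicity is the short combinatorial argument above, and the final appeal to Theorem~\ref{theorem euclideanK} is immediate.
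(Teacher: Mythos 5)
Your proposal is correct and follows essentially the same route as the paper: the paper's (much terser) argument is precisely that the leaf inherits an absolutely continuous invariant measure from the $C^1$ foliation, that a rational rotation number together with such a measure forces every point of the curve to be periodic, and that Theorem \ref{theorem euclideanK} then applies. You have simply supplied the details (the fibered coordinates, the disintegrated leaf measures, and the disjoint-intervals argument ruling out non-periodic points) that the paper leaves implicit.
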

It was shown in \cite{Bialy1993} that the only Birkhoff billiard table for which the entire cylinder is totally integrable is a circle.
Here the symmetry assumption allows us to weaken the condition of total integrability to  a part of the phase space.

The following corollary is valid for both Outer and Symplectic billiards:
\begin{corollary}\label{cor notTotalintegrableOuterSymplectic}
Let $3\<= k\in\N$, and $\gamma$ be a $C^2$-smooth, planar, strictly convex curve which is invariant under an element of $\mathrm{GL}(2,\R)$ of order $k$.
Suppose that  the Outer  (or Symplectic) billiard map of $\gamma$ admits a rotational invariant curve of rotation number $\frac{r}{k}$, with $r$ coprime to $k$.
If this curve has a neighborhood $U$, such that the Outer (or Symplectic) billiard map is totally integrable in $U$,
then $\gamma$ is an ellipse.
\end{corollary}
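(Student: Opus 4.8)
The plan is to reduce the statement to Theorem~\ref{theorem outer} (respectively Theorem~\ref{theorem symplectic}) by extracting, from the total integrability hypothesis, an honest rotational invariant curve consisting of $k$-periodic orbits. The given curve $\Gamma_0$ has rotation number $\frac rk$ with $r$ coprime to $k$, so every periodic orbit lying on it has minimal period exactly $k$; the whole task is thus to show that $\Gamma_0$ consists \emph{entirely} of $k$-periodic orbits. Once this is done, $\gamma$ is invariant under an order-$k$ element of $\mathrm{GL}(2,\R)$ and its Outer (resp. Symplectic) billiard map admits an invariant curve of $k$-periodic orbits, so Theorem~\ref{theorem outer} (resp. Theorem~\ref{theorem symplectic}) applies directly and forces $\gamma$ to be an ellipse. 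The symmetry hypothesis enters only at this last invocation; total integrability is used purely to upgrade $\Gamma_0$ to an all-periodic curve.

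The heart of the matter is a rigidity lemma with no reference to symmetry: \emph{in a region $U$ foliated by rotational invariant curves of an area-preserving twist map $T$, every leaf with rational rotation number $\frac rk$ consists of $k$-periodic orbits.} First I would observe that, since the rotation number is strictly monotone across the foliation and invariant graphs of a twist map do not cross, $\Gamma_0$ coincides with the leaf of rotation number $\frac rk$. Using the $C^1$ foliation I would pass to adapted coordinates $(x,y)\in S^1\times(-\epsilon,\epsilon)$ in which the leaves are the circles $\{y=\mathrm{const}\}$ and $T(x,y)=(f(x,y),y)$, with $f(\cdot,y)$ a circle homeomorphism whose rotation number $\rho(y)$ is strictly monotone and satisfies $\rho(0)=\frac rk$. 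Setting $D(x,y)=\widetilde{f^{(k)}}(x,y)-x-r$ on the lift, the zeros of $D(\cdot,y)$ are exactly the $k$-periodic points on $\Gamma_y$ of the correct rotation type; since $k\rho(y)-r$ is strictly monotone and vanishes only at $y=0$, the displacement $D(\cdot,y)$ has one fixed nonzero sign for each $y\ne 0$ and can vanish only on $\Gamma_0$.

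Suppose toward a contradiction that $\Gamma_0$ is not all-periodic, i.e. $D(\cdot,0)\not\equiv 0$. As $T^k|_{\Gamma_0}$ has rotation number $0$, the function $D(\cdot,0)$ changes sign, so there is a fixed point $p$ of $T^k$ that is repelling along $\Gamma_0$. In the nondegenerate case the eigenvalue of $DT^k(p)$ in the $\Gamma_0$-direction exceeds $1$, whence area preservation forces the transversal eigenvalue to be smaller than $1$ and yields a local stable manifold $W^s$ through $p$ transversal to $\Gamma_0$. A point of $W^s$ lying on a leaf $\Gamma_y$ with $y\ne 0$ then has its entire $T^k$-orbit confined to the invariant leaf $\Gamma_y$ while converging to $p\in\Gamma_0$, which is impossible because $\Gamma_y$ is closed and disjoint from $\Gamma_0$. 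This contradiction gives $T^k|_{\Gamma_0}=\mathrm{id}$, i.e. $\Gamma_0$ is an invariant curve of $k$-periodic orbits.

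The main obstacle I anticipate is the degenerate case, in which the fixed points of $T^k$ on $\Gamma_0$ fail to be hyperbolic and the invariant-manifold argument does not apply as stated. I expect to close this gap by a topological-degree or flux comparison between $\Gamma_0$ and the adjacent leaves $\Gamma_{\pm y}$: the one-signed displacement on the neighbouring leaves is incompatible, within a $C^1$ foliation, with a sign-changing zero of $D(\cdot,0)$, even a degenerate one. Alternatively, one may invoke the classical fact from Birkhoff and Aubry--Mather theory that an invariant circle of rational rotation number accumulated by invariant circles on both sides must be foliated by periodic orbits --- the same rigidity phenomenon that underlies the total-integrability result of \cite{Bialy1993}. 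With the lemma established, the corollary follows at once for both the Outer and the Symplectic billiard maps.
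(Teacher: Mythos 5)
Your reduction is the same as the paper's: everything hinges on showing that the given invariant curve of rotation number $\frac{r}{k}$ consists entirely of $k$-periodic orbits, after which Theorem \ref{theorem outer} (resp.\ Theorem \ref{theorem symplectic}) finishes the job. The gap is in that key step. The paper disposes of it by a measure argument: writing the map in $C^1$ foliation coordinates as $(x,y)\mapsto(f(x,y),y)$ and pushing forward the area form, area preservation gives $J(f(x,y),y)\,\partial_x f(x,y)=J(x,y)$, so each leaf map $f(\cdot,y)$ preserves the absolutely continuous measure $J(x,y)\,dx$ with continuous positive density. A circle homeomorphism preserving a non-atomic, fully supported measure is conjugate to a rigid rotation; since the rotation number is $\frac{r}{k}$, its $k$-th iterate is the identity, and the leaf is all-periodic. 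This works uniformly, with no case analysis and no monotonicity input.

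Your dynamical argument does not close. First, rotation number $0$ for $T^k|_{\Gamma_0}$ only forces $D(\cdot,0)$ to have a zero, not to change sign; if $D\ge 0$ with a nonempty proper zero set, every fixed point is a minimum of $F-\mathrm{id}$ and hence has derivative exactly $1$ along $\Gamma_0$, so the hyperbolic/stable-manifold argument never applies in that scenario --- the ``degenerate case'' is not a peripheral technicality but contains a whole class of configurations. Second, your proposed repair rests on strict monotonicity of $\rho(y)$ across the foliation. Non-strict monotonicity is standard from the twist condition (the leaf maps are pointwise ordered), but strictness at the rational value $\frac{r}{k}$ does not follow from pointwise ordering alone: two ordered circle maps $g_1<g_2$ can both have rotation number $\frac{r}{k}$, since each only needs $g_i^k-\mathrm{id}-r$ to vanish somewhere. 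Ruling this out requires exactly the area-preservation/invariant-measure input you have not used; asserting strict monotonicity, or citing ``the classical fact'' that a rational leaf accumulated by invariant circles is all-periodic, is assuming the conclusion of the lemma you set out to prove. With the invariant-measure argument in place your continuity comparison between $\Gamma_0$ and the neighbouring leaves becomes superfluous, which is why the paper's three-line proof is both shorter and complete.
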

\begin{corollary}\label{cor notTotalIntegrableMinkowski}
Let $3\le k\in\N$, and let $\gamma$ be a $C^2$-smooth, planar strictly convex curve which is invariant under a linear map of order $k$.
Suppose that the Minkowski billiard map in $K$, where $K$ is the interior of $\gamma$, with the norm induced by $K$, has  a rotational invariant curve of rotation number $\frac{r}{k}$, with $r$ coprime to $k$.
If this invariant curve has a neighborhood $U$ in which the Minkowski billiard map is totally integrable,
then $K$ is also invariant under a linear map of order $ak$, where $a$ is as in Theorem \ref{theorem Minkowski}.
\end{corollary}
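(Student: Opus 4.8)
The plan is to reduce the corollary to a direct application of Theorem \ref{theorem Minkowski}. The gap between the two statements is precisely that Theorem \ref{theorem Minkowski} requires a rotational invariant curve all of whose orbits are $k$-periodic, whereas here we are only given an invariant curve of rotation number $\frac{r}{k}$ together with total integrability on a neighborhood $U$. So the heart of the matter is to upgrade the rational rotation number to genuine $k$-periodicity of every orbit on that curve; once this is done, since $r$ is coprime to $k$ the common period is exactly $k$, and Theorem \ref{theorem Minkowski} immediately yields the order-$ak$ symmetry of $K$.

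First I would exploit the total integrability hypothesis. By definition $U$ carries a $C^1$ foliation by rotational invariant curves of the Minkowski billiard map $T$. Choosing a $C^1$ coordinate $c$ that labels the leaves, $T$ takes the form $(x,c)\mapsto(f_c(x),c)$, where each $f_c$ is a circle diffeomorphism and $c$ is constant along orbits. Disintegrating the invariant area form of $T$ along this $C^1$ foliation produces, on each leaf, a $T$-invariant Borel measure; because the leaves are smooth curves and the foliation is $C^1$, this measure has full support and no atoms.

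The key step is then the following classical fact about the circle: a homeomorphism of $S^1$ that preserves a non-atomic measure of full support and has rational rotation number must have all of its points periodic (equivalently, it is conjugate to the corresponding rigid rotation). I would prove this by the standard fundamental-domain argument: if some orbit failed to be periodic, then $f_c^{\,k}$ would possess a fixed point together with an adjacent half-open interval $I$ whose iterates $f_c^{\,kn}(I)$ are pairwise disjoint and all of equal, positive measure, contradicting finiteness of the total measure. Applying this to the leaf $\Gamma$ whose rotation number is $\frac{r}{k}$ shows that every orbit on $\Gamma$ is periodic of period exactly $k$; that is, $\Gamma$ is a rotational invariant curve of $k$-periodic orbits.

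With this in hand the corollary is immediate: $T$ has a rotational invariant curve of $k$-periodic orbits, so Theorem \ref{theorem Minkowski} applies verbatim and $K$ is invariant under a linear map of order $ak$, with $a$ as specified there. (The same two-step scheme — total integrability forces a rational invariant curve to be a curve of $k$-periodic orbits, then apply the relevant rigidity theorem — proves Corollaries \ref{cor notTotaIntegrableBirkhoff} and \ref{cor notTotalintegrableOuterSymplectic} as well.) The main obstacle is the regularity bookkeeping in the disintegration step: one must ensure that the $C^1$ foliation really hands down a full-support invariant measure to each leaf, since it is exactly full support that rules out the heteroclinic orbits a rational rotation number would otherwise permit. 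If one prefers to avoid measure disintegration, I would instead invoke the structure theory of totally integrable twist maps, by which the restriction of $T$ to each leaf of a $C^1$ invariant foliation is conjugate to the rigid rotation by its rotation number (see the references on twist maps cited in the introduction), giving the $k$-periodicity of $\Gamma$ directly.
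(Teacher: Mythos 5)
Your proposal is correct and follows essentially the same route as the paper: the authors likewise observe that the invariant curve is a leaf of the $C^1$ invariant foliation of $U$, hence inherits an (absolutely continuous, full-support) invariant measure, which together with the rational rotation number $\frac{r}{k}$ forces every point of the curve to be periodic, after which Theorem \ref{theorem Minkowski} applies. You merely spell out the measure-theoretic details (disintegration and the fundamental-domain argument for circle homeomorphisms) that the paper leaves implicit.
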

These corollaries follow from the corresponding theorems by the following argument:
in each corollary, the invariant curve of rotation number $\frac{r}{k}$ is a leaf of a $C^1$ foliation of the open set $U$.
Therefore, this leaf inherits an absolutely continuous invariant measure.
Moreover, since the rotation number is rational, all points on this invariant curve must be periodic.
Consequently, we can use the corresponding theorem to obtain the desired conclusion.

In the case of Minkowski billiards, we can iterate Corollary \ref{cor notTotalIntegrableMinkowski}, to obtain the following rigidity result for Minkowski billiards, under a symmetry assumption:
\begin{theorem} \label{thm BirkhoffForMinkowski}
Let $\gamma$ be a $C^2$-smooth, planar, strictly convex curve which is invariant under a linear map of order $k\ge 3$.
Consider the Minkowski billiard system in $K$, the interior of $\gamma$, with the norm induced by $K$.
If the Minkowski billiard map of $K$ is totally integrable on the entire phase cylinder, then $\gamma$ is a (Euclidean) ellipse.
\end{theorem}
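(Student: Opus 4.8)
The plan is to feed Corollary~\ref{cor notTotalIntegrableMinkowski} into itself: total integrability on the whole cylinder lets me apply the corollary at rotation number $\tfrac1n$ for any symmetry order $n$ that $K$ already has, each application upgrading the symmetry to a strictly larger order. Iterating produces linear symmetries of $K$ of unbounded order, and then I conclude that $K$ is an ellipse from the compactness of the linear symmetry group of a convex body.

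First I record what total integrability gives. The leaves of the $C^1$ foliation realize every rotation number in $(0,\tfrac12)$, and $\tfrac1n\in(0,\tfrac12)$ for every $n\ge 3$; moreover each leaf automatically has a totally integrable neighborhood, since the \emph{entire} cylinder is totally integrable. As $1$ is coprime to $n$, the hypotheses of Corollary~\ref{cor notTotalIntegrableMinkowski} are met at rotation number $\tfrac1n$ whenever $K$ carries a linear symmetry of order $n$. This is the only place where integrability on the whole cylinder, rather than near a single curve, is used.

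Now the iteration step: if $K$ is invariant under a linear map of order $n\ge 3$, I claim it is invariant under one of order at least $2n$. If $n\not\equiv 2\pmod 4$, I apply Corollary~\ref{cor notTotalIntegrableMinkowski} directly at rotation number $\tfrac1n$ and obtain a symmetry of order $a_n n$ with $a_n\in\{2,4\}$ by Theorem~\ref{theorem Minkowski}, so $a_n n\ge 2n$. The delicate case is $n\equiv 2\pmod 4$, where $a_n=1$ and the direct application gives nothing new; here I pass to a subgroup. Writing $A$ for the symmetry of order $n$, the map $A^2$ has order $n/2$, which is odd and $\ge 3$ (since $n\ge 6$). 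Applying Corollary~\ref{cor notTotalIntegrableMinkowski} to $A^2$ at rotation number $\tfrac{1}{n/2}$ and using $a_{n/2}=4$ yields a symmetry of order $4\cdot\tfrac n2=2n$. Thus in all cases the order at least doubles. Starting from the given order $k$ and iterating, I get symmetries of orders $k_0=k$ with $k_{i+1}\ge 2k_i$, hence of unbounded order.

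For the endgame, let $G=\{B\in\mathrm{GL}(2,\R):B(K)=K\}$; after translating so that the centroid of $K$ lies at the origin, every element of $G$ fixes the origin. Since $K$ is bounded with nonempty interior, $G$ is a closed and bounded, hence compact, subgroup of $\mathrm{GL}(2,\R)$, and it contains cyclic subgroups of unbounded order, so $G$ is infinite. A compact subgroup of $\mathrm{GL}(2,\R)$ is conjugate into $O(2)$, and an infinite closed subgroup of $O(2)$ contains $SO(2)$; hence a linear image of $K$ is invariant under all rotations, i.e.\ a disc, and $K$ is an ellipse. I expect the main obstacle to be exactly the case $k\equiv 2\pmod 4$, where the naive bootstrap stalls because $a_k=1$; the subgroup trick of replacing $A$ by the odd-order map $A^2$ is what restarts it, after which every subsequent order is divisible by $4$ and the doubling proceeds unobstructed.
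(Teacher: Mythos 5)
Your proof is correct, and its engine is the same as the paper's: feed Corollary~\ref{cor notTotalIntegrableMinkowski} into itself, handling the stalled case $n\equiv 2\pmod 4$ (where $a_n=1$) by passing to the odd-order power $A^2$ --- the paper does exactly this, phrased as using the invariant curve of rotation number $\frac{2}{k}$ --- so that after one step every order is divisible by $4$ and the order doubles forever. Where you genuinely diverge is the endgame. The paper invokes the commutativity clause of Theorem~\ref{theorem Minkowski}: all the symmetries produced commute, hence are simultaneously conjugate to rotations by $\frac{2\pi r_n}{2^n m}$ with $r_n$ coprime to $2^n m$, so $\gamma$ is invariant under rotation by $\frac{2\pi}{2^n m}$ in a fixed basis, and the orbit of a boundary point is dense in a circle, forcing $K$ to be a disc there. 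You instead pass to the full linear symmetry group $G$ of $K$, note it is compact (bounded since $K$ is sandwiched between balls centered at the fixed centroid, closed since preserving a convex body is a closed condition and all elements have $|\det|=1$), infinite because it contains elements of unbounded order, hence conjugate to a closed infinite subgroup of $O(2)$, which must contain $SO(2)$. This buys you independence from the commutativity assertion in Theorem~\ref{theorem Minkowski} --- compactness supplies the common conjugating basis for free rather than by construction --- at the cost of importing the classification of compact subgroups of $\mathrm{GL}(2,\R)$. Both arguments are complete; the paper's is more self-contained, yours slightly more robust.
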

\begin{proof}
Suppose that the entire phase cylinder of the Minkowski billiard map is $C^1$ foliated by invariant curves.
Then, by continuity of the rotation number, there are invariant curves of all rotation numbers  in $(0,1)$.
Consider the one with rotation number $\frac{1}{k}$.
This curve has a neighborhood in which the Minkowski billiard map is totally integrable, so by Corollary \ref{cor notTotalIntegrableMinkowski}, the curve $\gamma$ is $ak$-symmetric, where $a$ is as in Theorem \ref{theorem Minkowski}.
If $k\not\equiv 2\pmod{4}$, then the number $ak$ is divisible by $4$.
If $k\equiv 2 \pmod{4}$, then $\gamma$ is also invariant under a linear map of odd order $\frac{k}{2}\ge 3$.
Repeat the previous argument, now using the invariant curve of rotation number $\frac{2}{k}$.
This yields that $\gamma$ is invariant under a linear map of order $4\cdot\frac{k}{2}=2k$, which is also divisible by $4$.
As a result, we see that for any $k$, the assumption of the theorem implies that $\gamma$ is invariant under a linear map of order $m$, which is divisible by $4$.
Repeat the argument for the invariant curve of rotation number $\frac{1}{m}$, to get that $\gamma$ is also $2m$-symmetric, and continue this way, to see that for all $n$, the curve $\gamma$ is invariant under a linear map of order $2^n m$.
Moreover, Theorem \ref{theorem Minkowski} also claims that all of those linear maps commute.
As a result, they all are simultaneously conjugate to rotation maps by angles $\frac{2\pi r_n}{2^n m}$, where $r_n$ is coprime to $2^n m$.
Work now in the basis where all of these maps are rotations.
Since $r_n$ is coprime with $2^n m$, then $\gamma$ is also invariant under rotation by $\frac{2\pi}{2^n m}$.
 Take some point $x_0\in\bd K$.
All of its rotations by angles $\frac{2\pi}{2^n m}$ are also points on $\bd K$. 
Since $n$ can be arbitrarily large, these points will be dense in the circle or radius $|x_0|$.
The boundary of $K$ is closed, so it contains this circle.
But $K$ is a convex set, so this means that $K$ must be a disc.
We see that $\gamma$ is a circle in some basis, so it is an ellipse.
\end{proof}
 \textbf{Example:} Consider the $L^p$ balls in $\R^2$ for $p> 2$. 
 It is well known (see, e.g., \cite{LpIsometry}) that the only linear isometries of the $L^p$ norm, for $p\neq 2$ are generalized permutations (that is, products of permutation matrices with diagonal matrices with diagonal entries $\pm{1}$). 
 There exists a generalized permutation of order four, for example, rotation by $\frac{\pi}{2}$, but in $\R^2$ there are no generalized permutations of order eight.
 By Corollary \ref{cor notTotalIntegrableMinkowski}, the Minkowski billiard map inside an $L^p$ ball is not totally integrable. 

{Structure of the paper is as follows:} 
Sections \ref{section birkhoff}, \ref{section proofOuter}, \ref{section proofSymplectic}, and \ref{section proofMinkowski} are dedicated to the proofs of Theorems \ref{theorem euclideanK}, \ref{theorem outer}, \ref{theorem symplectic}, and \ref{theorem Minkowski}, respectively.
Section \ref{section followUp} presents some follow up questions that arise naturally from these results.
\subsection*{Acknowledgments}
M.B is partially supproted by ISF grant 580/20, and DFG grant MA-2565/7-1 within the Middle East Collaboration Program.
 D.T is supproted by ISF grant 580/20 and ISF grant 667/18.
  We are grateful to Yaron Ostrover for useful discussions.

\section{Birkhoff Billiards - proof of Theorem \ref{theorem euclideanK}}\label{section birkhoff}
In this section we will prove Theorem \ref{theorem euclideanK}.
 Let $\gamma$ be a $C^2$-smooth, planar, strictly convex curve, which is invariant under a rotation by angle $\frac{2\pi}{k}$, $R_{\frac{2\pi}{k}}$.
  In this section, we parametrize $\gamma$ by $\psi$, which is the angle that the outer normal to $\gamma(\psi)$ makes with the $x$ axis.
The phase cylinder of the billiard ball map is the space of oriented lines intersecting $\gamma$.
We parametrize it by $\psi$, and by $\delta$, the angle between the oriented line and the tangent to $\gamma$ at $\gamma(\psi)$ (the first intersection point of the line with $\gamma$ according to the orientation of the line), as in \cite{bialy2020birkhoffporitsky} (see Figure \ref{fig parameterPhaseSpaceBirk}).
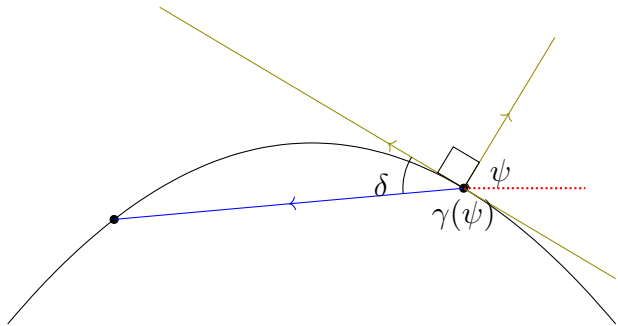
\begin{figure}
\begin{center}
\begin{tikzpicture}[scale = 2]
\draw[domain = -2:2, smooth, variable = \x, black] plot({\x},{1-0.3*\x*\x});
\tkzDefPoint(1,0.7){A};
\tkzDefPoint(-1.3,0.493){B};
\tkzDrawPoints(A,B);
\begin{scope}[decoration={
markings,
mark = at position 0.5 with {\arrow{>}}}
]
\draw[postaction = {decorate},blue](1,0.7)--(-1.3,0.493);
\draw[postaction = {decorate},olive](2,0.1)--(-1,1.9);
\draw[postaction = {decorate},olive](1,0.7)--(1.6,1.7);
\end{scope}
\tkzDefPoint(-1,1.9){C};
\tkzMarkAngle[size = 0.4, mark = none](C,A,B);
\node[left] at (0.58,0.72) {$\delta$};
\node[below] at(1,0.7) {$\gamma(\psi)$};
\draw[densely dotted,thick,red] (1,0.7) -- (1.8,0.7);
\tkzDefPoint(1.6,1.7){D}
\tkzDefPoint(1.8,0.7){E}
\tkzMarkAngle[size = 0.4, mark = none, arc = ll](E,A,D);
\node[right] at (1.1,0.8) {$\psi$};
\tkzDefPoint(2,0.1){F}
\tkzMarkRightAngle[size = 0.2](D,A,C)
\end{tikzpicture}
\caption{Parametrization of the phase space of Birkhoff billiards.}\label{fig parameterPhaseSpaceBirk}
\end{center}
\end{figure}
The billiard map is a self map of this phase cylinder, and will be denoted by $T$. 
Assume that the billiard map inside $\gamma$ has a rotational invariant curve, $\alpha$, of $k$-periodic orbits, and that the rotation number of $\alpha$ is $\frac{r}{k}$, where $0<r<\frac{k}{2}$, and $r,k$ are coprime (the case where  $\frac{k}{2}<r<k$ is obtained by reversing orientation).
Let $R$ be the map of the phase cylinder that rotates oriented lines by $\frac{2\pi r}{k}$.
The curve $\gamma$ is also invariant under $R_{\frac{2\pi r}{k}}$, so we have
\[R(\psi,\delta)=(\psi+\frac{2\pi r}{k},\delta)\,.\]
In addition, since this rotation is an isometry that preserves $\gamma$, we have
\[R\circ T = T\circ R\,.\]
By a theorem of Birkhoff (see, e.g., \cite[Chapter I]{AST_1986__144__1_0}), the invariant curve $\alpha$ is the graph of a Lipschitz function, $\delta = d(\psi)$.
We assumed that every point on this curve is periodic.
Hence, by Aubry Mather theory (see, e.g., \cite[Theorem 5.8]{Bangert1988MatherSF}), every other rotational invariant curve with the same rotation number must coincide with $\alpha$.
Since the curve $R(\alpha)$ has the same rotation number as $\alpha$, the curve $\alpha$ must be invariant under $R$.
Therefore, the function $d$ must be $\frac{2\pi r}{k}$-periodic. 
Next, consider the function 
\[S:\R\slash(2\pi\Z)\to\R\slash(2\pi\Z)\,,\]
which describes the dynamics on the invariant curve $\alpha$, in the following sense:
\[T(\psi,d(\psi))=(S(\psi),d(S(\psi)))\,.\]
Consider further its strictly increasing lift, $\tilde{S}:\R\to\R$. 
The function $\tilde{S}$ satisfies the following identity
$\tilde{S}^k(\psi)=\psi+2\pi r$.
This is because the rotation number of $\alpha$ is $\frac{r}{k}$.
Next, evaluate both sides of the equality $T\circ R = R\circ T$ on the points $(\psi,d(\psi))$:\begin{gather*}
T\circ R(\psi,d(\psi))=T(\psi+\frac{2\pi r}{k},d(\psi))=\\
=T(\psi+\frac{2\pi r}{k},d(\psi+\frac{2\pi r}{k}))=\\
=(S(\psi+\frac{2\pi r}{k}),d(S(\psi+\frac{2\pi r}{k})))\,,
\end{gather*}
and,
\begin{gather*}
R\circ T(\psi,d(\psi))=R(S(\psi),d(S(\psi)))=\\
=(S(\psi)+\frac{2\pi r}{k},d(S(\psi)))\,.
\end{gather*}
Consequently, $\tilde{S}(\psi+\frac{2\pi r}{k})=\tilde{S}(\psi)+\frac{2\pi r}{k}$. 
As the next simple argument shows, this, together with $\tilde{S}^k(\psi)=\psi+2\pi r$, implies that $\tilde{S}(\psi)=\psi+\frac{2\pi r}{k}$ for all $\psi$.
\begin{lemma}\label{lem impliesIsAShift}
Let $f:\R\to\R$ be a strictly increasing function, $L>0$, and $k\in\N$, such that:
\begin{enumerate}
\item $f(t+\frac{L}{k})=f(t)+\frac{L}{k}$, for all $t\in\R$.\label{itm additivity}
\item $f^k (t)=t+L$, for all $t\in\R$. \label{itm periodic}
\end{enumerate}

Then $f(t)=t+\frac{L}{k}$, for all $t\in\R$. 
\end{lemma}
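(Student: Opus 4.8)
The plan is to reduce the problem to the elementary fact that a strictly increasing self-map of $\R$ whose $k$-th iterate is the identity must itself be the identity. To set this up, I would first rewrite the two hypotheses in a more structural form. Let $\tau\colon\R\to\R$ denote the shift $\tau(t)=t+\frac{L}{k}$. Condition \eqref{itm additivity} says precisely that $f$ commutes with $\tau$, that is $f\circ\tau=\tau\circ f$, while condition \eqref{itm periodic} says $f^k=\tau^k$, since $\tau^k(t)=t+L$. Thus the lemma is the assertion that two commuting strictly increasing maps with equal $k$-th powers must agree, when one of them is a shift.

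Next I would introduce the auxiliary function $h=\tau^{-1}\circ f$, explicitly $h(t)=f(t)-\frac{L}{k}$, which is strictly increasing as a composition of strictly increasing maps. Using that $f$ commutes with $\tau$, one computes $h^k=\tau^{-k}\circ f^k=\tau^{-k}\circ\tau^k=\mathrm{id}$. Concretely, iterating condition \eqref{itm additivity} gives $h^j(t)=f^j(t)-\frac{jL}{k}$ by induction on $j$, so that $h^k(t)=f^k(t)-L=t$ by condition \eqref{itm periodic}. Either way, the upshot is that $h$ is a strictly increasing $k$-th root of the identity.

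The crux of the argument is then the monotonicity step: a strictly increasing $h\colon\R\to\R$ with $h^k=\mathrm{id}$ is the identity. Indeed, if $h(t_0)>t_0$ at some point $t_0$, then applying the strictly increasing map $h$ repeatedly yields $t_0<h(t_0)<h^2(t_0)<\cdots<h^k(t_0)=t_0$, a contradiction; the case $h(t_0)<t_0$ is symmetric, so $h(t_0)=t_0$ for every $t_0$. Hence $h=\mathrm{id}$, which unwinds to $f=\tau$, that is $f(t)=t+\frac{L}{k}$ for all $t$, as claimed.

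This argument is short and essentially free of serious obstacles; the only point requiring care is the iteration step, where strict monotonicity of $f$ (and hence of $h$) is exactly what forbids $h$ from having genuine $k$-periodic points that are not fixed points. It is worth emphasizing that strict monotonicity is indispensable here: dropping it would allow honest period-$k$ behaviour, so the hypothesis that $f$ is strictly increasing is doing the essential work, and this is precisely why the lemma applies to the lift $\tilde S$ of the (orientation-preserving) dynamics on the invariant curve.
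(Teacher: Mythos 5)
Your proof is correct. It differs from the paper's in packaging rather than in substance, but the packaging is genuinely different: you conjugate by the shift $\tau(t)=t+\frac{L}{k}$, reduce to the statement that a strictly increasing map $h$ with $h^k=\mathrm{id}$ must be the identity, and prove that by chaining the inequalities $t_0<h(t_0)<h^2(t_0)<\cdots<h^k(t_0)=t_0$ along a single orbit. The paper instead argues directly by contradiction: from $f(t)<t+\frac{L}{k}$ it derives $f^j(t)<f^{j-1}(t)+\frac{L}{k}$ for $j=1,\dots,k$ and sums these $k$ inequalities, letting the sum telescope against $f^k(t)=t+L$. Both routes rest on the same two ingredients (strict monotonicity plus commutation with the shift), and indeed your final inequality $h^k(t_0)<t_0$ is exactly the paper's telescoped inequality $f^k(t_0)<t_0+L$ in disguise; your version buys a cleaner conceptual statement (increasing $k$-th roots of the identity are trivial) and avoids the summation, while the paper's is entirely self-contained in one computation. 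Your remarks that commutation of $f$ with $\tau$ yields commutation with $\tau^{-1}$, and that strict monotonicity is what rules out genuine period-$k$ points, are both accurate and correctly identify where the hypotheses are used.
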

\begin{proof}
Assume by contradiction, that for some $t\in\R$, we have, without loss of generality:
\[f(t)<t+\frac{L}{k}\,.\]
Using assumption \ref{itm additivity}, and monotonicity of $f$, we get, for $j=1,...,k$:
\[f^j(t)<f^{j-1}(t)+\frac{L}{k}\,.\]
For $j=k$, we get, by assumption \ref{itm periodic}
\[t+L<f^{k-1}(t)+\frac{L}{k}\,.\]
Sum all of these inequalities for $j=1,...,k$:
\[\sum_{j=1}^k f^j(t)<\sum_{j=1}^k(f^{j-1}(t)+\frac{L}{k})=\sum_{j=0}^{k-1}f^j(t)+L\,.\]
But left hand side is:
\[\sum_{j=1}^k f^j(t)=\sum_{j=1}^{k-1} f^j(t)+f^k(t)=L+\sum_{j=0}^{k-1} f^j(t)\,,\]
getting a contradiction.
\end{proof}

As a result, $$\tilde{S}(\psi+\frac{2\pi r}{k})=\psi+\frac{2\pi r}{k}\,,$$ and using the $\frac{2\pi r}{k}$ periodicity of $d$ already established, we get:
\[d(\psi)=d(\psi+\frac{2\pi r}{k})=d(\tilde{S}(\psi))\,.\]
So we see that the function $d$ is invariant under the restriction of the billiard map to $\alpha$.
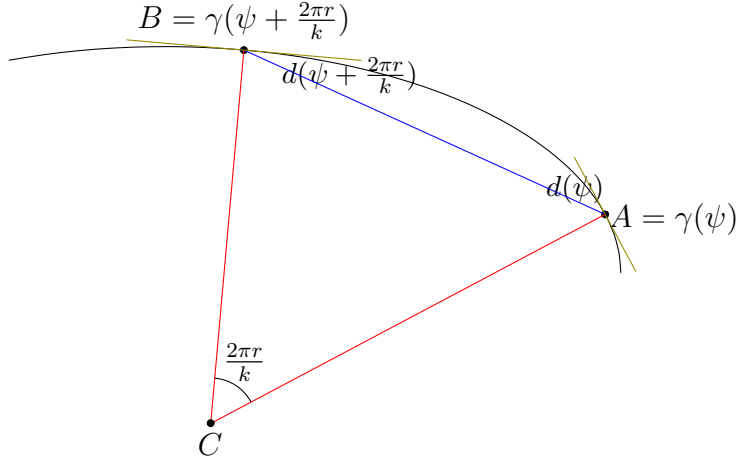
\begin{figure}
\begin{center}
\begin{tikzpicture}[scale = 1.5]
\draw[domain = 0:110, smooth, variable = \t, black] plot({4*cos(\t)},{2*sin(\t});
\draw[fill=black,black] (3.863,0.517) circle [radius = 0.03];
\draw[fill=black,black] (0.694,1.969) circle [radius = 0.03];
\node [right,black] at (3.8,0.48) {$A=\gamma(\psi)$};
\node [above,black] at (0.694,1.969) {$B=\gamma(\psi+\frac{2\pi r}{k})$};
\node [above,black] at (3.6,0.517) {\small{$d(\psi)$}};
\node [below,black] at (1.624,2.00) {\small{$d(\psi+\frac{2\pi r}{k})$}};
\draw[blue] (3.863,0.517) -- (0.694,1.969);
\draw[olive] (3.863+15*0.018,0.517-15*0.0337) -- (3.863-15*0.018,0.517+15*0.0337);
\draw[olive] (0.694 + 15*0.0687, 1.969 - 15 * 0.00606) -- (0.694 - 15* 0.0687, 1.969 + 15 * 0.00606);
\draw[red] (3.863,0.517)--(3.863-103*0.0337,0.517-103*0.018);
\draw[red] (0.694,1.969) -- (0.694 - 48*0.00606,1.969 - 48*0.0687);
\draw[fill=black,black] (0.4031,-1.3286) circle [radius = 0.03];
\node [above,black] at (0.7,-1.05) {$\frac{2\pi r}{k}$};
\node[below] at (0.4,-1.32) {$C$};
\tkzDefPoint(3.863,0.517){A};
\tkzDefPoint(0.694,1.969){B};
\tkzDefPoint(0.4031,-1.3286){C};

\tkzMarkAngle[size = 0.4, mark=none](A,C,B);
\end{tikzpicture}
\caption{Proving that $d(\psi)$ is constant, and equal to $\frac{\pi r}{k}$.}\label{fig dInvKCase}
\end{center}
\end{figure}

Finally, consider the triangle formed between the points $\gamma(\psi)$, $\gamma(\psi+\frac{2\pi r}{k})$, and the inner normals to $\gamma$ at these two points (this is triangle $\Delta ABC$ in Figure \ref{fig dInvKCase}).
 By definition of the parameter $\psi$, the angle between the two normals is $\frac{2\pi r}{k}$. 
By the definition of $d$, the angles between each normal and the chord connecting $\gamma(\psi)$ with $\gamma(\psi+\frac{2\pi r}{k})$ are $\frac{\pi}{2}-d(\psi)=\frac{\pi}{2}-d(\psi+\frac{2\pi r}{k})$. 
Comparing the sum of angles in this triangle to $\pi$, we see that $d(\psi)$ is constant and equal to $\frac{\pi r}{k}$. 
We then see that the curve $\gamma$ is a curve which satisfies the Gutkin property:
it admits a rotational invariant curve of constant angle $\frac{\pi r}{k}$. 
If $\gamma$ was not a circle, then by \cite[Corollary 7]{Gutkin_2011}, the angle $\frac{\pi r}{k}$ would solve an equation of the form:
\begin{equation}\label{gutkin}
\tan(nx)=n\tan(x).
\end{equation}

However, by \cite[Theorem 1]{cyr2011number}, this equation has no solutions among rational multiples of $\pi$.
We get a contradiction, meaning that $\gamma$ must be a circle, finishing the proof of Theorem \ref{theorem euclideanK}.

\section{Outer billiards - proof of Theorem \ref{theorem outer}} \label{section proofOuter}
In Subsection \ref{subsect bg} we recalled the twist map structure of the Outer billiard map.
Let $X$ be the exterior of $\gamma$.
 Choose an arbitrary parameter on $\gamma$, and then every point in the exterior of $\gamma$ can be uniquely written as $\gamma(t)+\lambda\dot{\gamma}(t)$ for some $t$ and $\lambda>0$. 
 The Outer billiard map can be viewed as a map $T:X\to X$, which satisfies $T(t,\lambda)=(t_1,\lambda_1)$ if and only if $\gamma(t)+\lambda\dot{\gamma}(t)=\gamma(t_1)-\lambda_1\dot{\gamma}(t_1)$, see Figure \ref{fig OuterBilliardReflectionAsterisk}.
Let $A:\R^2\to\R^2$ be the linear map as in the formulation of the theorem, which is of order $k$, meaning $A^k=\textrm{Id}=I$.
We assumed that $k\ge 3$, so $A$ is conjugate to a rotation by angle $\frac{2\pi r}{k}$, for some $r\in\N$ which is coprime to $k$, and in particular $\det A =1 $.

Suppose that for the Outer billiard map on the exterior of $\gamma$ there exists a rotational invariant curve $\alpha$ of $k$-periodic orbits, and rotation number $\frac{r}{k}$.
  The proof of Theorem \ref{theorem outer} relies on a special parametrization of $\gamma$ in which the restriction of the dynamics to $\alpha$ is given by a shift, and in which $\alpha$ is a ``horizontal" circle, $\lambda=\textrm{const}$.
  Such a parametrization is constructed in several steps.
  In fact, the first step is shared between the Outer billiards case, and the Symplectic billiards case, considered in the next section.
  Here and below, we denote by $[u,v]$ the determinant of the two vectors $u$,$v$.
  \begin{lemma}\label{lem actionOfAOnGamma}
  Let $\gamma$ be a $C^2$-smooth, planar, strictly convex curve, which is invariant under a linear map $A$ of order $k\ge 3$, and let $0<r<\frac{k}{2}$ be an integer coprime to $k$. 
 Assume that $\gamma$ is $L$-periodic, and parametrized so that
  $[\gamma(t),\dot{\gamma}(t)]=1$
  for all $t$.
   Then there exists $m\in\N$ coprime with $k$ such that:
  $B\gamma(t)=\gamma(t+\frac{Lr}{k})$ for all $t$, where $B=A^m$ is another order $k$ element of $\mathrm{GL}(2,\R)$. 
  Moreover, by rescaling $\gamma$ we may assume that $L=2\pi$. 
  \end{lemma}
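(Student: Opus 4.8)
The plan is to establish the claimed parametrization of $\gamma$ in which the linear symmetry acts as a translation, using the existence of the invariant curve of $k$-periodic orbits together with the rigidity supplied by Aubry--Mather theory, exactly as in the Birkhoff case. First I would choose the parameter $t$ so that $[\gamma(t),\dot\gamma(t)]=1$; this is the standard \emph{affine-length}-type normalization, and it exists precisely because $\det A=1$, so that the determinant $[\gamma,\dot\gamma]$ is invariant under $A$ and hence defines an $A$-equivariant reparametrization. With this choice $\gamma$ is $L$-periodic for some $L$, which after rescaling $\gamma$ (the Outer and Symplectic generating functions are scaling-covariant) can be taken to be $2\pi$.

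The heart of the argument is to promote the \emph{geometric} symmetry $A\gamma(t)=\gamma(\sigma(t))$ into the statement that $\sigma$ is an honest shift by $\tfrac{Lr}{k}$. Since $A$ is a linear isomorphism fixing $\gamma$ and preserving $[\gamma,\dot\gamma]=1$, it induces an orientation-preserving diffeomorphism $\sigma$ of $\R/L\Z$ with $A\gamma(t)=\gamma(\sigma(t))$; because $A$ has order $k$, the map $\sigma$ satisfies $\sigma^k=\mathrm{id}$, so it has a rational rotation number $\tfrac{p}{k}$ with $p$ coprime to $k$. The plan is then to invoke the commutation $A\circ T=T\circ A$ (valid because $A$ is an affine symmetry of $\gamma$ and the Outer/Symplectic maps are affinely equivariant) on the invariant curve $\alpha$: just as in the Birkhoff proof, the image $A(\alpha)$ is a rotational invariant curve with the same rotation number $\tfrac{r}{k}$, so by the Aubry--Mather uniqueness of rotational invariant curves carrying only periodic orbits of a given rational rotation number, $A(\alpha)=\alpha$. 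Combining the conjugacy relation for $\sigma$ with $\sigma^k=\mathrm{id}$ via Lemma \ref{lem impliesIsAShift} (applied to the lift $\tilde\sigma$, after checking $\tilde\sigma(t+\tfrac{L}{k})=\tilde\sigma(t)+\tfrac{L}{k}$) forces $\sigma$ to be the exact shift $t\mapsto t+\tfrac{Lp}{k}$.

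It then remains to match $p$ with the desired $\tfrac{Lr}{k}$. Here one uses that the rotation number of the restricted dynamics on $\alpha$ is $\tfrac{r}{k}$, while the symmetry acts by shifting the parameter by $\tfrac{Lp}{k}$; choosing the power $m$ coprime to $k$ so that $mp\equiv r\pmod{k}$ and setting $B=A^m$ converts the shift into $\tfrac{Lr}{k}$, since $B$ is again an order-$k$ element of $\mathrm{GL}(2,\R)$ (the order is preserved because $m$ is coprime to $k$). This gives $B\gamma(t)=\gamma(t+\tfrac{Lr}{k})$, as claimed, and a final rescaling sets $L=2\pi$.

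The main obstacle I anticipate is the careful verification that the geometric symmetry $\sigma$ really has rotation number $\tfrac{p}{k}$ \emph{compatible} with the dynamical rotation number $\tfrac{r}{k}$, rather than some unrelated power; this is where one must use the invariance $A(\alpha)=\alpha$ together with the fact that the $k$-periodic orbits on $\alpha$ are permuted by $A$ in the same cyclic pattern that the dynamics advances them, so that the integer $m$ realizing $mp\equiv r$ exists and is coprime to $k$. The normalization $[\gamma,\dot\gamma]=1$ being genuinely $A$-equivariant (so that $\sigma$ is well defined on the \emph{normalized} parameter) is the other point that needs to be checked rather than assumed, but it follows directly from $\det A=1$.
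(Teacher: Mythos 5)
Your plan routes the entire argument through the billiard dynamics (the invariant curve $\alpha$, the commutation $A\circ T=T\circ A$, and Aubry--Mather uniqueness), but the lemma has no dynamical hypotheses at all: it is a statement purely about a convex curve $\gamma$, its linear symmetry $A$, and the normalization $[\gamma(t),\dot\gamma(t)]=1$. The paper proves it in three lines of calculus: writing $A\gamma(t)=\gamma(H(t))$, differentiating, and taking determinants with $A\gamma(t)$ gives $H'(t)=\det A=1$, so $H$ is a translation; then $A^k=I$ forces $H(t)=t+\frac{jL}{k}$, distinctness of the orbit $\gamma(t),A\gamma(t),\dots,A^{k-1}\gamma(t)$ forces $\gcd(j,k)=1$, and one picks $m$ with $mj\equiv r\pmod k$ to get $B=A^m$. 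Keeping the proof dynamics-free matters here, because the lemma is reused in the Symplectic and (with the normalization $g_K(\dot\gamma)=1$ in place of the determinant condition) Minkowski sections, and because the later lemmas that \emph{do} use the dynamics take this one as input.

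Beyond the architectural issue, there is a concrete gap in your central step. You propose to apply Lemma \ref{lem impliesIsAShift} to the lift $\tilde\sigma$ ``after checking $\tilde\sigma(t+\tfrac{L}{k})=\tilde\sigma(t)+\tfrac{L}{k}$,'' but you never say where that identity would come from, and it is not available: in the Birkhoff argument that identity is obtained for the \emph{dynamics} $\tilde S$ by commuting it with a \emph{second}, independently known shift (the one induced by the rotation), whereas here $\sigma$ is the only map in play and commuting it with itself gives nothing. Note also that $\sigma^k=\mathrm{id}$ alone does not force $\sigma$ to be a rigid shift --- a finite-order circle homeomorphism is only topologically conjugate to a rotation, not equal to one in a given chart --- so some additional input is genuinely required. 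That input is exactly the computation $\sigma'(t)=\det A=1$ coming from the equivariance of the normalization $[\gamma,\dot\gamma]=1$. You mention this equivariance in passing at the end as ``the other point that needs to be checked,'' but it is in fact the whole proof; once you have $\sigma'(t)=1$ you are done without any appeal to $\alpha$, to Aubry--Mather theory, or to Lemma \ref{lem impliesIsAShift}.
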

  \begin{proof}
  Since $\gamma$ is invariant under $A$, for all $t$ there exists $H(t)$ such that:
  \begin{equation}\label{eq linearParameterChange}
  A\gamma(t)=\gamma(H(t))\,.
  \end{equation}
     Differentiating both sides, we get:
  \[A\dot{\gamma}(t)=H'(t)\dot{\gamma}(H(t))\,.\]
  Taking determinants with $\gamma(H(t))=A\gamma(t)$ we obtain:
  \[\Big[A\dot{\gamma}(t),A\gamma(t)\Big]=H'(t)\Big[\dot{\gamma}(H(t)),\gamma(H(t))\Big]\,.\]
  Owing to the fact that $[\gamma(t),\dot{\gamma}(t)]=1$, and properties of determinants, we get that $H'(t)=\det A = 1$, so $H$ is linear. 
  Iterating equation \eqref{eq linearParameterChange} $k$ times, gives us:
  \[\gamma(t)=A^k \gamma(t)=\gamma(H^k(t))\,.\]
  Which means that $H(t)=t+\frac{jL}{k}$, where $j\in\N$. 
  Since $A$ is conjugated to a rotation of order $k$ and $\gamma(t)\neq 0$, the points
   \[\gamma(t),A\gamma(t),...,A^{k-1}\gamma(t)\,,\]
must all be distinct.
In addition,  these points must coincide with the points $\gamma(t)$,$\gamma(t+\frac{Lj}{k})$,...,$\gamma(t+\frac{Lj(k-1)}{k})$, meaning that they are also distinct.
This implies that $j$ is coprime with $k$, and that there exists $i$ for which $\gamma(t+\frac{iLj}{k})=\gamma(t+\frac{Lr}{k})$.
Therefore, we can chose a power of $A$, for which $A^m\gamma(t)=\gamma(t+\frac{Lr}{k})$.
Then, the $k$ distinct points $\gamma(t)$,$\gamma(t+\frac{Lr}{k})$,...,$\gamma(t+\frac{Lr(k-1)}{k})$ coincide with the points $\gamma(t)$,$A^m\gamma(t)$,...,$A^{m(k-1)}\gamma(t)$, so these points are also distinct.
As a result, $m$ must be coprime with $k$, and therefore $B=A^m$ is also an order $k$ element of $\mathrm{GL}(2,\R)$.
Finally, consider the following rescaling of $\gamma$:
\[\tilde{\gamma}(t)=\sqrt{\frac{2\pi}{L}}\gamma\Big(\frac{L}{2\pi}t\Big)\,.\]
Then it is immediate to check that this curve is $2\pi$-periodic, satisfies the requirement $\Big[\tilde{\gamma}(t),\dot{\tilde{\gamma}}(t)\Big]=1$, and $B\tilde{\gamma}(t)=\tilde{\gamma}(t+\frac{2\pi r}{k})$.
  \end{proof}For convenience, we keep calling the curve with the parametrization constructed in Lemma \ref{lem actionOfAOnGamma} $\gamma$. 
 Using this parametrization, we can describe the tangency points of the $k$-periodic orbit starting at $\gamma(t)$.
  
 \begin{lemma}\label{lem tangencyPointsOuter}
 Let $\gamma$ be a $C^2$-smooth, planar, strictly convex curve. 
 Assume that $\gamma$ is invariant under a linear map $A$ of order $k$, and that the Outer billiard map on the exterior of $\gamma$ has a rotational invariant curve of $k$-periodic orbits, $\alpha$.
 Then, the tangency points of the $k$-periodic orbit starting at $\gamma(t)$ are $\gamma(t)$,$B\gamma(t)$,...,$B^{k-1}\gamma(t)$, where $B$ is as in Lemma \ref{lem actionOfAOnGamma}.
 \end{lemma}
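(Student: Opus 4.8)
The plan is to transplant, almost verbatim, the argument used for Theorem \ref{theorem euclideanK} into the Outer billiard setting: manufacture a symmetry of the phase cylinder that commutes with $T$, use Birkhoff's graph theorem together with Aubry--Mather uniqueness to force this symmetry to preserve the invariant curve $\alpha$, and then apply Lemma \ref{lem impliesIsAShift} to see that the induced dynamics on $\alpha$ is a rigid rotation. Once the orbit parameters are shown to advance by a fixed amount, the tangency points are read off immediately.

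First I would translate the linear symmetry $B$ supplied by Lemma \ref{lem actionOfAOnGamma} into a map of the exterior cylinder. Since $B$ is linear, it sends $\gamma(t)+\lambda\dot{\gamma}(t)$ to $B\gamma(t)+\lambda B\dot{\gamma}(t)$; differentiating the identity $B\gamma(t)=\gamma(t+\frac{2\pi r}{k})$ gives $B\dot{\gamma}(t)=\dot{\gamma}(t+\frac{2\pi r}{k})$, so in the coordinates $(t,\lambda)$ the action of $B$ is precisely the shift
\[R(t,\lambda)=\Big(t+\tfrac{2\pi r}{k},\lambda\Big).\]
Because $B$ is linear and preserves $\gamma$, and because the Outer billiard construction (pick the right tangency point and reflect through it) is equivariant under invertible affine maps, $B$ carries billiard orbits to billiard orbits; hence $R\circ T=T\circ R$.

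Next I would invoke the twist-map machinery, exactly as in Section \ref{section birkhoff}. By Birkhoff's theorem the invariant curve $\alpha$ is the graph of a Lipschitz function $\lambda=\ell(t)$. Since the rotation number $\frac{r}{k}$ is rational and, by hypothesis, every point of $\alpha$ is periodic, Aubry--Mather theory guarantees that $\alpha$ is the unique rotational invariant curve of rotation number $\frac{r}{k}$. As $R$ commutes with $T$, the restriction of $T$ to $R(\alpha)$ is conjugate via $R$ to the restriction of $T$ to $\alpha$, so $R(\alpha)$ is again an invariant curve of rotation number $\frac{r}{k}$; uniqueness forces $R(\alpha)=\alpha$, and in particular $\ell$ is $\frac{2\pi r}{k}$-periodic. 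Writing $S$ for the circle map induced by $T$ on $\alpha$, so that $T(t,\ell(t))=(S(t),\ell(S(t)))$, and letting $\tilde{S}$ be its increasing lift, the relation $R\circ T=T\circ R$ evaluated along $\alpha$ yields $\tilde{S}(t+\frac{2\pi r}{k})=\tilde{S}(t)+\frac{2\pi r}{k}$, while the rotation number gives $\tilde{S}^{k}(t)=t+2\pi r$. Lemma \ref{lem impliesIsAShift} then forces $\tilde{S}(t)=t+\frac{2\pi r}{k}$ for all $t$.

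Finally, I would read off the tangency points. The points of the orbit through $(t,\ell(t))$ have base parameters $t,\ t+\frac{2\pi r}{k},\ \dots,\ t+\frac{(k-1)2\pi r}{k}$, and the set of tangency points used along one full period is exactly the set of base points $\gamma(\cdot)$ of the orbit points; using $\gamma(t+\frac{2\pi r}{k})=B\gamma(t)$ these are $\gamma(t),B\gamma(t),\dots,B^{k-1}\gamma(t)$, as claimed. I expect the only delicate point to be confirming that Birkhoff's graph theorem and the Aubry--Mather uniqueness statement are genuinely available for the Outer billiard twist map on the half-cylinder $\lambda>0$; granting these, the argument is formally identical to the Birkhoff case, the sole new geometric input being the clean identification of the symmetry $B$ with the shift $R$ in the parametrization of Lemma \ref{lem actionOfAOnGamma}.
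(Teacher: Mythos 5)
Your proposal is correct and follows essentially the same route as the paper: identify the symmetry $B$ with the shift $R(t,\lambda)=(t+\frac{2\pi r}{k},\lambda)$ via the parametrization of Lemma \ref{lem actionOfAOnGamma}, show $R\circ T=T\circ R$, use Birkhoff's graph theorem and Aubry--Mather uniqueness to get $R(\alpha)=\alpha$, and apply Lemma \ref{lem impliesIsAShift} to conclude $\tilde{S}(t)=t+\frac{2\pi r}{k}$. The only cosmetic difference is that the paper verifies commutativity by applying $B$ directly to the defining relation $\gamma(t)+\lambda\dot{\gamma}(t)=\gamma(t_1)-\lambda_1\dot{\gamma}(t_1)$ rather than appealing to affine equivariance, which amounts to the same computation.
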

 \begin{proof}
 Let $\frac{r}{k}$ be the rotation number of $\alpha$. 
With this $r$, parametrize $\gamma$ as in Lemma \ref{lem actionOfAOnGamma}.
  The curve $\gamma$ is invariant under $B$.
  Consider the map $R$ of the phase space, $R(t,\lambda)=(t+\frac{2\pi r}{k},\lambda)$.
  This map is induced by the action of $B$ on $X$.
 We show that  $$R\circ T = T\circ R\,.$$
 Indeed, if $T(t,\lambda)=(t_1,\lambda_1)$ then $\gamma(t)+\lambda\dot{\gamma}(t)=\gamma(t_1)-\lambda_1\dot{\gamma}(t_1)$.
  Multiplying both sides by $B$, and using the fact that $B\gamma(t)=\gamma(t+\frac{2\pi r}{k})$, and that $B\dot{\gamma}(t)=\dot{\gamma}(t+\frac{2\pi r}{k})$, we get:
 \[\gamma(t+\frac{2\pi r}{k})+\lambda\dot{\gamma}(t+\frac{2\pi r}{k})=\gamma(t_1+\frac{2\pi r}{k})-\lambda_1\dot{\gamma}(t_1+\frac{2\pi r}{k})\,.\]
 This means that $T(t+\frac{2\pi r}{k},\lambda)=(t_1+\frac{2\pi r}{k},\lambda_1)$, and this is equivalent to $T\circ R(t,\lambda) = R\circ T(t,\lambda)$.
By Birkhoff's theorem, the rotational invariant curve $\alpha$ is a graph of a function $\lambda=\lambda(t)$.
  Just like in the proof of Theorem \ref{theorem euclideanK} in Section \ref{section birkhoff}, since every point of $\alpha$ is periodic, $\alpha$ is preserved under $R$, and therefore the function $\lambda(t)$ is $\frac{2\pi r}{k}$-periodic. 
  Denote by 
  \[S:\R\slash(2\pi\Z)\to\R\slash(2\pi\Z)\]
   the function which describes the restriction of the dynamics to $\alpha$:
  \[T(t,\lambda(t))=(S(t),\lambda(S(t)))\,.\]
  Denote its lift to $\R$ by $\tilde{S}$. 
  Now evaluate the equality $T\circ R = R\circ T$ on the points $(t,\lambda(t))$ to get:
\begin{gather*}
T\circ R(t,\lambda(t))=T(t+\frac{2\pi r}{k},\lambda(t))=T(t+\frac{2\pi r}{k},\lambda(t+\frac{2\pi r}{k})) = \\
=(S(t+\frac{2\pi r}{k}),\lambda(S(t+\frac{2\pi r}{k})))\,,
\end{gather*}
and, 
  \[R\circ T(t,\lambda(t)) = R(S(t),\lambda(S(t)))=(S(t)+\frac{2\pi r}{k},\lambda(S(t)))\,.\]
  So as a result, we see that
\[  \tilde{S}(t+\frac{2\pi r}{k})=\tilde{S}(t)+\frac{2\pi r}{k}\,.\]
The function $\tilde{S}$ is increasing, and since $\alpha$ is an invariant curve of rotation number $\frac{r}{k}$, we have $\tilde{S}^k(t)=t+2\pi r$.
We can use Lemma \ref{lem impliesIsAShift} to obtain that $\tilde{S}(t)=t+\frac{2\pi r}{k}$ for all $t\in\R$.
This means that the tangency points of the $k$-periodic orbit are exactly at the points $\gamma(t)$, $\gamma(t+\frac{2\pi r}{k})$,...,$\gamma(t+\frac{2\pi r(k-1)}{k})$.
But since $\gamma(t+\frac{2\pi r}{k})=B\gamma(t)$, these points are $\gamma(t)$,$B\gamma(t)$,...,$B^{k-1}\gamma(t)$.
  \end{proof}
  Now, write the Outer billiards equation for two consecutive points of the $k$-periodic orbit:
\begin{equation}\label{eq outerBilliardEqInvCurve}
\gamma(t)+\lambda(t)\dot{\gamma}(t)=\gamma(t+\frac{2\pi r}{k})-\lambda(t+\frac{2\pi r}{k})\dot{\gamma}(t+\frac{2\pi r}{k})\,.
\end{equation}
  Since $\lambda$ is $\frac{2\pi r}{k}$-periodic, and $\gamma(t+\frac{2\pi r}{k})=B\gamma(t)$, we can write:
\begin{equation}\label{eq OuterBilliardEqInvCurveInvForm}
\lambda(t)(B+I)\dot{\gamma}(t)=(B-I)\gamma(t)\,.
\end{equation}  
Equation \eqref{eq OuterBilliardEqInvCurveInvForm} was derived from the parametrization of $\gamma$ from Lemma \ref{lem actionOfAOnGamma}, but in fact, it holds for any parametrization of $\gamma$, changing the function $\lambda(t)$ if necessary.
The next step is to find a parametrization for which the function $\lambda(t)$ will be constant.
The construction will rely on the following lemma, which will also be used in the proof of Theorem \ref{theorem symplectic}.
\begin{lemma}\label{lem ODElemma}
Let $G$ be a positive, $C^1$-smooth, $P$-periodic function on $\R$. Then there exists $a>0$ and a $C^2$-smooth increasing function $f:\R\to\R$ such that $af'(s)=G(f(s))$, and $f(s+P)=f(s)+P$ for all $s\in\R$.
\end{lemma}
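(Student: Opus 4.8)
The plan is to reduce the problem to separation of variables and then to pin down the constant $a$ using the shift condition. Since $G$ is positive, $C^1$, and $P$-periodic, the function $1/G$ is positive and $C^1$, and $G$ is bounded between positive constants $0<m\le G\le M$. Define
\[
\Phi(x)=\int_0^x\frac{du}{G(u)}\,.
\]
Then $\Phi$ is $C^2$ with $\Phi'=1/G>0$, so $\Phi$ is strictly increasing; moreover $\Phi(x)\ge x/M$ for $x>0$ and $\Phi(x)\le x/m$ for $x<0$, so $\Phi\to\pm\infty$ and $\Phi:\R\to\R$ is a $C^2$ increasing bijection. By the inverse function theorem (using $\Phi'\neq 0$ everywhere), $\Phi^{-1}$ is also $C^2$ and increasing.

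Next I would record the quasi-periodicity of $\Phi$ coming from the periodicity of $G$. Writing $T=\int_0^P \frac{du}{G(u)}>0$, the $P$-periodicity of $G$ gives $\int_x^{x+P}\frac{du}{G(u)}=T$ for every $x$, hence
\[
\Phi(x+P)=\Phi(x)+T\qquad\text{for all }x\in\R\,.
\]
This is the crucial identity: it says $\Phi$ intertwines the shift by $P$ on the target with the shift by $T$ on the source.

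With this in hand, set $a=P/T>0$ and define $f(s)=\Phi^{-1}(s/a)$. Then $f$ is $C^2$ and increasing as a composition of such maps. Differentiating the relation $\Phi(f(s))=s/a$ and using $\Phi'=1/G$ yields $f'(s)/G(f(s))=1/a$, i.e. $af'(s)=G(f(s))$, which is the desired ODE. For the shift condition, compute $\Phi(f(s+P))=(s+P)/a=s/a+P/a=s/a+T$, while $\Phi(f(s)+P)=\Phi(f(s))+T=s/a+T$; since $\Phi$ is injective we conclude $f(s+P)=f(s)+P$.

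There is no serious obstacle here, as the construction is essentially explicit; the only point requiring attention is that the shift condition is not automatic but forces the specific value $a=P/T$. Any positive $a$ solves the ODE, but only this choice matches the period $T$ of the source variable to the period $P$ of $f$, so that $f$ commutes with translation by $P$. (One is of course free to compose $f$ with a shift in $s$, corresponding to the integration constant, without affecting any of the stated properties.)
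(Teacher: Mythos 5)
Your proof is correct. It solves the same autonomous ODE as the paper, but by explicit quadrature rather than by abstract ODE theory: you integrate $1/G$ to get $\Phi$, observe the quasi-periodicity $\Phi(x+P)=\Phi(x)+T$ with $T=\int_0^P du/G(u)$, and then invert, which hands you the constant $a=P/T$ in closed form. The paper instead fixes an arbitrary $a$, invokes existence and uniqueness for the initial value problem $af'=G(f)$, $f(0)=0$, uses uniqueness to show $f(s+c)=f(s)+P$ where $c$ is defined by $f(c)=P$, and finally rescales the independent variable to force $c=P$ (which changes $a$ to an unspecified value). Your route avoids the appeal to the uniqueness theorem and the rescaling step, and makes the normalization of $a$ transparent; the paper's route generalizes more readily to situations where the ODE is not explicitly integrable. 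All the regularity claims in your argument check out: $1/G$ is $C^1$, so $\Phi$ is $C^2$ with nowhere-vanishing derivative, hence $\Phi^{-1}$ and $f$ are $C^2$, and the two-sided bounds $0<m\le G\le M$ correctly give surjectivity of $\Phi$.
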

\begin{proof}
For arbitrary $a>0$, consider the ordinary differential equation:
\begin{equation} \label{eq ODEgeneral}
af'(s)=G(f(s))\,.
\end{equation}
This is an autonomous equation, and $G$ and its derivative are bounded, so every solution is defined over $\R$, and the set of solutions is closed under shifting the variable: if $f(s)$ is a solution, then so is $f(s-s_0)$. 
Also, for all $\lambda>0$, if $f(s)$ is a solution, then so will be $f(\lambda s)$, but with a different constant $a$ in the equation. 
Finally, if $f(s)$ is a solution, then so is $f(s)+P$, since $G$ is $P$-periodic. 
Since $G$ is $C^1$-smooth, then every solution is $C^2$-smooth.
Now, fix an arbitrary $a$, and consider the solution to equation \eqref{eq ODEgeneral} which satisfies $f(0)=0$. 
Since $G$ is bounded from below by a positive constant, $\inf f' > 0$, which means that $\lim\limits_{s\to\infty} f(s)=\infty$. 
Therefore, we can find $c>0$ such that $f(c)=P$. 
As a result, the two functions $f(s)+P$ and $f(s+c)$ are solutions to equation \eqref{eq ODEgeneral} with the same $a$, and they coincide at $0$, so they must be equal: $f(s+c)=f(s)+P$. 
By rescaling, which might change $a$, we may assume that $c=P$. 
We therefore found a solution to equation \eqref{eq ODEgeneral} for some $a>0$, which satisfies $f(s+P)=f(s)+P$ for all $s$.
Also, since $G$ is positive, the function $f$ is increasing, as required.  
\end{proof}
Using this lemma, we will construct the required parametrization of $\gamma$:
  \begin{lemma}\label{lem goodParameterOuter}
  Let $\gamma$, $A$, $m$,$B$ be as in Lemma \ref{lem actionOfAOnGamma}.
  If the Outer billiard map in the exterior of $\gamma$ has a rotational invariant curve of $k$-periodic orbits, $\alpha$, then there exists a $C^2$-reparamterization of $\gamma$, $\mu(s)=\gamma(f(s))$ for which:
  \begin{enumerate}
  \item $\mu(s+\frac{2\pi r}{k})=B\mu(s)$ for all $s$.
 \item The function $\lambda(s)$ determined by this parametrization in equation \eqref{eq OuterBilliardEqInvCurveInvForm} is constant.
 \item $\mu$ is $2\pi$-periodic.
  \end{enumerate}
  \end{lemma}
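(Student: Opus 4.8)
The plan is to realize the desired reparametrization as a solution of the autonomous ODE furnished by Lemma \ref{lem ODElemma}, applied to the function $\lambda$ appearing in equation \eqref{eq OuterBilliardEqInvCurveInvForm}. Starting from the parametrization of Lemma \ref{lem actionOfAOnGamma}, equation \eqref{eq OuterBilliardEqInvCurveInvForm} reads $\lambda(t)(B+I)\dot{\gamma}(t)=(B-I)\gamma(t)$. Since $k\ge 3$, the eigenvalues $e^{\pm 2\pi i\tilde r/k}$ of $B$ satisfy $e^{\pm 2\pi i\tilde r/k}+1\neq 0$, so $B+I$ is invertible and $\lambda(t)$ is the well-defined ratio of the two parallel vectors $(B-I)\gamma(t)$ and $(B+I)\dot{\gamma}(t)$. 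As $\gamma$ is $C^2$, this makes $\lambda$ a $C^1$ function, and it is positive because every point $\gamma(t)+\lambda(t)\dot{\gamma}(t)$ lies in the exterior $X$. A reparametrization $\mu(s)=\gamma(f(s))$ with $f$ increasing changes $\dot{\mu}(s)=f'(s)\dot{\gamma}(f(s))$, and substituting into \eqref{eq OuterBilliardEqInvCurveInvForm} shows that the new $\lambda$-function is $\lambda_\mu(s)=\lambda(f(s))/f'(s)$. Thus the requirement that $\lambda_\mu$ equal a constant $a$ is exactly the ODE $af'(s)=\lambda(f(s))$ of Lemma \ref{lem ODElemma}.

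The step I expect to be the crux is choosing the correct period to feed into Lemma \ref{lem ODElemma}. It is not enough to use the period $\frac{2\pi r}{k}$ of $\lambda$ coming directly from Lemma \ref{lem tangencyPointsOuter}: with $P=\frac{2\pi r}{k}$ the lemma would produce $f$ with $f(s+\frac{2\pi r}{k})=f(s)+\frac{2\pi r}{k}$ only, and then $\mu=\gamma\circ f$ would be $2\pi r$-periodic (covering $\gamma$ $r$ times) rather than $2\pi$-periodic. I will instead show that $\lambda$ is in fact $\frac{2\pi}{k}$-periodic. Since the symmetry group $\langle A\rangle$ is cyclic, hence abelian, and acts on the parameter through all the shifts $\frac{2\pi\ell}{k}$, $\ell=0,\dots,k-1$ (recall $A\gamma(t)=\gamma(t+\frac{2\pi j}{k})$ with $j$ coprime to $k$), there is a power $C=A^{i_0}$ of $A$ with $C\gamma(t)=\gamma(t+\frac{2\pi}{k})$, whence $C\dot{\gamma}(t)=\dot{\gamma}(t+\frac{2\pi}{k})$. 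Because $C$ commutes with $B=A^m$, and hence with $B\pm I$, substituting $t+\frac{2\pi}{k}$ into \eqref{eq OuterBilliardEqInvCurveInvForm}, pulling $C$ out, and cancelling it gives $\lambda(t+\frac{2\pi}{k})(B+I)\dot{\gamma}(t)=(B-I)\gamma(t)$; comparing with \eqref{eq OuterBilliardEqInvCurveInvForm} and using $(B+I)\dot{\gamma}(t)\neq 0$ yields $\lambda(t+\frac{2\pi}{k})=\lambda(t)$.

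With this in hand, I apply Lemma \ref{lem ODElemma} to $G=\lambda$ with $P=\frac{2\pi}{k}$, obtaining $a>0$ and an increasing $C^2$ function $f$ with $af'(s)=\lambda(f(s))$ and $f(s+\frac{2\pi}{k})=f(s)+\frac{2\pi}{k}$, so that $\mu=\gamma\circ f$ is $C^2$. The translation identity for $f$, iterated $r$ times and $k$ times, gives $f(s+\frac{2\pi r}{k})=f(s)+\frac{2\pi r}{k}$ and $f(s+2\pi)=f(s)+2\pi$. The first, together with $B\gamma(t)=\gamma(t+\frac{2\pi r}{k})$ from Lemma \ref{lem actionOfAOnGamma}, gives $\mu(s+\frac{2\pi r}{k})=\gamma(f(s)+\frac{2\pi r}{k})=B\gamma(f(s))=B\mu(s)$, which is property 1; the second, with the $2\pi$-periodicity of $\gamma$, gives $\mu(s+2\pi)=\gamma(f(s)+2\pi)=\mu(s)$, which is property 3; and the defining ODE gives $\lambda_\mu(s)=\lambda(f(s))/f'(s)=a$, which is property 2. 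This completes the construction.
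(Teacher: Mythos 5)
Your proposal is correct and follows essentially the same route as the paper: the function you feed into Lemma \ref{lem ODElemma} (the $\lambda(t)$ of equation \eqref{eq OuterBilliardEqInvCurveInvForm} in the original parametrization) coincides with the paper's $G(t)=\bigl[(B-I)\gamma(t),\dot{\gamma}(t)\bigr]/\bigl[B\dot{\gamma}(t),\dot{\gamma}(t)\bigr]$, and your key observation that one must establish $\frac{2\pi}{k}$-periodicity (not merely $\frac{2\pi r}{k}$-periodicity) to get a genuine $2\pi$-periodic reparametrization is exactly the point the paper makes, argued there by cancelling $\det B^{\ell}=1$ where yours cancels the invertible commuting matrix $C$. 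The remaining verifications (transformation rule $\lambda_\mu=\lambda\circ f/f'$, properties 1--3) match the paper's computation.
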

  \begin{proof}
Let $G(t)$ be the following function (where $\gamma$ is parametrized by the parametrization of Lemma \ref{lem actionOfAOnGamma}):
\[G(t)=\frac{\Big[(B-I)\gamma(t),\dot{\gamma}(t)\Big]}{\Big[B\dot{\gamma}(t),\dot{\gamma}(t)\Big]}\,.\]
This function is well defined: the vectors $\dot{\gamma}(t)$ and $B\dot{\gamma}(t)$ are the directions of the tangents to $\gamma$ at two consecutive points of the Outer billiard orbit;
as such, these tangents must intersect, meaning that the denominator does not vanish. 
Also, the vector $(B-I)\gamma(t)$ is the vector between the first point of the orbit, and the second one. 
We therefore see that the vector pairs in both the numerator and the denominator are oriented negatively, so $G(t)>0$. 
The function $G$ is $C^1$-smooth (since $\gamma$ is $C^2$-smooth), and we show that it is $\frac{2\pi}{k}$-periodic.
Since $r$ is coprime to $k$, there exists $\l\in\N$ for which $B^\l\gamma(t)=\gamma(t+\frac{2\pi}{k})$.
As a result, when computing $G(t+\frac{2\pi}{k})$, both numerator and denominator will have an extra factor of $\det B^\l = 1$.
We conclude that the function $G$ is smooth, positive, and $\frac{2\pi}{k}$-periodic.
By Lemma \ref{lem ODElemma}, there exists $\lambda>0$ and a $C^2$-smooth increasing $f:\R\to\R$ which satisfies:
\begin{equation}\label{eq ODEOuter}
\lambda f'(s)=G(f(s))\,,
\end{equation}
and in addition $f(s+\frac{2\pi}{k})=f(s)+\frac{2\pi}{k}$ for all $s\in\R$.
This implies that for all $s\in\R$ we also have $f(s+\frac{2\pi r}{k})=f(s)+\frac{2\pi r}{k}$.
Let $\mu(s)=\gamma(f(s))$ be a $C^2$-reparametrization of~$\gamma$. 
We show that this parametrization has all the required properties. 
Indeed:
\[\mu(s+\frac{2\pi r}{k})=\gamma(f(s+\frac{2\pi r}{k}))=\gamma(f(s)+\frac{2\pi r}{k})=B\gamma(f(s))=B\mu(s)\,,\]
which proves the first item.
A similar computation with $B^\l$ instead of $B$ also gives us that $B^\l\mu(s)=\mu(s+\frac{2\pi}{k})$, and since $B^{\l k}=I$, then $\mu$ is $2\pi$-periodic, proving the last item.
We prove the second item. 
Let $\lambda(s)$ be the function for which equation \eqref{eq OuterBilliardEqInvCurveInvForm} holds:
\[\lambda(s)(B+I)\dot{\mu}(s)=(B-I)\mu(s)\,.\]
Take determinants of both sides with the vector $\dot{\mu}(s)$:
\[\lambda(s)\Big[(B+I)\dot{\mu}(s),\dot{\mu}(s)\Big] = \Big[(B-I)\mu(s),\dot{\mu}(s)\Big]\,.\]
By properties of determinant, the determinant on the left hand side is $[B\dot{\mu}(s),\dot{\mu}(s)]$ which is not zero, so we can isolate $\lambda(s)$:
\[\lambda(s)=\frac{\Big[(B-I)\mu(s),\dot{\mu}(s)\Big]}{\Big[B\dot{\mu}(s),\dot{\mu}(s)\Big]}\,.\]
Now plug here $\mu(s)=\gamma(f(s))$, $\dot{\mu}(s)=f'(s)\dot{\gamma}(f(s))$:
\[\lambda(s)=\frac{f'(s)\Big[(B-I)\gamma(f(s)),\dot{\gamma}(f(s))\Big]}{f'(s)^2\Big[B\dot{\gamma}(f(s)),\dot{\gamma}(f(s))\Big]}=\frac{G(f(s))}{f'(s)}\,.\]
But since $f$ is a solution to equation \eqref{eq ODEOuter}, right hand side is constant, proving that $\lambda(s)$ is constant, so $\mu$ has all of the required properties.
  \end{proof}
  
  For convenience, we keep calling the curve with the parametrization constructed in Lemma \ref{lem goodParameterOuter} $\gamma$. 
  Since $B\gamma(t)=\gamma(t+\frac{2\pi r}{k})$, we can use equation \eqref{eq outerBilliardEqInvCurve}, where now $\lambda$ is constant:
  \[\gamma(t+\frac{2\pi r}{k})-\lambda\dot{\gamma}(t+\frac{2\pi r}{k})=\gamma(t)+\lambda\dot{\gamma}(t)\,,\]
  and rearrange it:
  \[\gamma(t+\frac{2\pi r}{k})-\gamma(t)=\lambda(\dot{\gamma}(t)+\dot{\gamma}(t+\frac{2\pi r}{k}))\,.\]
  Consider $\gamma(t)$ as a complex valued function, and write its Fourier expansion:
  \[\gamma(t)=\sum_{n\in\Z} c_n e^{int}\,.\]
  Plug it in the equality above:
  \[\sum_{n\in\Z} c_n(e^{i\frac{2\pi rn}{k}} - 1)e^{int} = \sum_{n\in\Z} i\lambda nc_n(e^{i\frac{2\pi r n}{k}} + 1)e^{int}\,,\]
  so we must have, for all $n\in\Z$:
  \[c_n(e^{i\frac{2\pi rn}{k}}-1)=i\lambda nc_n(e^{i\frac{2\pi rn}{k}}+1) \,.\]
  Observe that if $c_n\neq 0$, then we must have:
  \[(e^{i\frac{2\pi rn}{k}}-1)=i\lambda n(e^{i\frac{2\pi r n}{k}}+1)\,,\]
  which simplifies to the following:
  \begin{equation} \label{eq FourierConditionOuter}
	\tan\frac{\pi rn}{k}=\lambda n\,.
  \end{equation}

We make the following observation about the coefficients $c_0$,$c_1$,$c_{-1}$:
\begin{lemma}\label{lem VanishingOfFourier}
Let $\gamma:[0,2\pi]\to\C$ be a $C^2$-smooth, closed, strictly convex curve, and $B:\C\to\C$ be an $\R$-linear map of order $k\ge 3$ such that $B\gamma(t)=\gamma(t+\frac{2\pi r}{k})$, where $r\in\N$.
 Then the coefficient $c_0$ in the Fourier expansion of $\gamma$ vanishes, and at least one of the coefficients $c_1,c_{-1}$ does not vanish.
\end{lemma}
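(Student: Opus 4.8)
The plan is to first record that the conclusion---that the first harmonic $c_1e^{it}+c_{-1}e^{-it}$ of $\gamma$ does not vanish identically---is invariant under real-linear automorphisms of $\C=\R^2$: if $Pz=\alpha z+\beta\bar z$, then the $\pm1$ Fourier coefficients of $P\gamma$ are $\alpha c_{\pm1}+\beta\overline{c_{\mp1}}$, so they vanish simultaneously exactly when $c_1,c_{-1}$ do. Since $B$ has order $k\ge3$ it is conjugate to a genuine rotation, so after such a change of coordinates I may assume $Bz=\zeta z$ with $\zeta=e^{2\pi i m/k}$, $\gcd(m,k)=1$. Substituting $\gamma=\sum_n c_ne^{int}$ into $B\gamma(t)=\gamma(t+\frac{2\pi r}{k})$ and matching coefficients gives $\zeta c_n=e^{2\pi i rn/k}c_n$ for every $n$; hence $c_n\ne0$ forces $rn\equiv m\pmod k$, i.e.\ $n\equiv a:=r^{-1}m\pmod k$. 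As $\gcd(m,k)=1$ we have $a\not\equiv 0$, so in particular $c_0=0$ (equivalently, averaging the functional equation over a period gives $Bc_0=c_0$, and $1$ is not an eigenvalue of $B$).

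Next I would pin this arithmetic progression down to $a\equiv 1$. The coefficient condition is equivalent to the rotational symmetry $\gamma(t+\frac{2\pi}{k})=e^{2\pi i a/k}\gamma(t)$. Because $\gamma$ is strictly convex and its centroid $c_0=0$ is interior, $\gamma$ is star-shaped about the origin and its winding number about $0$ equals $+1$ (choosing the conjugating map to preserve orientation; the opposite orientation merely swaps the roles of $c_1$ and $c_{-1}$, which is exactly why the statement allows either). On the other hand, the symmetry shows that the increment of $\arg\gamma$ over each of the $k$ subarcs $[\frac{2\pi j}{k},\frac{2\pi(j+1)}{k}]$ is the same and is congruent to $\frac{2\pi a}{k}\pmod{2\pi}$, so the total winding number is congruent to $a$ modulo $k$. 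Comparing the two, $a\equiv 1\pmod k$, hence $a=1$. In particular $-1\not\equiv 1\pmod k$ for $k\ge3$, so $c_{-1}=0$, and it remains only to show $c_1\ne0$.

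Finally I would prove $c_1\ne 0$. Put $g(t)=\gamma(t)e^{-it}$; by the symmetry with $a=1$ this function is $\frac{2\pi}{k}$-periodic and nowhere zero, and $c_1=\frac{k}{2\pi}\int_0^{2\pi/k}g\,dt$. Write $g=\rho e^{i\eta}$ with $\rho=|\gamma|>0$ and $\eta=\arg\gamma-t$; star-shapedness gives $\frac{d}{dt}\arg\gamma>0$, so $\eta'>-1$, and $\eta$ is $\frac{2\pi}{k}$-periodic. \textbf{The key point}, and the step where one must be careful, is to bound the \emph{oscillation} of $\eta$ rather than its range: if $M=\eta(a)$ and $\mu=\eta(b)$ are its maximum and minimum, with representatives chosen so that $0<b-a<\frac{2\pi}{k}$, then $\mu=M+\int_a^b\eta'>M-(b-a)>M-\frac{2\pi}{k}$, whence $M-\mu<\frac{2\pi}{k}\le\frac{2\pi}{3}<\pi$. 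Thus all values $g(t)$ lie in an open half-plane $\{\mathrm{Re}(e^{-i\beta}z)>0\}$ with $\beta$ the midpoint of the arc, so $\mathrm{Re}\big(e^{-i\beta}\int_0^{2\pi/k}g\big)=\int_0^{2\pi/k}\rho\cos(\eta-\beta)\,dt>0$, giving $c_1\ne0$. I expect this oscillation estimate to be the only delicate point: the naive bound on the range of $\eta$ places $g$ in a half-plane only for $k\ge5$, whereas exploiting $\eta'>-1$ closes all cases $k\ge3$ uniformly, and it is here that convexity (not merely the functional equation) is genuinely used.
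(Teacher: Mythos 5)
Your proof is correct, but it takes a genuinely different route from the paper's. For $c_0=0$ the two arguments essentially coincide (average the functional equation to get $Bc_0=c_0$ and use that an order-$k$ map with $k\ge3$ has no real eigenvalue). For the first harmonics the paper argues by contradiction \emph{without using the symmetry at all}: if $c_0=c_1=c_{-1}=0$, then $\mathrm{Re}\,\gamma$ and $\mathrm{Im}\,\gamma$ are $L^2$-orthogonal to the Chebyshev system $\set{1,\cos t,\sin t}$, so by the Sturm--Hurwitz--Kellogg theorem each vanishes at least four times, which is impossible since a strictly convex curve meets each coordinate axis at most twice. You instead exploit the functional equation to the hilt: after conjugating $B$ to a rotation you confine the spectrum of $\gamma$ to a single residue class $n\equiv a\pmod k$, identify $a$ with the winding number about $c_0=0$, and then prove $\int_0^{2\pi/k}\gamma(t)e^{-it}\,dt\neq 0$ via the oscillation bound $M-\mu<\frac{2\pi}{k}<\pi$ obtained from $\frac{d}{dt}\arg\gamma>0$. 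Your route is longer but entirely elementary (no Sturm--Hurwitz input), and it proves strictly more: exactly one of $c_{\pm1}$ is nonzero, and all nonzero modes lie in one class mod $k$ --- facts that would actually streamline the subsequent Fourier computations in Sections 3 and 4. Two small points to tidy: you invert $r$ modulo $k$ without comment, so add the one-line remark that if $\gcd(r,k)>1$ the congruence $rn\equiv m\pmod k$ has no solution and every $c_n$ would vanish, forcing $\gcd(r,k)=1$ from the hypotheses; and you reuse the letter $a$ both for the residue class and for the maximizer of $\eta$, which should be disambiguated.
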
  
\begin{proof}
The coefficient $c_0$ is computed by 
\[c_0=\int\limits_0^{2\pi}\gamma(t)dt\,.\]
Apply $B$ to both sides, use linearity of the integral, and periodicity of $\gamma$:
\[Bc_0=B\int\limits_0^{2\pi}\gamma(t)dt=\int\limits_0^{2\pi}B\gamma(t)dt=\int\limits_0^{2\pi}\gamma(t+\frac{2\pi r}{k})dt=c_0\,,\]
which implies that $Bc_0=c_0$.
If $c_0\neq 0$, then $1$ is an eigenvalue of $B$, but since $B$ is a linear map of order $k\ge 3$, it has no real eigenvalues.
 Therefore, $c_0 = 0$.\\
Arguing by contradiction, assume that $c_1=c_{-1}=0$. 
This means that the real and imaginary part of $\gamma$ are $L^2$ orthogonal to the subspace spanned by $\set{1,\sin t,\cos t}$.
 According to the Sturm-Hurwitz-Kellogg Theorem (see \cite{2005Pdgo}), since this space is a Chebyshev system of dimension three, the real and imaginary part of $\gamma$ must vanish at least four times.
This is impossible: since $\gamma$ is a strictly convex curve, it can intersect each axis at most twice.
This contradiction completes the proof of the lemma.
\end{proof}
  
  It follows from Lemma \ref{lem VanishingOfFourier} that equation \eqref{eq FourierConditionOuter} must be satisfied for $n=1$ or $n=-1$ (or both). In both cases we get $\lambda=\tan\frac{\pi r}{k}$. 
  Moreover, if for some $|n|>1$, we would have $c_n\neq 0$, then equation \eqref{eq FourierConditionOuter} would give us 
  \[\tan\frac{\pi rn}{k}=n\tan\frac{\pi r}{k}\,,\]
where $k \ge 3$. 
 But according to \cite[Theorem 1]{cyr2011number}, the equation
  \[\tan(nx)=n\tan(x)\] 
  cannot have solutions among rational multiples of $\pi$.
  Therefore, for all $|n|>1$ we have $c_n=0$, so $\gamma(t)$ then must be an ellipse, as its only non zero Fourier coefficients are $c_1$,$c_{-1}$. 
  This proves Theorem \ref{theorem outer}.
  \section{Symplectic Billiards - proof of Theorem \ref{theorem symplectic}} \label{section proofSymplectic}
  In Subsection \ref{subsect bg} we recalled the geometric definition and twist map structure of the Symplectic billiard map.
  Given any parameter $t$ on $\gamma$, the Symplectic billiard map $T$ can be considered as a map of a cylinder, with a vertical parameter $s$, which satisfies:
  \[T(t,s)=(t_1,s_1)\,,\]
  if and only if $s=-[\dot{\gamma}(t),\gamma(t_1)]$ and $s_1=[\gamma(t),\dot{\gamma}(t_1)]$.
  We assume that for the Symplectic billiard map there exists a rotational invariant curve, $\alpha$, of $k$-periodic orbits, of rotation number $\frac{r}{k}$, where $0<r<\frac{k}{2}$ is coprime with $k$.
  Use Lemma \ref{lem actionOfAOnGamma} to get a $2\pi$-periodic parametrization of (a homothetic copy of) $\gamma$ for which $[\gamma(t),\dot{\gamma}(t)]=1$, and an integer $m$ coprime with $k$ such that for $B=A^m$ we have $B\gamma(t)=\gamma(t+\frac{2\pi r}{k})$.
  Using this parametrization, we describe the bouncing points of the $k$-periodic Symplectic billiard orbit starting at $\gamma(t)$.
  \begin{lemma}\label{lem bouncingPointSymp}
   Let $\gamma$ be a  $C^2$-smooth, planar, strictly convex curve. 
 Assume that $\gamma$ is invariant under a linear map $A$ of order $k$, and that the Symplectic billiard map on $\gamma$ has a rotational invariant curve, $\alpha$, of $k$-periodic orbits.
 Then, the bouncing points of the $k$-periodic Symplectic billiard orbit starting at $\gamma(t)$ are $\gamma(t)$,$B\gamma(t)$,...,$B^{k-1}\gamma(t)$, where $B$ is as in Lemma \ref{lem actionOfAOnGamma}.
  \end{lemma}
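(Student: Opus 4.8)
The plan is to follow verbatim the strategy of the Outer billiard case (Lemma \ref{lem tangencyPointsOuter}), adapting only the one computation that is sensitive to the billiard law. First I would fix the rotation number $\frac{r}{k}$ of $\alpha$ and parametrize $\gamma$ as in Lemma \ref{lem actionOfAOnGamma}, so that $\gamma$ is $2\pi$-periodic, $[\gamma(t),\dot{\gamma}(t)]=1$, and $B\gamma(t)=\gamma(t+\frac{2\pi r}{k})$ with $B=A^m$ of order $k$. Differentiating the last identity gives $B\dot{\gamma}(t)=\dot{\gamma}(t+\frac{2\pi r}{k})$. I would then introduce the candidate symmetry of the phase cylinder $R(t,s)=(t+\frac{2\pi r}{k},s)$, the map induced on phase space by the action of $B$, and aim to show $R\circ T=T\circ R$.

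The key step, and the only place where the argument genuinely uses the symplectic (rather than the Outer) billiard law, is verifying this commutation. Suppose $T(t,s)=(t_1,s_1)$, so that $s=-[\dot{\gamma}(t),\gamma(t_1)]$ and $s_1=[\gamma(t),\dot{\gamma}(t_1)]$. Applying $B$ inside both determinants and using $\det B=1$ together with $B\gamma(t)=\gamma(t+\frac{2\pi r}{k})$ and $B\dot{\gamma}(t)=\dot{\gamma}(t+\frac{2\pi r}{k})$, I obtain
\[-[\dot{\gamma}(t+\tfrac{2\pi r}{k}),\gamma(t_1+\tfrac{2\pi r}{k})]=-[\dot{\gamma}(t),\gamma(t_1)]=s\]
and likewise $[\gamma(t+\frac{2\pi r}{k}),\dot{\gamma}(t_1+\frac{2\pi r}{k})]=s_1$. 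By the defining relations of $T$, together with the uniqueness of $t_1$ coming from the twist condition, this says exactly $T(t+\frac{2\pi r}{k},s)=(t_1+\frac{2\pi r}{k},s_1)$, i.e. $T\circ R=R\circ T$.

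With commutation established, the remainder is identical to the Outer case. By Birkhoff's theorem $\alpha$ is a graph $s=s(t)$; since every point of $\alpha$ is periodic, Aubry--Mather theory forces $\alpha$ to be invariant under $R$ (as $R(\alpha)$ has the same rotation number $\frac{r}{k}$), whence $s(t)$ is $\frac{2\pi r}{k}$-periodic. Writing $S$ for the map describing the restricted dynamics and $\tilde{S}$ for its increasing lift, evaluating $T\circ R=R\circ T$ on the points $(t,s(t))$ yields $\tilde{S}(t+\frac{2\pi r}{k})=\tilde{S}(t)+\frac{2\pi r}{k}$, while the rotation number gives $\tilde{S}^k(t)=t+2\pi r$. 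Lemma \ref{lem impliesIsAShift} then forces $\tilde{S}(t)=t+\frac{2\pi r}{k}$, so the bouncing points are $\gamma(t),\gamma(t+\frac{2\pi r}{k}),\dots,\gamma(t+\frac{2\pi r(k-1)}{k})$, which by Lemma \ref{lem actionOfAOnGamma} equal $\gamma(t),B\gamma(t),\dots,B^{k-1}\gamma(t)$.

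I expect the main obstacle to be pinning down the correct induced map $R$ and checking its commutation with $T$. In the Outer case the vertical coordinate $\lambda$ is a length that $B$ preserves, whereas here the vertical coordinate $s=-[\dot{\gamma}(t),\gamma(t_1)]$ is an area, and one must verify that the unimodularity $\det B=1$ is precisely what keeps $s$ fixed under $R$. Once this is in place, no ideas beyond those of Lemma \ref{lem tangencyPointsOuter} are required.
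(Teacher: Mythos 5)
Your proposal is correct and follows essentially the same route as the paper's proof: the same induced map $R(t,s)=(t+\frac{2\pi r}{k},s)$, the same commutation check via $\det B=1$ applied inside the determinants defining $s$ and $s_1$, and the same combination of Birkhoff's theorem, Aubry--Mather rigidity, and Lemma \ref{lem impliesIsAShift} to conclude $\tilde{S}(t)=t+\frac{2\pi r}{k}$. No substantive differences to note.
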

  \begin{proof}
  Parametrize $\gamma$ as in Lemma \ref{lem actionOfAOnGamma}.
  Consider the map $R(t,s)=(t+\frac{2\pi r}{k},s)$ induced by $B$ on the phase space.
  We show that $T\circ R = R\circ T$.
  Let $(t_1,s_1)=T(t,s)$, meaning $s=-[\dot{\gamma}(t),\gamma(t_1)]$ and $s_1=[\gamma(t),\dot{\gamma}(t_1)]$, as mentioned above. 
  We compute:
  \[-[\dot{\gamma}(t+\frac{2\pi r}{k}),\gamma(t_1+\frac{2\pi r}{k})]=-[B\dot{\gamma}(t),B\gamma(t_1)]=-\det B [\dot{\gamma}(t),\gamma(t_1)]=s\,,\]
since $\det B=1$. Similarly:
  \[[\gamma(t+\frac{2\pi r}{k}),\dot{\gamma}(t_1+\frac{2\pi r}{k})]=s_1\,.\]
  This means that $T(t+\frac{2\pi r}{k},s)=(t_1+\frac{2\pi r}{k},s_1)$, or that $T\circ R(t,s) = R\circ T(t,s)$.
By a theorem of Birkhoff, the rotational invariant curve $\alpha$ is a graph of a function  $s=s(t)$.
  Just like in the proof of Theorem \ref{theorem euclideanK}, the fact that all points of $\alpha$ are periodic, implies that $\alpha$ is preserved by $R$, which then means that $s(t)$ is a $\frac{2\pi r}{k}$-periodic function.
  Let $S:\R\slash(2\pi\Z)\to\R\slash(2\pi\Z)$ be the function that describes the restriction of the dynamics to $\alpha$, so that:
  \[T(t,s(t))=(S(t),s(S(t)))\,,\]
  and let $\tilde{S}$ be its lift to $\R$. 
  This function is increasing, and since $\alpha$ is an invariant curve of rotation number $\frac{r}{k}$, it satisfies $\tilde{S}^k(t)=t+2\pi r$ for all~$t$. 
  Now evaluate both sides of $R\circ T = T\circ R$ on the points $(t,s(t))$:
\begin{gather*}
T\circ R(t,s(t))=T(t+\frac{2\pi r}{k},s(t))=T(t+\frac{2\pi r}{k},s(t+\frac{2\pi r}{k}))=\\
=(S(t+\frac{2\pi r}{k}),s(S(t+\frac{2\pi r}{k})))\,,
\end{gather*}
\[R\circ T(t,s(t))=R(S(t),s(S(t)))=(S(t)+\frac{2\pi r}{k},s(S(t)))\,.\]
From this computation we see that $\tilde{S}(t+\frac{2\pi r}{k})=\tilde{S}(t)+\frac{2\pi r}{k}$.
By Lemma \ref{lem impliesIsAShift}, we see that $\tilde{S}(t)=t+\frac{2\pi r}{k}$.
Thus, the bouncing points of the $k$-periodic Symplectic billiard orbit starting at $\gamma(t)$ are $\gamma(t)$,$\gamma(t+\frac{2\pi r}{k})$,...,$\gamma(t+\frac{2\pi r (k-1)}{k})$, which are just $\gamma(t)$,$B\gamma(t)$,...,$B^{k-1}\gamma(t)$.
 This proves the lemma.
  \end{proof}
Lemma \ref{lem bouncingPointSymp} implies that $\gamma(t)$, $B\gamma(t)$, $B^2\gamma(t)$ are three consecutive points for the Symplectic billiard map for all $t$. 
Therefore, using the geometric definition, there exists $\lambda(t)>0$ such that
  \begin{equation}\label{eq sympBilliardEqInvForm}
  B^2\gamma(t)-\gamma(t)=\lambda(t)B\dot{\gamma}(t)\,.
  \end{equation}
Equation \eqref{eq sympBilliardEqInvForm} was derived for the parametrization of Lemma \ref{lem actionOfAOnGamma}, but in fact, it holds for any parametrization, changing the function $\lambda(t)$ if necessary.
Moreover, if we use a parametrization for which $B\gamma(t)=\gamma(t+\frac{2\pi r}{k})$, then this equation is equivalent to 
  \begin{equation}\label{eq sympBilliardEq}
  \gamma(t+\frac{4\pi r}{k})-\gamma(t)=\lambda(t)\dot{\gamma}(t+\frac{2\pi r}{k})\,.
  \end{equation}
  As in the proof of Theorem \ref{theorem outer}, we will reparamatrize $\gamma$, so the function $\lambda(t)$ in equation \eqref{eq sympBilliardEqInvForm} will be constant, and in addition we will still have the property $B\gamma(t)=\gamma(t+\frac{2\pi r}{k})$.
  \begin{lemma}\label{lem goodParametrizationSymp}
Let $\gamma$,$A$,$m$,$B$, be as in Lemma \ref{lem actionOfAOnGamma}.
If the Symplectic billiard map on $\gamma$ has an invariant curve, $\alpha$, of $k$-periodic orbits, then there exists a $C^2$-reparametrization of $\gamma$, $\mu(s)=\gamma(f(s))$ such that:
  \begin{enumerate}
  \item $\mu(s+\frac{2\pi r}{k})=B\mu(s)$ for all $s$.
 \item The function $\lambda(s)$ determined by this parametrization in equation \eqref{eq sympBilliardEqInvForm} is constant.
 \item $\mu$ is $2\pi$-periodic.
  \end{enumerate}
  \end{lemma}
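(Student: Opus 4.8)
The statement to prove is Lemma \ref{lem goodParametrizationSymp}, the Symplectic-billiards analogue of Lemma \ref{lem goodParameterOuter}. My plan is to mirror the proof of that earlier lemma essentially verbatim, since the only thing that changes is the precise determinant expression defining the function $G$. Starting from the parametrization supplied by Lemma \ref{lem actionOfAOnGamma} (so that $[\gamma(t),\dot{\gamma}(t)]=1$ and $B\gamma(t)=\gamma(t+\frac{2\pi r}{k})$), I first isolate the function $\lambda(t)$ appearing in equation \eqref{eq sympBilliardEqInvForm}. Taking the determinant of both sides of $B^2\gamma(t)-\gamma(t)=\lambda(t)B\dot{\gamma}(t)$ against the vector $B\dot{\gamma}(t)$ kills the right-hand side's scalar cleanly and lets me solve
\[
\lambda(t)=\frac{\bigl[B^2\gamma(t)-\gamma(t),\,B\dot{\gamma}(t)\bigr]}{\bigl[B\dot{\gamma}(t),\,B\dot{\gamma}(t)\bigr]}\,,
\]
but the denominator there vanishes, so instead I should take determinants against a different fixed vector, say $\dot{\gamma}(t)$, to get a nonvanishing denominator. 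This defines the candidate function
\[
G(t)=\frac{\bigl[B^2\gamma(t)-\gamma(t),\,\dot{\gamma}(t)\bigr]}{\bigl[B\dot{\gamma}(t),\,\dot{\gamma}(t)\bigr]}\,.
\]

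Next I would verify the three properties of $G$ needed to invoke Lemma \ref{lem ODElemma}: that $G$ is $C^1$-smooth, that $G>0$, and that $G$ is $\frac{2\pi}{k}$-periodic. Smoothness is immediate from $\gamma\in C^2$. Positivity follows because $B^2\gamma(t)-\gamma(t)$ is (up to the twist orientation) the chord of the orbit and $B\dot{\gamma}(t)$ is the tangent direction at the middle vertex, which by the symplectic reflection law is parallel to that chord, so the vector pairs in numerator and denominator are consistently oriented — exactly the argument used for the Outer case. For periodicity, I use that $r$ is coprime with $k$, so some power $B^{\ell}$ satisfies $B^{\ell}\gamma(t)=\gamma(t+\frac{2\pi}{k})$; applying this substitution multiplies both numerator and denominator by $\det B^{\ell}=1$, leaving $G$ unchanged, hence $G(t+\frac{2\pi}{k})=G(t)$.

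With $G$ positive, smooth, and $\frac{2\pi}{k}$-periodic, Lemma \ref{lem ODElemma} (applied with period $P=\frac{2\pi}{k}$) furnishes $\lambda>0$ and an increasing $C^2$ function $f$ with $\lambda f'(s)=G(f(s))$ and $f(s+\frac{2\pi}{k})=f(s)+\frac{2\pi}{k}$, whence also $f(s+\frac{2\pi r}{k})=f(s)+\frac{2\pi r}{k}$. Setting $\mu(s)=\gamma(f(s))$, property (1) follows from $\mu(s+\frac{2\pi r}{k})=\gamma(f(s)+\frac{2\pi r}{k})=B\gamma(f(s))=B\mu(s)$, and property (3) follows by the same computation with $B^{\ell}$ together with $B^{\ell k}=I$. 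For property (2), substituting $\mu(s)=\gamma(f(s))$ and $\dot{\mu}(s)=f'(s)\dot{\gamma}(f(s))$ into the determinant formula for $\lambda(s)$ produces one factor of $f'(s)$ in the numerator and $f'(s)$ in the denominator (since both $\gamma$ and its derivative pick up the chain-rule factor), yielding $\lambda(s)=G(f(s))/f'(s)$, which is constant by the ODE. The only genuine care point — and the step I expect to be the mild obstacle — is choosing the correct auxiliary vector for the determinant normalization so that the denominator is manifestly nonzero and the chain-rule factors cancel to give exactly $G(f(s))/f'(s)$; once the degrees in $f'$ are tracked correctly, everything else is a transcription of the Outer-billiards argument.
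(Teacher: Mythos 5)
Your proof is correct and follows the paper's strategy: identify $\lambda(t)$ with a positive, $C^1$-smooth, $\frac{2\pi}{k}$-periodic function $G$ and then reparametrize via Lemma \ref{lem ODElemma}. The only (cosmetic) difference is that the paper extracts $G(t)=-2[B\gamma(t),\gamma(t)]$ by taking determinants against $B\gamma(t)$ and using $[\gamma(t),\dot{\gamma}(t)]=1$ together with Cayley--Hamilton, whereas you pair against $\dot{\gamma}(t)$ and keep $G$ as a quotient (nonvanishing since $B$ has no real eigenvalues); the two expressions agree, and your bookkeeping of the chain-rule factors (one $f'$ in the numerator, $f'^2$ in the denominator) yields the correct conclusion $\lambda(s)=G(f(s))/f'(s)$.
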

  \begin{proof}
  Parametrize $\gamma$ as in Lemma \ref{lem actionOfAOnGamma}.
   Consider the following function:
  \[G(t)=-2[B\gamma(t),\gamma(t)]\,.\]
  By definition of Symplectic billiards, the vectors inside the determinant are oriented negatively, so $G(t)>0$. 
The function $G$ is clearly $C^1$-smooth (in fact, it is even $C^2$-smooth). We show that it is $\frac{2\pi}{k}$-periodic.
Since $r$ is coprime to $k$, there exists $\l\in\N$ for which $B^\l\gamma(t)=\gamma(t+\frac{2\pi}{k})$.
As a result, when computing $G(t+\frac{2\pi}{k})$, we will have an extra factor of $\det B^\l = 1$.
  We can now use Lemma \ref{lem ODElemma} to find a constant $\lambda>0$ and a $C^2$-smooth increasing $f:\R\to\R$ for which
  \begin{equation}\label{eq ODEforSymp}
\lambda f'(s)=G(f(s))\,,
\end{equation}  
and in addition, $f(s+\frac{2\pi}{k})=f(s)+\frac{2\pi}{k}$, and thus also satisfies $f(s+\frac{2\pi r}{k})=f(s)+\frac{2\pi r}{k}$.
  Now let $\mu(s)=\gamma(f(s))$, and we show that this parametrization satisfies the required properties.
  First:
  \[\mu(s+\frac{2\pi r}{k})=\gamma(f(s+\frac{2\pi r}{k}))=\gamma(f(s)+\frac{2\pi r}{k})=B\gamma(f(s))=B\mu(s)\,,\]
  so the first item holds.
Similar computation with $B^\l$ instead of $B$ gives us $B^\l\mu(s)=\mu(s+\frac{2\pi}{k})$, and since $B^{\l k}=I$, we get that $\mu$ is $2\pi$ periodic, proving the last item.
  Now we prove the second item.
Let $\lambda(s)$ be the function which satisfies equation \eqref{eq sympBilliardEqInvForm} for this parametrization:
  \[B^2\mu(s)-\mu(s)=\lambda(s)B\dot{\mu}(s)\,.\]
  Rewrite it in terms of $\gamma$:
  \[B^2\gamma(f(s))-\gamma(f(s))=\lambda(s)f'(s)B\dot{\gamma}(f(s))\,.\]
  Recall that $[\gamma(t),\dot{\gamma}(t)]=1$, and take determinants with the vector $B\gamma(f(s))$:
  \begin{equation}\label{eq sympComputingLambda}
  \Big[B\gamma(f(s)),B^2\gamma(f(s))-\gamma(f(s))\Big]=\lambda(s)f'(s)\,.
  \end{equation}
  The linear map $B$ is of order $k\ge 3$, so it is conjugate to a rotation by angle $\frac{2\pi q}{k}$, for some $q\in\N$ coprime with $k$.
  The characteristic polynomial of $B$ is therefore $x^2-2\cos\frac{2\pi q}{k} x +1$, so by Cayley-Hamilton theorem, we get:
  \[B^2=2\cos\frac{2\pi q}{k}B-I\,.\]
  In equation \eqref{eq sympComputingLambda}, we replace $B^2$ with this expression:
  \[\lambda(s)f'(s)=\Big[B\gamma(f(s)),2\cos\frac{2\pi q}{k}B\gamma(f(s))-2\gamma(f(s))\Big]\,.\]
  By properties of determinants, we can simplify the right hand side:
  \[\lambda(s)=\frac{-2\Big[B\gamma(f(s)),\gamma(f(s))\Big]}{f'(s)}=\frac{G(f(s))}{f'(s)}\,,\]
  but by definition of $f(s)$ in equation \eqref{eq ODEforSymp}, we know that right hand side is a positive constant, proving that $\lambda(s)$ is constant.
  As a result, we conclude that $\mu(s)$ is the required reparametrization of $\gamma(t)$.
  \end{proof}
For convenience, we keep calling the curve with the parametrization constructed in Lemma \ref{lem goodParametrizationSymp} $\gamma$. 
Since in this parametrization we also have $B\gamma(t)=\gamma(t+\frac{2\pi r}{k})$, equation \eqref{eq sympBilliardEq} holds. 
View $\gamma$ as a complex valued function, and write its Fourier expansion:
\[\gamma(t)=\sum_{n\in\Z} c_ne^{int}\,.\]
Use the Fourier expansion in equation \eqref{eq sympBilliardEq}:
\[\sum_{n\in\Z}c_n(e^{i\frac{4\pi rn}{k}}-1)e^{int}=\sum_{n\in\Z}\lambda ine^{i\frac{2\pi rn}{k}}c_n e^{int}\,.\]
This means that for all $n\in\Z$ we have $c_n(e^{i\frac{4\pi rn}{k}}-1)=\lambda i ne^{i\frac{2\pi rn}{k}}c_n$.
 Therefore, if $c_n\neq 0$ then $e^{i\frac{4\pi rn}{k}}-1 = \lambda ine^{i\frac{2\pi rn}{k}}$, which is equivalent to $\lambda n=2\sin\frac{2\pi rn}{k}$.
By Lemma \ref{lem VanishingOfFourier}, we know that $c_0=0$ and one of the coefficients $c_1$ or $c_{-1}$ does not vanish.
In both cases we compute the constant $\lambda$: $\lambda=2\sin\frac{2\pi r}{k}$.
Therefore, if for some $n\in\Z$ $c_n\neq 0$, then $$\sin\frac{2\pi rn}{k}=n\sin\frac{2\pi r}{k}\,.$$
As the next elementary Lemma \ref{lem trigImpossible} shows, this is impossible, since for $k\ge 3$ (which is the assumption in Theorem \ref{theorem symplectic}), and $r$ coprime to $k$, we have $\sin\frac{2\pi r}{k}\neq0$.
\begin{lemma}\label{lem trigImpossible}
For all $n\in\Z$, $|n|\ge 2$, and $x\in \R$, we have \[|\sin(nx)|\le |n||\sin(x)|\,.\] 
Moreover, equality holds if and only if $\sin x=0$.
\end{lemma}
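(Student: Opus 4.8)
The plan is to reduce immediately to the case $n \geq 2$ using that $\sin$ is odd, so that $|\sin(nx)| = |\sin(|n|x)|$ and the statements for $n$ and $-n$ coincide. I would then establish the inequality $|\sin(nx)| \leq n|\sin x|$ for every positive integer $n$ by induction on $n$, the base case $n=1$ being the trivial equality. For the inductive step I would expand $\sin((n+1)x) = \sin(nx)\cos x + \cos(nx)\sin x$ by the angle-addition formula, apply the triangle inequality, and then bound $|\cos x| \leq 1$ and $|\cos(nx)| \leq 1$ to obtain $|\sin((n+1)x)| \leq |\sin(nx)| + |\sin x|$; combining with the induction hypothesis $|\sin(nx)| \leq n|\sin x|$ yields $|\sin((n+1)x)| \leq (n+1)|\sin x|$.

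For the equality clause one direction is immediate: if $\sin x = 0$ then $x$ is an integer multiple of $\pi$, hence $\sin(nx) = 0$ as well and equality holds trivially. The substantive direction is to show that for $n \geq 2$ the equality $|\sin(nx)| = n|\sin x|$ forces $\sin x = 0$. I would argue by contradiction, assuming $\sin x \neq 0$, and inspect the chain $|\sin(nx)| \leq |\sin((n-1)x)| + |\sin x| \leq (n-1)|\sin x| + |\sin x| = n|\sin x|$, where the second bound uses the already-established inequality at $n-1$. If the two ends agree, both inequalities must be tight.

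Tightness of the first inequality means, after writing out the angle-addition step, that $|\sin((n-1)x)|(1-|\cos x|) + |\sin x|(1 - |\cos((n-1)x)|) = 0$; since both summands are nonnegative they vanish separately, and because $\sin x \neq 0$ the second forces $|\cos((n-1)x)| = 1$, that is $\sin((n-1)x) = 0$. Tightness of the second inequality forces $|\sin((n-1)x)| = (n-1)|\sin x|$, which together with $\sin((n-1)x)=0$ and $n-1 \geq 1$ gives $\sin x = 0$, a contradiction.

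I expect the only delicate point to be this equality analysis: one must carefully extract that the triangle/cosine estimate and the inductive estimate are separately tight, and use $n \geq 2$ (so that $n-1 \geq 1$) precisely at the step where the contradiction is closed. The inequality itself is entirely routine.
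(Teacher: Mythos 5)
Your proof is correct and follows essentially the same route as the paper's: reduce to $n\ge 2$ by oddness of sine, prove the inequality by induction via the angle-addition formula and the triangle inequality, and handle the equality case by showing that tightness of each intermediate bound forces $\sin x=0$. The only cosmetic differences are that you start the induction at $n=1$ rather than $n=2$ and phrase the equality analysis as the vanishing of a sum of nonnegative terms instead of a case split on whether $\sin((n-1)x)=0$; both are equivalent and equally valid.
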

\begin{proof}
Since sine is an odd function, it is enough to prove for $n\ge 2$. The proof is by induction on $n$. 
For $n=2$: 
\[|\sin(2x)|=2|\sin(x)||\cos(x)|\le 2|\sin(x)|\,.\]
Moreover, we have equality if and only if $\sin(x)=0$, or if $|\cos(x)|=1$, in which case again we will have $\sin(x)=0$.
Assume the claim holds for $n$, and we show it for $n+1$:
\begin{gather*}
|\sin((n+1)x)|=|\sin(nx)\cos(x)+\cos(nx)\sin(x)|\le \\
\le|\sin(nx)||\cos(x)|+|\cos(nx)||\sin(x)|\le \\
\le|\sin(nx)|+|\sin(x)|\le n|\sin(x)|+|\sin(x)|=(n+1)|\sin(x)|\,.
\end{gather*}
Assume that $\sin(x)\neq 0$, and we show that the inequality is strict. In this case $|\cos(x)|<1$. If $\sin(nx) = 0$, then $|\cos(nx)|=1$ and:
\[|\sin((n+1)x)|=|\sin(x)||\cos(nx)|<(n+1)|\sin(x)|\,,\]
as required. If $\sin(nx)\neq 0$ then $|\cos(nx)| < 1$, so both cosines are strictly less than one, and both sines are non-zero, so the inequality from the second to the third line will be strict.
This completes the proof.
\end{proof}
As a result, we see that all Fourier coefficients of $\gamma$, except for $c_1,c_{-1}$ vanish, so $\gamma$ is necessarily an ellipse. 
This completes the proof of Theorem \ref{theorem symplectic}.

\section{Minkowski Billiards - proof of Theorem \ref{theorem Minkowski}}\label{section proofMinkowski}
In Subsection \ref{subsect bg} we recalled the definition of Minkowski billiards as a twist map. 
Given a convex body $K$, the boundary of which is $\gamma$, and a (maybe asymmetric, and $C^2$-smooth except for the origin) norm $N$, the Minkowski billiard system is the twist map for which the function $$L(t,t_1)=N(\gamma(t_1)-\gamma(t))$$ is generating.
Denoting this twist map of the cylinder by $T$, we see that it is described by $T(t,s)=(t_1,s_1)$, where \[s=dN_{\gamma(t_1)-\gamma(t)} (-\dot{\gamma}(t)),s_1=dN_{\gamma(t_1)-\gamma(t)}(\dot{\gamma}(t_1))\,.\] 
Here $dN_x$ is the differential of $N$ at the point $x$, and $\gamma$ is parametrized such that $N(\dot{\gamma}(t))=1$ for all $t$. 
In our case, we choose $N$ to be the norm induced by $K$, namely $N(x)=g_K(x)=\inf\set{r>0\mid x\in rK}$.
The norm $N$ is then $C^2$-smooth (except for the origin) since we assumed that $\gamma=\partial K$ is $C^2$-smooth.

We assume that $K$ is invariant under a linear map $A$ of order $k$. 
Assume that the Minkowski billiard map in $K$ has a rotational invariant curve, $\alpha$, of $k$-periodic orbits with rotation number $\frac{r}{k}$, where $0<r<\frac{k}{2}$ is coprime with $k$.
Let $L$ be the period of~$\gamma$. 
We use Lemma \ref{lem actionOfAOnGamma}, now using the identity $g_K(\dot{\gamma}(t))=1$ instead of the identity $[\gamma(t),\dot{\gamma}(t)]=1$ used there.
As a result, we find an integer $m$ coprime with $k$ such that for $B=A^m$, we have $B\gamma(t)=\gamma(t+\frac{Lr}{k})$.
Next, we identify the points of the $k$-periodic orbit that starts at $\gamma(t)$.
\begin{lemma}\label{lem bouncingPointsMink}
   Let $\gamma$ be a $C^2$-smooth, planar, strictly convex curve. 
 Assume that $\gamma$ is invariant under a linear map $A$ of order $k$, and that the Minkowski billiard map on $\gamma$ admits a rotational invariant curve, $\alpha$ of $k$-periodic orbits.
 Then, the bouncing points of the $k$-periodic Minkowski billiard orbit starting at $\gamma(t)$ are $\gamma(t)$,$B\gamma(t)$,...,$B^{k-1}\gamma(t)$, where $B=A^m$ for some $m\in\N$, coprime to $k$.
\end{lemma}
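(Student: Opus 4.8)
The plan is to follow verbatim the template already used for the Birkhoff, Outer, and Symplectic cases (compare the arguments establishing Lemma \ref{lem tangencyPointsOuter} and Lemma \ref{lem bouncingPointSymp}), with the only substantial change being that the role played there by the determinant/area structure is now played by the invariance of the induced norm $N=g_K$. First I would fix the parametrization coming from Lemma \ref{lem actionOfAOnGamma} (applied, as noted, with the identity $g_K(\dot\gamma(t))=1$ in place of $[\gamma(t),\dot\gamma(t)]=1$), so that $B=A^m$ with $m$ coprime to $k$ satisfies $B\gamma(t)=\gamma(t+\frac{Lr}{k})$, and hence, since $B$ is linear, also $B\dot\gamma(t)=\dot\gamma(t+\frac{Lr}{k})$; note this is consistent with the unit-speed normalization because $g_K(B\dot\gamma(t))=g_K(\dot\gamma(t))=1$.

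The one genuinely new ingredient is the commutation relation. Since $B=A^m$ is a linear map preserving $K$, it preserves the gauge function: $N(Bx)=g_K(Bx)=g_K(x)=N(x)$ for all $x$, because $B^{-1}K=K$. Differentiating this identity in an arbitrary direction $v$ gives the transformation rule $dN_{Bx}(Bv)=dN_x(v)$. Consider the shift $R(t,s)=(t+\frac{Lr}{k},s)$ on the phase cylinder induced by $B$. Using $B\gamma(t)=\gamma(t+\frac{Lr}{k})$, $B\dot\gamma(t)=\dot\gamma(t+\frac{Lr}{k})$, the identity $\gamma(t_1+\frac{Lr}{k})-\gamma(t+\frac{Lr}{k})=B(\gamma(t_1)-\gamma(t))$, and the transformation rule for $dN$, I would check directly from the formulas $s=dN_{\gamma(t_1)-\gamma(t)}(-\dot\gamma(t))$ and $s_1=dN_{\gamma(t_1)-\gamma(t)}(\dot\gamma(t_1))$ that $T(t,s)=(t_1,s_1)$ forces $T(t+\frac{Lr}{k},s)=(t_1+\frac{Lr}{k},s_1)$, i.e. $R\circ T=T\circ R$. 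Equivalently, one observes that the generating function $L(t,t_1)=N(\gamma(t_1)-\gamma(t))$ is invariant under the simultaneous shift of both arguments by $\frac{Lr}{k}$, precisely because $N\circ B=N$.

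With the commutation in hand, the remainder of the proof is identical to the previous sections. By Birkhoff's theorem the invariant curve $\alpha$ is the graph of a Lipschitz function $s=s(t)$; since its rotation number $\frac{r}{k}$ is rational and every point of $\alpha$ is periodic, Aubry-Mather theory forces $R(\alpha)=\alpha$, so $s(t)$ is $\frac{Lr}{k}$-periodic. Writing $\tilde S$ for the lift of the circle map describing the restricted dynamics and evaluating $R\circ T=T\circ R$ on the points $(t,s(t))$ yields $\tilde S(t+\frac{Lr}{k})=\tilde S(t)+\frac{Lr}{k}$, while the rotation number gives $\tilde S^k(t)=t+Lr$. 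Applying Lemma \ref{lem impliesIsAShift} (with $Lr$ in place of $L$) gives $\tilde S(t)=t+\frac{Lr}{k}$. Consequently the bounce points of the orbit through $\gamma(t)$ are $\gamma(t),\gamma(t+\frac{Lr}{k}),\dots,\gamma(t+\frac{Lr(k-1)}{k})$, which are exactly $\gamma(t),B\gamma(t),\dots,B^{k-1}\gamma(t)$, proving the lemma.

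The step I expect to require the most care is the commutation $R\circ T=T\circ R$: one must confirm that the linear symmetry $B$ of $K$ is an isometry of the (possibly asymmetric) induced norm $N=g_K$ and that its differential satisfies $dN_{Bx}\circ B=dN_x$, so that the asymmetric Minkowski reflection law, phrased through the differential $dN$ rather than an Euclidean angle condition, is genuinely preserved by the shift. Once this is verified, every downstream step is a direct transcription of the arguments already carried out for the Outer and Symplectic billiard maps.
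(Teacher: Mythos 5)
Your proposal is correct and follows essentially the same route as the paper: establish $d(g_K)_{Bx}\circ B=d(g_K)_x$ from $g_K\circ B=g_K$ to get the commutation $R\circ T=T\circ R$, then invoke Birkhoff's theorem, Aubry--Mather uniqueness of the invariant curve of periodic orbits, and Lemma \ref{lem impliesIsAShift} to conclude $\tilde S(t)=t+\frac{Lr}{k}$. The step you flagged as delicate is exactly the one the paper also singles out, and your treatment of it matches theirs.
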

\begin{proof}
The linear map $B=A^m$, considered above, induces the map $R(t,s)=(t+\frac{Lr}{k},s)$ on the phase cylinder.
We show that this map commutes with $T$.
If $T(t,s)=(t_1,s_1)$, then we need to show that $T(t+\frac{Lr}{k},s)=(t_1+\frac{Lr}{k},s_1)$. 
Indeed: 
\begin{gather*}
d(g_K)_{\gamma(t_1+\frac{Lr}{k})-\gamma(t+\frac{Lr}{k})}(-\dot{\gamma}(t+\frac{Lr}{k}))=d(g_K)_{B\gamma(t_1)-B\gamma(t)}(-B\dot{\gamma}(t))=\\
=d(g_K)_{\gamma(t_1)-\gamma(t)}(-\dot{\gamma}(t))=s\,.
\end{gather*}
The last equality follows from the fact that $g_K(Bx)=g_K(x)$ for all $x$, which implies that $d(g_K)_{Bx}\circ B = d(g_K)_x$.
Similar computation gives that
\[d(g_K)_{\gamma(t_1+\frac{Lr}{k})-\gamma(t+\frac{Lr}{k})}(\dot{\gamma}(t_1+\frac{Lr}{k}))=d(g_K)_{\gamma(t_1)-\gamma(t)}(\dot{\gamma}(t_1))=s_1\,.\]
This proves that $T$ and $R$ commute. 

By Birkhoff's theorem, the rotational invariant curve $\alpha$ is the graph of a function $s=s(t)$, and there exists a function $S:\R\slash(L\Z)\to\R\slash(L\Z)$ which describes the dynamics on $\alpha$: $T(t,s(t))=(S(t),s(S(t)))$.
Since the rotation number of $\alpha$ is $\frac{r}{k}$, this circle map lifts to an increasing function $\tilde{S}:\R\to\R$ which satisfies $\tilde{S}^k(t)=t+Lr$.
As in the proof of Theorem \ref{theorem euclideanK} in Section \ref{section birkhoff}, the fact that every point of $\alpha$ is periodic implies that $s$ is $\frac{Lr}{k}$-periodic function.
Evaluate both sides of $T\circ R= R\circ T$ on the points $(t,s(t))$:
\begin{gather*}T\circ R(t,s(t))= T(t+\frac{Lr}{k},s(t))=T(t+\frac{Lr}{k},s(t+\frac{Lr}{k}))=\\
=(S(t+\frac{Lr}{k}),s(S(t+\frac{Lr}{k})))\,,
\end{gather*}
\begin{gather*}
R\circ T(t,s(t))=R(S(t),s(S(t)))=(S(t)+\frac{Lr}{k},s(S(t)))\,.
\end{gather*}
As a result, we obtain $\tilde{S}(t+\frac{Lr}{k})=\tilde{S}(t)+\frac{Lr}{k}$, and together with the fact that $\tilde{S}^k(t)=t+Lr$, using Lemma \ref{lem impliesIsAShift}, we get that $\tilde{S}(t)=t+\frac{Lr}{k}$.
Consequently, the bounce points of the $k$-periodic Minkowski billiard orbit starting at $\gamma(t)$ are $\gamma(t)$,$\gamma(t+\frac{Lr}{k})$,...,$\gamma(t+(k-1)\frac{Lr}{k})$ which are just $\gamma(t)$,$B\gamma(t)$,...,$B^{k-1}\gamma(t)$.
\end{proof}
Consider the action functional on this invariant curve:
\[F(t)=\sum\limits_{i=1}^k g_K(B^i\gamma(t)-B^{i-1}\gamma(t))\,.\]
Since $\alpha$ is an invariant circle, by Mather theory, the function $F$ must be constant (see, e.g., \cite[Theorem 35.2]{gole2001symplectic}).
Using the fact that $g_K\circ B = g_K$, we see that the function $g_K(B\gamma(t)-\gamma(t))$ is therefore also constant.
\begin{lemma}\label{lem equivToMoreSymmetry}
Let $K$ be a strictly convex body with $C^2$-smooth boundary invariant under a linear map $A$ of order $k$, and $B=A^m$ with $m$ coprime to $k$.
Then $g_K(B\gamma(t)-\gamma(t))$ is constant if and only if $K$ is invariant under a linear map of order $ak$ that commutes with $A$, where 
\[\begin{cases}
a = 1\,, \textrm{if } k\equiv 2 \pmod{4} \,,\\
a = 2\,, \textrm{if } k\equiv 0 \pmod{4} \,,\\
a = 4\,, \textrm{if } k\equiv 1 \pmod{2} \,.\\
\end{cases}\]
\end{lemma}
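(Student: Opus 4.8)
The plan is to pass to linear coordinates in which $A$, and hence $B=A^m$, are honest rotations. Since $A$ has order $k\ge 3$ it is conjugate to a rotation, and in a basis realizing this every power of $A$ is a rotation as well; so I may assume $B=R_\theta$ with $\theta=\frac{2\pi q}{k}$ for some $q$ coprime to $k$, while $K$ becomes a convex body invariant under $R_\theta$. The computation then rests on the elementary identity $R_\theta-I=2\sin\frac{\theta}{2}\,R_{\phi}$ with $\phi=\frac{\theta}{2}+\frac{\pi}{2}$, which one verifies by writing $R_\theta-I=R_{\theta/2}\bigl(R_{\theta/2}-R_{-\theta/2}\bigr)$ and evaluating the middle factor as $2\sin\frac{\theta}{2}\,R_{\pi/2}$. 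Applying this to $\gamma(t)$ and using positive homogeneity of the gauge gives $g_K\bigl(B\gamma(t)-\gamma(t)\bigr)=2\sin\frac{\theta}{2}\,g_K\bigl(R_\phi\gamma(t)\bigr)$, where $\sin\frac{\theta}{2}>0$ because $0<\theta<2\pi$. Hence the stated condition is equivalent to $g_K(R_\phi\gamma(t))$ being constant in $t$.

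Next I would show that $g_K(R_\phi\gamma(t))$ is constant if and only if $R_\phi K=K$. The ``if'' direction is immediate, since then $R_\phi\gamma(t)\in\partial K$ and $g_K\equiv 1$ there. For the converse, if $g_K(R_\phi\gamma(t))\equiv c$ then $R_\phi(\partial K)$ lies in the level set $\{g_K=c\}=\partial(cK)$; as $R_\phi(\partial K)=\partial(R_\phi K)$ is itself a closed convex curve contained in the closed convex curve $\partial(cK)$, the two curves must coincide, so $R_\phi K=cK$. Comparing areas and using $\det R_\phi=1$ forces $c^2=1$, hence $c=1$ and $R_\phi K=K$. Thus the gauge condition is equivalent to the invariance of $K$ under the single rotation $R_\phi$.

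It remains to identify the order of the symmetry so produced. Since $K$ is already invariant under $R_\theta$, the condition is equivalent to invariance under the cyclic group $\langle R_\theta,R_\phi\rangle\subset \mathrm{SO}(2)$. Writing the two angles as the fractions $\frac{4q}{4k}$ and $\frac{2q+k}{4k}$ of a full turn, this group has order $\frac{4k}{\gcd(4q,\,2q+k,\,4k)}=\frac{4k}{\gcd(4,\,2q+k)}$, using $\gcd(q,k)=1$. A short case analysis on $k\bmod 4$ (noting that $q$ is forced to be odd whenever $k$ is even) evaluates $\gcd(4,2q+k)$ to be $1$, $4$, $2$ according as $k$ is odd, $k\equiv 2$, or $k\equiv 0\pmod 4$, giving group order exactly $ak$ with $a$ as in the statement. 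A generator of this cyclic group is a rotation of order $ak$, and being a rotation it commutes with $A$; conjugating back to the original coordinates yields the desired order-$ak$ linear map commuting with $A$ and preserving $K$. For the converse direction, suppose $K$ is invariant under a linear map $C$ of order $ak$ commuting with $A$: in the rotation coordinates the centralizer of the order-$(\ge3)$ rotation $A$ consists of the matrices $xI+yJ$ with $J=R_{\pi/2}$, whose only finite-order elements are rotations, so $C=R_\chi$ is a rotation of order $ak$. Since $\langle R_\theta,R_\phi\rangle$ has order $ak$, the element $R_\phi$ has order dividing $ak$ and therefore lies in the unique cyclic subgroup of $\mathrm{SO}(2)$ of order $ak$, namely $\langle C\rangle$; hence $K$ is invariant under $R_\phi$ and the gauge condition follows. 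I expect the main obstacle to be the bookkeeping in the mod-$4$ case analysis, together with the subtle point of pinning the proportionality constant $c$ to $1$ via the area comparison; the structural inputs (the factorization of $R_\theta-I$ and the description of the centralizer) are elementary but must be combined carefully to secure the equivalence in both directions.
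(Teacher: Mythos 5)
Your proof is correct, and in its central analytic step it takes a genuinely different (and arguably cleaner) route than the paper. Both arguments hinge on the same factorization $B-I=2\sin\tfrac{\theta}{2}\,R_{\frac{\theta}{2}+\frac{\pi}{2}}$ and end with the same $\gcd(4,2q+k)$ case analysis, but the middle parts diverge. The paper first recasts constancy of $g_K(B\gamma(t)-\gamma(t))$ as an identity $B\gamma(t)-\gamma(t)=\lambda\gamma(H(t))$, differentiates in the outer-normal parametrization to pin down $H(\psi)=\psi+\frac{\pi\ell}{k}+\frac{\pi}{2}$, and then runs a Fourier-coefficient computation (invoking Lemma \ref{lem VanishingOfFourier}) to identify $\lambda=2\sin\frac{\pi\ell}{k}$ and conclude invariance under $R_{\frac{\pi}{2}+\frac{\pi\ell}{k}}$. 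You instead pull the scalar out immediately by positive homogeneity of the gauge, $g_K\bigl((B-I)\gamma(t)\bigr)=2\sin\tfrac{\theta}{2}\,g_K(R_\phi\gamma(t))$, reduce to constancy of $g_K(R_\phi\gamma(t))$, and settle the equivalence with $R_\phi K=K$ by the Jordan-curve containment $\partial(R_\phi K)\subseteq\partial(cK)$ plus an area comparison forcing $c=1$. This buys you independence from any choice of parametrization and removes the Fourier/Sturm--Hurwitz input entirely from this lemma; the paper's computation, on the other hand, produces the explicit value of $\lambda$ and the explicit reparametrization $H$, which fit the style of the surrounding sections. Your converse is also slightly slicker: where the paper redoes a case analysis to exhibit $R_{\frac{\pi(m-1)}{k}}$-invariance, you use the centralizer description $\{xI+yJ\}$ to force the order-$ak$ map to be a rotation and then invoke uniqueness of the cyclic subgroup of order $ak$ in $\mathrm{SO}(2)$ to place $R_\phi$ inside $\langle C\rangle$. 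All the individual steps (surjectivity of a continuous injection of $S^1$ into $S^1$, the identity $\gcd(4q,2q+k,4k)=\gcd(4,2q+k)$ via $\gcd(q,k)=1$, and the parity of $q$ when $k$ is even) check out.
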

\begin{proof}
The fact that $g_K(B\gamma(t)-\gamma(t))$ is constant is equivalent to the following: there exists a constant $\lambda>0$ and a function $H(t)$ such that for all $t$ (see Figure \ref{fig MinkowskiGKCondition}):
\begin{equation}\label{eq gKConstCond}
B\gamma(t)-\gamma(t)=\lambda\gamma(H(t))\,.
\end{equation}
\begin{figure}
\begin{center}
\begin{tikzpicture}[scale = 2]
\tikzset{
arr/.style={postaction=decorate,
decoration={markings,
mark=at position .9 with {\arrow[thick]{#1}}
}}}
  \draw[domain = -180:180, smooth, variable = \t, black] plot({cos(\t)},{0.3*cos(\t)+sin(\t)});
    \draw[domain = -180:180, smooth, variable = \t, olive,dashed] plot({1.4148*cos(\t)},{1.4148*(0.3*cos(\t)+sin(\t))});
    \tkzDefPoint(0,0){O};
    \tkzDefPoint(0.70710,-0.49497){A};
\tkzDefPoint(0.70710,0.91923){B};
\tkzDefPoint(0,1.4148){C};
\tkzDefPoint(0,1){D};
\tkzDrawPoints(O,A,B,C,D);
\tkzDrawSegment[arr=stealth,blue](A,B);
\tkzDrawSegment[arr=stealth,red](O,C);
\node[left] at (0.7,-0.5) {$\gamma(t)$};
\node[left] at (0.7,.9) {$B\gamma(t)$};
\node[left] at (0,0.95) {$\gamma(H(t))$};
\node[above] at(0,1.42) {$B\gamma(t)-\gamma(t)=\lambda \gamma(H(t))$};
\node[below] at (0,0) {$0$};
\end{tikzpicture}
\caption{The condition $g_K(B\gamma(t)-\gamma(t))=\textrm{const}$.}\label{fig MinkowskiGKCondition}
\end{center}
\end{figure}
Since $A$ is a linear map of order $k$, it is conjugated to a rotation that generates the group of rotations by $\frac{2\pi}{k}$. 
Thus, we can change basis to assume that $A$ is a rotation, and then $B$ would be a rotation by angle $\frac{2\pi\l}{k}$, with $\l$ coprime to $k$, $R_{\frac{2\pi\l}{k}}$.
We reparametrize $\gamma$ by $\psi$, the angle that the outer normal makes with the $x$ axis of this basis.
In this parametrization, we have $B\gamma(\psi)=\gamma(\psi+\frac{2\pi\l}{k})$.
Differentiate equation \eqref{eq gKConstCond} with respect to $\psi$:
\begin{equation}\label{eq MinkPreDirComparing}
(B-I)\dot{\gamma}(\psi)=\lambda H'(\psi)\dot{\gamma}(H(\psi))\,.
\end{equation}
It can be checked that if $B=R_{\frac{2\pi\l}{k}}$ then $B-I=2\sin\frac{2\pi\l}{k}\ R_{\frac{\pi}{2}+\frac{\pi\l}{k}}$.
As a result, the curve $(B-I)\gamma(\psi)$ has the same orientation as $\gamma$.
Hence, by equation \eqref{eq gKConstCond}, $\gamma(H(\psi))$ also has the same orientation as $\gamma$, so $H$ is increasing.
Thus, if we compare the directions of the vectors in both sides of equation \eqref{eq MinkPreDirComparing}, we see that:
\[(\psi+\frac{\pi}{2})+(\frac{\pi}{2}+\frac{\pi\l}{k})=H(\psi)+\frac{\pi}{2}\,,\]
from which it follows that:
 \[H(\psi)=\psi+\frac{\pi\l}{k}+\frac{\pi}{2}\,.\]
As a result, we get the equality
\begin{equation}\label{eq MinkowskiCondition}
(B-I)\gamma(\psi)=\lambda\gamma(\psi+\frac{\pi\l}{k}+\frac{\pi}{2})\,.
\end{equation}

So we see that the condition that $g_K(B\gamma(t)-\gamma(t))$ is constant is equivalent to equation \eqref{eq MinkowskiCondition}. 
We now identify the constant $\lambda$.
We view $\gamma$ as a complex valued function, and respectively identify $B$ with $e^{i\frac{2\pi\l}{k}}$.
 Write the Fourier expansion of $\gamma$: $\gamma(\psi)=\sum\limits_{n\in\Z}c_n e^{in\psi}$.
Substitute it in equation \eqref{eq MinkowskiCondition}, and compare the corresponding Fourier coefficients, to get that for all $n\in\Z$, we have:
\[c_n (e^{i\frac{2\pi\l}{k}}-1) = \lambda c_n e^{in(\frac{\pi\l}{k}+\frac{\pi}{2})}\,.\]
One can easily see from this equation that $c_{-1}$ must vanish. Hence,
by Lemma \ref{lem VanishingOfFourier},  $c_1\neq 0$. 
Therefore, we have:
\[e^{i\frac{2\pi\l}{k}}-1=\lambda ie^{i\frac{\pi\l}{k}}\,.\]
From which we get that $\lambda=2\sin\frac{\pi\l}{k}$.
However, since $$B-I=2\sin\frac{\pi\l}{k}\ R_{\frac{\pi}{2}+\frac{\pi\l}{k}}=\lambda R_{\frac{\pi}{2}+\frac{\pi\l}{k}},$$ equation \eqref{eq MinkowskiCondition} implies that $\gamma$ is invariant under $R_{\frac{\pi}{2}+\frac{\pi\l}{k}}$.
We conclude that this condition is equivalent to the fact that $g_K(B\gamma(t)-\gamma(t))$ is constant. 

Recall, that by assumption, $\gamma$ is invariant under $R_{\frac{2\pi}{k}}$.
As a result, $\gamma$ is invariant under rotations of angles 
\[\frac{2\pi}{k}p+(\frac{\pi}{2}+\frac{\pi\l}{k})q=\frac{\pi}{2k}(4p+(k+2\l)q)\,,\]
 for all $p,q\in\Z$.
 This means that $\gamma$ is also invariant under a rotation by $\frac{\pi}{2k}\gcd(4,k+2\l)$.
 If $k$ is odd, then $k+2\l$ is odd, so $\gcd(4,k+2\l)=1$, so $\gamma$ is invariant under $R_{\frac{\pi}{2k}}$ which is an element of order $4k$ in $\textrm{GL}(2,\R)$.
 If $k$ is even, then $\l$ must be odd, since $k,l$ are coprime, and hence $2\l$ has remainder $2$ modulo $4$. 
If $k\equiv 0(\textrm{mod }4)$ then $\gcd(4,k+2\l)=2$ so $\gamma$ is invariant under $R_{\frac{\pi}{k}}$, which is of order $2k$.
 Finally, if $k\equiv 2(\textrm{mod }4)$ then $\gcd(4,k+2\l)=4$, so $\gamma$ is invariant under $R_{\frac{2\pi}{k}}$, which is of order $k$.
In any case we see that $\gamma$ is invariant under an element of order $ak$, where $a$ is as in the formulation of the theorem. 
Since any two rotations (with a common center) in $\R^2$ commute, this map of order $ak$ commutes with $A$.

Conversely, suppose that $\gamma$ is invariant under a linear map of order $ak$.
 Choosing a suitable basis, we may assume that this linear map is $R_{\frac{2\pi \l}{ak}}$, with $\l$ being coprime to $ak$.
This means that $\gamma$ is also invariant under $R_{\frac{2\pi}{ak}}$, and therefore, it is also invariant under $R_{\frac{2\pi}{k}}$ and $R_{\frac{\pi}{k}+\frac{\pi}{2}}$.
In this basis, we choose $A=R_{\frac{2\pi}{k}}$, and $B=R_{\frac{2\pi m}{k}}=A^m$ with $m$ coprime with $k$.
The curve $\gamma$ is invariant under $A$ and under $R_{\frac{\pi}{2}+\frac{\pi}{k}}$.
As we computed before, we have $B-I=2\sin\frac{\pi m}{k}R_{\frac{\pi}{2}+\frac{\pi m}{k}}$.
In order to prove that $g_K(B\gamma(t)-\gamma(t))$ is constant,  we need to show, by equation \eqref{eq MinkowskiCondition}, that $\gamma$ is invariant under $R_{\frac{\pi}{2}+\frac{\pi m}{k}}$.
Indeed, $\gamma$ is invariant under $R_{\frac{\pi}{2}+\frac{\pi}{k}}$, so we only need to show that it is also invariant under $R_{\frac{\pi (m-1)}{k}}$.
If $k\not\equiv 2\pmod{4}$ this is apparent since in this case $\gamma$ is invariant under $R_{\frac{\pi}{k}}$. 
If $k\equiv 2\pmod{4}$, then $m$ must be odd, since $k$ is even. 
Consequently, $\frac{\pi(m-1)}{k}$ is an integer multiple of $\frac{2\pi}{k}$, and $\gamma$ is invariant under $R_{\frac{2\pi}{k}}$, and in this case again $\gamma$ is invariant under $R_{\frac{\pi(m-1)}{k}}$.
This completes the proof of the lemma.
\end{proof}
To finish the proof of Theorem \ref{theorem Minkowski}, we will prove that this condition is sufficient.
 Suppose, that $\gamma$ is a curve which is invariant under a linear map $A$ of order $k$, and for which $g_K(B\gamma(t)-\gamma(t))$ is constant, where $B=A^m$ and $m$ is coprime with $k$.
 We need to prove that the Minkowski billiard map in $K$ has rotational invariant curves of $k$-periodic orbits of all rotation numbers $\frac{r}{k}$, where $r$ is coprime with $k$. 
Indeed, since $\gamma$ is invariant under $A$, for all $t$, $B\gamma(t)=A^m\gamma(t)$ is also a point on $\gamma$. 
Thus, we need to show that the points $\gamma(t)$,$B\gamma(t)$,...,$B^{k-1}\gamma(t)$ form a Minkowski billiard orbit. 
It is enough to verify, that for all $t$, $\tau=t$ is a critical point of the function 
\[f(\tau)=g_K(B\gamma(t)-\gamma(\tau))+g_K(\gamma(\tau)-B\inv\gamma(t))\,,\]
where $\gamma$ is parametrized such that $g_K(\dot{\gamma}(t))=1$.
We compute $f'(t)$:
\[f'(t)=d(g_K)_{B\gamma(t)-\gamma(t)}(-\dot{\gamma}(t))+d(g_K)_{\gamma(t)-B\inv\gamma(t)}(\dot{\gamma}(t))\,.\]
Using the fact that $d(g_K)_{Bx}\circ B = d(g_K)_x$,  the second summand is equal to $d(g_K)_{B\gamma(t)-\gamma(t)}(B\dot{\gamma}(t))$.
As a result, we obtain
\[f'(t)=d(g_K)_{B\gamma(t)-\gamma(t)}(B\dot{\gamma}(t)-\dot{\gamma}(t))=0\,,\]
since this is just the derivative of the constant function $g_K(B\gamma(t)-\gamma(t))$.
Now we explain why the rotation number of this orbit can be any number of the form $\frac{r}{k}$ with $r$ coprime to $k$.
In a suitable basis, $A=R_{\frac{2\pi\l}{k}}$, with $\l$ coprime to $k$. 
The number $m$ is also coprime to $k$, so $B=R_{\frac{2\pi\l'}{k}}$, with $\l'$ coprime to $k$.
We can choose $m$ to have $\l'=r$.
If we parametrize $\gamma$ by the angle $\psi$ of the outer normal, then the points of the $k$-periodic orbits will be $\gamma(\psi)$,$\gamma(\psi+\frac{2\pi r}{k})$,...,$\gamma(\psi+(k-1)\frac{2\pi r}{k})$, and this orbit has rotation number $\frac{r}{k}$.
\section{Open Questions}\label{section followUp}

To conclude, we ask several questions that arise naturally:
\begin{enumerate}

\item Can one remove symmetry assumption in Theorem \ref{thm BirkhoffForMinkowski}?
Saying differently, does total integrability of Minkowski billiard in $K$ with the norm determined by $K$ implies that $K$ is an ellipse? Proof of this fact may require new integral-geometric tools for Minkowski billiards.

\item Minkowski billiards in a $k$-symmetric domain $K$ with the usual $L^2$ norm correspond to the Birkhoff billiard system, for which we proved rigidity.
But as we showed, there are many examples of $k$-symmetric convex domains $K$ for which the Minkowski billiard map in $K$ with the norm induced by $K$ admits an invariant curve of $k$ periodic orbits.
Are there other norms which provide examples to each of these types of behaviors?

\item Does Theorem \ref{theorem euclideanK} for Birkhoff billiards generalize to higher dimensions?
 In this case, the billiard ball map is not a a self map of a cylinder, but of a ball bundle over a sphere.
\item Can our results be generalized for Birkhoff and Outer billiard systems in constant curvature surfaces? To answer this question one needs to understand  the analogs of the equation \eqref{gutkin} used in the proof of Theorems \ref{theorem euclideanK},\ref{theorem outer} in the case of constant curvature surfaces.

\item What can be said about other dynamical systems, like the Wire billiards, introduced in \cite{bialy2019wire}?
This is also a twist maps of a cylinder, so one can expect that the ideas used in the proof of Theorems 1-4 might work for this system as well.
\end{enumerate}
\bibliography{rotationBib}
\bibliographystyle{abbrv}
\end{document}